\newtheorem{theorem}{Theorem}[section]
\newtheorem{lemma}[theorem]{Lemma}
\newtheorem{proposition}[theorem]{Proposition}
\newtheorem{corollary}[theorem]{Corollary}
\theoremstyle{definition}
\newtheorem{definition}[theorem]{Definition}
\newtheorem{example}[theorem]{Example}
\newtheorem{remark}[theorem]{Remark}
\definecolor{cKlaus}{rgb}{0.1,0.55,0.03}
\definecolor{cKlausOK}{rgb}{0.6,0.10,0.33}
\definecolor{intOrange}{rgb}{1.0,.310,.0}
\newcommand{\ktrash}[1]{}
\newcommand{\cN}{\mathcal{N}}
\newcommand{\cP}{\mathcal{P}}
\newcommand{\cH}{\mathcal{H}}
\DeclareMathOperator{\rank}{rk}
\DeclareMathOperator{\coker}{coker}
\DeclareMathOperator{\orb}{orb}
\DeclareMathOperator{\Nef}{Nef}
\DeclareMathOperator{\Eff}{Eff}
\DeclareMathOperator{\face}{face}
\DeclareMathOperator{\conv}{conv}
\newcommand{\PP}{\mathbb P}
\newcommand{\R}{\mathbb R}
\newcommand{\T}{\mathbb T}
\newcommand{\V}{\mathbb V}
\newcommand{\Z}{\mathbb Z}
\newcommand{\CC}{{\mathcal C}}
\newcommand{\CD}{{\mathcal D}}
\newcommand{\CE}{{\mathcal E}}
\newcommand{\CF}{{\mathcal F}}
\newcommand{\CH}{{\mathcal H}}
\newcommand{\CI}{{\mathcal I}}
\newcommand{\CL}{{\mathcal L}}
\newcommand{\CO}{{\mathcal O}}
\newcommand{\CP}{{\mathcal P}}
\newcommand{\CR}{{\mathcal R}}
\newcommand{\CS}{{\mathcal S}}
\newcommand{\CT}{{\mathcal T}}
\newcommand{\entspr}{\mathop{\widehat{=}}}  %
\newcommand{\toric}{\T\V}  %
\newcommand{\gH}{\operatorname{H}}
\newcommand{\Pic}{\operatorname{Pic}}
\newcommand{\Div}{\operatorname{Div}}
\newcommand{\Cl}{\operatorname{Cl}}
\definecolor{intOrange}{rgb}{1.0,.310,.0} 
\newcommand{\til}[1]{\widetilde{#1}}
\newcommand{\ifff}{\,\Longleftrightarrow\,}
\newcommand{\gExt}{\operatorname{Ext}}
\newcommand{\RHom}{\operatorname{\R Hom}}
\renewcommand{\div}{\operatorname{div}}
\newcommand{\kst }{\,|\;}
\newcommand{\ku}{\underline}
\newcommand{\surj}{\rightarrow\hspace{-0.8em}\rightarrow}
\newcommand{\kss}{\scriptscriptstyle}
\newcommand{\kbb}{{\kss \bullet}}
\newcommand{\ko}{\overline}
\newcommand{\kk}{K}
\newcommand{\normal}{\cN}
\newcommand{\vect}{\overrightarrow}
\newcommand{\Matrr}[3]{\left(\begin{array}{@{}*{#1}{r}|*{#2}{r}@{}} #3
\end{array}\right)}
\newcommand{\cc}{{c^{\ell_2}}}
\newcommand{\cb}{{\ko{c}}}
\newcommand{\eN}[1]{e^{#1}}
\newcommand{\fN}[1]{f^{#1}}
\newcommand{\uN}[1]{u^{#1}}
\newcommand{\vN}[1]{v^{#1}}
\newcommand{\eM}[1]{e_{#1}}
\newcommand{\fM}[1]{f_{#1}}
\newcommand{\nImm}{\operatorname{\CI}}
\newcommand{\xa}{\alpha} %
\newcommand{\xb}{\beta} %
\newcommand{\xc}{{\gamma}} %
\newcommand{\gammaText}[1]{{#1}}
\newcommand{\noGammaText}[1]{}
\newcommand{\cl}{s} %
\newcommand{\clDown}{s_{\downarrow}} %
\newcommand{\clUp}{s^{\uparrow}} %
\newcommand{\DeltaUp}{\Delta^{\mbox{\tiny up}}}
\newcommand{\Rect}{\CR} %
\newcommand{\kU}{U} %
\newcommand{\kV}{V} %
\newcommand{\hZ}{d_{\mbox{\rm\tiny int}}} %
\newcommand{\tm}{maximal}
\newcommand{\nex}{non-extendable}
\newcommand{\leftR}{V^{\mbox{\rm\tiny left}}}
\newcommand{\rightR}{V^{\mbox{\rm\tiny right}}}
\newcommand{\upperR}{H^{\mbox{\rm\tiny up}}}
\newcommand{\lowerR}{H^{\mbox{\rm\tiny down}}}
\newcommand{\nl}{{l}}
\newcommand{\mes}{maximal exceptional sequence}
\newif\ifcomment
\newcommand{\bbox}{\ko{\cP}_V}
\newcommand{\helix}{\hslash} %
\newcommand{\heLex}{\helix_{\mbox{\tiny lex}}}
\newcommand{\wbox}{ensemble}
\renewcommand{\sharp}{\#}
\newcommand{\ssect}[1]{Subsection~\eqref{#1}}
\newcommand{\ssects}[1]{Subsections~\eqref{#1}}
\newcommand{\thLex}{A}
\newcommand{\thHeight}{B}
\newcommand{\thHelex}{C}
\newcommand{\thClass}{D}
\newcommand{\thFull}{E}
\definecolor{originM}{RGB}{180,0,0}
\definecolor{origin}{RGB}{0,130,0}
\definecolor{regi}{HTML}{677081}
\definecolor{regii}{RGB}{0,0,160}
\definecolor{regiii}{RGB}{0,0,160}
\definecolor{redSeg}{RGB}{160,0,0}
\definecolor{center}{RGB}{180,120,60}
\definecolor{parallelogram}{RGB}{135,206,250}
\definecolor{coverA}{RGB}{180,0,0}
\definecolor{coverB}{RGB}{10,180,0}
\definecolor{coverC}{RGB}{10,0,180}
\definecolor{coverD}{RGB}{80,20,60}
\definecolor{skin}{HTML}{FFECC9}
\definecolor{pumpkin}{HTML}{FEDFA9}
\definecolor{piggy}{HTML}{FFB99D}
\definecolor{fiolet}{HTML}{CD8F9C}
\definecolor{granat}{HTML}{677081}
\definecolor{ciemnyblekit}{HTML}{91A1B8}
\definecolor{oliwkowy}{HTML}{627037}
\definecolor{ciemnazielen}{HTML}{394D2E}
\definecolor{ciemnyfiolet}{HTML}{424444}
\definecolor{mocnyfiolet}{HTML}{717299}
\definecolor{jasnyfiolet}{HTML}{B0ABCC}
\definecolor{bladyfiolet}{HTML}{C9C7DB}
\definecolor{lightblue}{RGB}{135,206,250}
\definecolor{darkblue}{RGB}{0,0,160}
\definecolor{darkgreen}{RGB}{0,160,0}
\definecolor{veryPeri}{RGB}{102,103,171}
\newcommand{\immreg}[4]{
\def\la{#1} %
\def\lb{#2} %
\def\ch{#3} %
\def\cb{#4} %
\draw[color=oliwkowy!40] (-\cb-0.3,-\la/\ch) grid (\la+0.3,\lb+\la/\ch);
\fill[pattern color=parallelogram, pattern=north west lines]
  (0,0) -- (\la,-\la/\ch) -- (\la,0) -- cycle
  (\la-\cb,\lb) -- (-\cb,\lb+\la/\ch) -- (-\cb,\lb) -- cycle;
\draw[thick, color=black]
  (0,0) -- (\la,-\la/\ch) -- (\la,0)
  (\la-\cb,\lb) -- (-\cb,\lb+\la/\ch) -- (-\cb,\lb);
\fill[pattern color=parallelogram, pattern=north west lines]
  (0,0) -- (-\cb,\cb/\ch) -- (-\cb,\lb) --
  (\la-\cb,\lb) -- (\la,\lb-\cb/\ch) -- (\la,0) -- cycle;
\fill[thick, color=origin]
  (0,0) circle (5pt);
\draw[thick, color=origin]
  (\la-\cb,\lb) circle (5pt)
  (-0.5,-0.5) node{$0$};
\foreach \xa in {-1,...,1}
\foreach \xb in {-\cb,...,\la}
\foreach \y in {2,...,\lb} {
  \fill[thick, color=regi]
    (\xa-\cb,\y-1) circle (5pt)
    (\xb+1,\y-1) circle (5pt); }
\pgfmathsetmacro{\hMax}{(\la-2)/\ch}
\foreach \y in {0,...,\hMax} {
  \pgfmathsetmacro{\wMax}{\la-\ch*\y-1}
  \foreach \x in {1,...,\wMax} {
    \fill[thick, color=regii]
      (\la-\x,-\y) circle (5pt)
      (\x-\cb,\y+\lb) circle (5pt); }}
\foreach \x in {2,...,\la} {
  \fill[thick, color=regii]
  (\x-1,0) circle (5pt)
  (\la-\cb-\x+1,\lb) circle (5pt); }
}
\begin{document}
\parindent0mm

\title[Maximal exceptional sequences]
{The structure of exceptional sequences on toric varieties
of Picard rank two}

\author[K.~Altmann]{Klaus Altmann%
}
\address{Institut f\"ur Mathematik,
FU Berlin,
K\"onigin-Luise-Str.~24-26,
D-14195 Berlin
}
\email{altmann@math.fu-berlin.de}
\author[F.~Witt]{Frederik Witt}
\address{
Fachbereich Mathematik,
U Stuttgart,
Pfaffenwaldring 57,
D-70569 Stuttgart
}
\email{witt@mathematik.uni-stuttgart.de}
\thanks{MSC 2020: 14F08, %
14M25, %
52C05; %
Key words: toric variety, derived category, 
exceptional sequence}

\begin{abstract}
For a smooth projective toric variety of 
Picard rank two we classify all exceptional 
sequences of invertible sheaves which 
have maximal length. In particular, 
we prove that unlike non-maximal 
sequences, they (a) remain exceptional 
under lexicographical reordering 
(b) satisfy strong spatial constraints 
in the Picard lattice (c) are full, 
that is, they generate the derived 
category of the variety.
\end{abstract}

\maketitle
\setcounter{tocdepth}{1}

\section{Introduction}
\label{sec:Intro}

\subsection{Fullness of exceptional sequences}
\label{subsubsec:ExExSeq}
Let $X$ be a smooth projective variety over 
{an algebraically closed field~$\kk$} 
and $\CD(X)$ the bounded category of coherent sheaves 
on $X$. In his ICM talk~\cite{KuznetsovICM}, 
Kuznetsov posed the following

\medskip

{\bf Fullness Conjecture.} 
If $\CD(X)$ is generated by an exceptional sequence, 
then any exceptional sequence of the same length is full.

\medskip

{Though a counterexample to Kuznetsov's conjecture 
was recently given on a rational surface {~\cite{krah}}, 
this question is still of interest. {For instance, the 
fullness property for $X$} implies the absence of so-called phantom 
categories appearing in~\cite{phantomsGalkin1} and~\cite{Phantom}.}

\medskip

{In this paper we shall address the question of 
fullness in the case of exceptional sequences of 
line bundles on a toric 
variety of Picard rank $2$ defined over an 
algebraically closed field $\kk$.}
Let us first explain the general setting before we
comment on our assumptions. A sequence of elements 
$\CE_1,\ldots,\CE_N$ in $\CD(X)$ is called 
{\em exceptional} {{if} the derived homomorphisms 
satisfy 
$$
\RHom(\CE_i,\CE_i)=\kk\hspace{0.8em}\mbox{and}\hspace{0.8em}
\RHom(\CE_j,\CE_i)=0\;\mbox{for all }j>i.
$$
}
For instance, if the $\CE_i$ are 
given by invertible sheaves and $X$ 
satisfies {$\R\Gamma(X,\CO_X)=\kk$}, the first 
condition holds automatically. Further, the 
second condition becomes $\gExt^\kbb(\CE_j,\CE_i)=0$.

\medskip

An exceptional sequence is {\em full} if it 
generates the derived category. This means that 
$\langle\CE_1,\ldots,\CE_N\rangle$, the smallest 
triangulated full subcategory of $\CD(X)$ 
containing the $\CE_i$, is $\CD(X)$ itself. 
The length $N$ equals the rank of 
the $\operatorname{K}$-group $\operatorname{K}_0(X)$. Any other exceptional
sequence has at most $N$ elements; it is called
{\em maximal} if it attains this bound. 
In particular, full sequences are maximal.

\medskip

The simplest example is Beilinson's
full exceptional sequence on $\PP^{d}$~\cite{beilinson},
namely
$$
\CD(\PP^{d})=\langle\CO_{\PP^n}(0),\ldots,
\CO_{\PP^{d}}({d})\rangle.
$$
More generally, Kawamata proved existence of 
full exceptional sequences consisting of complexes 
of coherent sheaves on any smooth 
projective toric variety $X$ (or even a smooth 
toric DM-stack for that matter)
~\cite{kaw1,kaw2,kaw3}. Note that,
{for $\dim X = d$, }the 
rank of $\operatorname{K}_0(X)$ equals the number of 
$d$-dimensional cones $\sharp\Sigma(d)$ of the 
underlying fan $\Sigma$. 
Equivalently, this coincides with the number of 
vertices of any polytope associated with an 
ample divisor on $X$. 

\medskip

In view of Beilinson's example one could even 
hope of finding full exceptional sequences 
consisting of invertible sheaves 
$\CL_i$ instead of general complexes $\CE_i$, 
but this fails even for toric Fano 
varieties~\cite{efimov}. However, 
existence of such sequences was established 
in~\cite{CostaMiroRoig} if $\Sigma$ is a 
splitting fan. Geometrically, this means that the 
toric variety $X=\toric(\Sigma)$ arises as the 
total space of a sequence of successive fibrations 
via $X_0=\PP^n,X_1,\ldots,X_r=X$ with 
$X_i=\PP(E_{i-1})$ for a sum of invertible sheaves 
$E_{i-1}$ on $X_{i-1}$. 

\medskip

\subsection{Exceptional sequences of line bundles on 
toric varieties of Picard rank two}
\label{subsec:CoSmToVaPic2}
From now on we solely consider exceptional sequences
of line bundles on smooth projective toric varieties 
of Picard rank $2$, the easiest examples { after 
$\PP^{d}$ among toric varieties $X=\toric(\Sigma)$ 
with splitting fan $\Sigma$.} 
The basic invariant of $X$ is 
\begin{center}
the pair $(\ell_1,\ell_2)$ of integers 
$\ell_1$, $\ell_2\geq2$  
\end{center}
which indicates that $X$ fibres over $\PP^{\ell_1-1}$ 
with fibre $\PP^{\ell_2-1}$. {Moreover,
\[
d=\dim X=\ell_1+\ell_2-2,  
\]
and} the defining fan $\Sigma$ 
contains exactly two rays more than $d$; see 
\ssect{pic2:KleinSchmidt} for further details. 
In particular, $\sharp\Sigma(d)=\ell_1\ell_2$.
We refer to the trivial fibration 
$\PP^{\ell_1-1}\times\PP^{\ell_2-1}$ as the 
{\em product case} and to a nontrivial fibration as 
the {\em twisted case}. For the latter, the order 
of the two numbers $\ell_1,\ell_2\geq 2$ really 
matters. In dimension two where $\ell_1=\ell_2=2$, 
we just find the family of Hirzebruch surfaces
but the complexity quickly increases with dimension. 
{The fibration structure also implies that 
we have a canonical identification 
$$
\Pic(X)\cong\Z^2 
$$
given by the primitive generators of the nef cone, 
see Subsection~\eqref{pic2:nef}. Geometrically, 
these generators are given by the pullback of 
$\CO_{\PP^{\ell_1-1}}(1)$ and a relative hyperplane 
section of the fibration.}

\medskip

Since $\gExt^\kbb(\CL_j,\CL_i)=
\mathrm{H}^\kbb(X,\CL_j^{-1}\otimes\CL_i)$
a sequence of line bundles $\CL_0,\ldots,\CL_N$ 
is exceptional if and only if $\CL_j^{-1}\otimes\CL_i$, 
$i<j$, lies in the locus of cohomologically trivial 
line bundles inside the Picard group. This locus is 
explicitly known for toric varieties given by a splitting 
fan~\cite{immaculate}. Second, we also understand 
the extensions provided by nontrivial cohomology, 
cf.~\cite{dop} and~\cite{displayingExt}.

\medskip

\subsection{Maximal exceptional sequences are lexicographic}
\label{subsubsec:LexExSeq}
Since properties of exceptional sequences such 
as fullness only depend on their underlying set,
it is natural to look for a canonical order. 
{We call an exceptional sequence 
{\em vertically} respectively 
{\em horizontally orderable} if it remains 
exceptional for the lexicographic order on 
$\Pic(X)\cong\Z^2$ where priority is given 
either to the ``vertical'' or ``horizontal'' 
direction. In general, lexicographic 
reordering destroys exceptionality of the 
sequence, but remarkably, this does not 
happen for maximal exceptional sequences.}

\medskip

{\bf Theorem {\thLex} [see Theorems 
\ref{thm:MESRelProdCase} and \ref{thm:MESRel}].} 
{\em Let $\cl\subseteq\Z^2$ be a maximal 
exceptional sequence of invertible sheaves on 
a smooth projective toric variety $X$ of Picard 
rank two. {In the product case, $\cl$ 
is either vertically or horizontally orderable. 
In the twisted case, $\cl$ is vertically orderable.}}

\medskip

In contrast, it was shown in~\cite[Example 3.5]{p1p1p1} 
that there are maximal exceptional sequences on
$\PP^1\times\PP^1\times\PP^1$ {that are not 
orderable with respect to any of the six possible 
lexicographic orders on 
$\Pic(\PP^1\times\PP^1\times\PP^1)\cong\Z^3$}.

\medskip

\subsection{Maximal exceptional sequences are densely packed}
\label{subsubsec:PropExSeq}
It is well-known that the shape of exceptional 
sequences also impacts on the derived category, 
see for instance~\cite{resCatLMS},
~\cite{resCatCompositio} or~\cite{Mironov}. 
Our next structure result concerns the 
spatial size of maximal exceptional sequences. 

\medskip

{\bf Theorem {\thHeight} [see Theorems
~\ref{thm:SlimPaPb} and~\ref{cor:HeightConstraint}].} 
{\em Let $\cl\subseteq\Z^2=\Pic(X)$ be a 
maximal exceptional sequence of invertible 
sheaves on a projective toric variety $X$ 
of Picard rank two. {In the twisted case, 
the {\em height}, which is the minimal number of rows 
of a horizontal strip containing $\cl$, is bounded by 
$2\ell_2$. In the product case, either the height or 
the {\em width} (the minimal number of columns of a 
vertical strip containing $\cl$) is bounded by $2\ell_2-1$ 
or $2\ell_1-1$, respectively.}}

\medskip

Again, it is easy to construct counterexamples 
for non maximal sequences. What is striking 
about this result is that it is false for higher 
Picard rank. In~\cite[Example 3.4]{p1p1p1} 
it was shown that maximal exceptional sequences 
on $\PP^1\times\PP^1\times\PP^1$ can spread 
arbitrarily far and simultaneously in all 
three directions.

\medskip

\subsection{The classification of {\mes}s}
\label{subsubsec:HeLexing}
{Let
$$
\Rect_{\ell_1,\ell_2}:=\{(a,b)\in\Z^2\mid 
0\leq a<\ell_1,\,0\leq b<\ell_2\}
$$
be the {\em standard rectangle} associated 
with the pair $(\ell_1,\ell_2)$. 
The sequence given by
$$
\cl_{a+b\ell_1}:=(a,b)\in\Rect_{\ell_1,\ell_2}
$$
is maximal exceptional with respect to the vertical 
lexicographical order, and so is any sequence obtained 
by a global shift, or by shifting 
{each row of points $(\kbb,b)$ independently by some $(a_b,0)$.}
We refer to these \mes s as 
{\em vertically trivial}, see also 
Subsection~\eqref{subsec:TrivMes}. Similarly, 
there are also horizontally trivial sequences in the 
product case. Composing the well-known helix operator 
(e.g.~\cite{rudakov}) with lexicographical reordering 
yields the heLex operator {$\heLex$}, see 
Subsection~\eqref{subsec:HeLexPC}.}

\medskip

{\bf Theorem {\thHelex} 
[see Theorems~\ref{thm:HeLexingPC} and~\ref{thm:HeLexing}].}
{\em Any vertically orderable \mes\ can be transformed 
into a vertically trivial \mes\ by applying $\heLex$ 
at most $\ell_1\ell_2$ times. Mutatis mutandis, the 
statement holds for horizontally \mes s in the 
product case.}

\medskip

{Theorem {\thHelex} can be recast into a 
more constructive version:}

\medskip

{\bf Theorem {\thClass}
[see Theorems~\ref{thm:mesAfterAdShiftPC} 
and~\ref{thm:mesAfterAdShift}].} 
{\em Any {\mes} is determined {and explicitely described}
by a so-called 
{\em admissible set} $X\subseteq(-\xb,\ell_2)
+\Rect_{\ell_1,\ell_2}$ 
{and a complementing partner $Y$} which consist either 
of horizontal or vertical lines of consecutive 
points.}

\medskip

The precise definition of admissible set{s and complementing partners} 
is given in Definition~\ref{def:AdmSetPC} and
~\ref{subsec:mesClassification}. In this way 
we can classify the totality of {\mes}s.

\medskip

\subsection{Fullness of maximal exceptional sequences}
\label{subsubsec:FullnessExSeq}
{Finally, we show that maximality is 
sufficient for fullness. Viewing exceptionality 
as an orthogonality condition in the derived 
category, this is comparable to the fact that 
in a finitely generated vector space any linearly 
independent set of maximal cardinality generates 
the space. This follows either directly from Theorem~\thLex\ 
(admitting that the helix (!) operator preserves fullness 
from the general theory), or from a combinatorial argument 
building on Theorems~{\thLex},~{\thHeight} and~{\thClass}.} 

\medskip

{\bf Theorem {\thFull} 
[see Theorems~\ref{thm:FullnessTheoremPaPb} 
and~\ref{thm:FullnessTheorem}.} 
{\em An exceptional sequence of invertible sheaves 
on a smooth projective toric variety of Picard rank 
two is full if and only if it is maximal.}

\medskip

For toric Fano varieties of Picard rank two and 
dimension less than five this was shown in
~\cite{pic2Fano} by direct calculations in a 
case-by-case analysis {(special cases were 
already considered in~\cite{LiuYangYu})}. 
{In contrast},~\cite{p1p1p1} required aid of a computer to 
prove the same result for $\PP^1\times\PP^1\times\PP^1$, 
the easiest example of Picard rank three.
{
In \cite{BorisovWang}, the claim that maximality implies fullness
was proven (with completely different methods) 
for toric DM stacks $X$ with Picard rank two under the additionel
assumptions that $X$ is Fano and that the sequence is even 
strongly exceptional. See also~\cite{DHLKK} for a much more general approach.

\medskip

In Theorem [5.3 = B] of \cite{exsec} one can find a precise characterization 
of strongly exceptional sequences among all \mes s
in terms of the building pairs $(X,Y)$ consisting of an admissible set and a
complementing partners in the sense of
\ssect{subsec:ClassProdCase} and (\ref{subsec:mesClassification}) 
of the present paper.
}

\medskip

{On the other hand, the structural Theorems {\thLex}-{\thFull} 
go beyond this fullness issue as they provide a completely general 
and conceptional treatment of \mes s for Picard rank two
-- including a complete classification of all \mes s}. As a final 
comment we note that our arguments are purely
combinatorial (at the expense of possible shortcuts, cf.\ 
for instance Remark~\ref{rem:HelixOperator}). 
A more geometrical approach is given 
in the sequel~\cite{exsec}.

\medskip

\subsection{The example $\mathbf{\PP^1\times\PP^1}$}
\label{subsec:GenDerCat}
As an illustration of our theorems we consider 
$\PP^1\times\PP^1$ and show how we generate the 
whole lattice $\Pic(X)=\Z^2$ out of the maximal 
exceptional sequence 
$$
\cl=(\cl_0,\,\cl_1,\,\cl_2,\,\cl_3,)=
\big(0,\,(-3,1),\,(-2,1),\,(1,2)\big),
$$
see also (a) of Figure~\ref{fig:InfecEx}. It 
is vertically ordered and of height $3$ in 
accordance with Theorems {\thLex} and {\thHeight} 
(for horizontally ordered examples of unbounded 
height see~\ref{ex:P1P1}).

\medskip

The main tool is the Beilinson sequence
$$
0\to\CO_{\PP^1}\to\CO_{\PP^1}(1)\oplus\CO_{\PP^1}(1)
\to\CO_{\PP^1}(2)\to0
$$
on $\PP^1$. As we explain in Example~\ref{ex:Beilinson}
it allows us to generate or ``fill'' the whole 
(horizontal or vertical) line whenever it carries two 
consecutive points of $\langle\cl\rangle$.

\medskip

\begin{figure}[ht]
\newcommand{\spaceA}{\hspace*{1.5em}}
\newcommand{\scaleA}{0.3}
\begin{tikzpicture}[scale=\scaleA]
\draw[color=oliwkowy!40] (-3.3,-3.3) grid (3.3,3.3);
\fill[thick, color=origin]
  (0,0) circle (9pt);
\fill[thick, color=black]
  (-3,1) circle (9pt) (-2,1) circle (9pt) (1,2) circle (9pt);
\draw[thick, color=black]
   (0,-5) node{(a)};
\end{tikzpicture}
\spaceA
\begin{tikzpicture}[scale=\scaleA]
\draw[color=oliwkowy!40] (-3.3,-3.3) grid (3.3,3.3);
\draw[thick, color=red]
  (-4,1) -- (4,1);
\fill[thick, color=origin]
  (0,0) circle (9pt);
\fill[thick, color=black]
  (-3,1) circle (9pt) (-2,1) circle (9pt) (1,2) circle (9pt);
\fill[thick, color=red]
  (-3,1) circle (9pt) (-2,1) circle (9pt);
\draw[thick, color=black]
   (0,-5) node{(b)};
\end{tikzpicture}
\spaceA
\begin{tikzpicture}[scale=\scaleA]
\draw[color=oliwkowy!40] (-3.3,-3.3) grid (3.3,3.3);
\draw[thick, color=red]
  (-4,1) -- (4,1) (1,-4) -- (1,4);
\draw[thick, color=blue]
  (-4,1) -- (4,1);
\fill[thick, color=origin]
  (0,0) circle (9pt);
\fill[thick, color=black]
  (-3,1) circle (9pt) (-2,1) circle (9pt) (1,2) circle (9pt);
\fill[thick, color=red]
  (1,1) circle (9pt);
\draw[thick, color=black]
   (0,-5) node{(c)};
\end{tikzpicture}
\spaceA
\begin{tikzpicture}[scale=\scaleA]
\draw[color=oliwkowy!40] (-3.3,-3.3) grid (3.3,3.3);
\draw[thick, color=red]
  (-4,1) -- (4,1) (1,-4) -- (1,4) (0,-4) -- (0,4);
\draw[thick, color=blue]
  (-4,1) -- (4,1) (1,-4) -- (1,4);
\fill[thick, color=origin]
  (0,0) circle (9pt);
\fill[thick, color=black]
  (-3,1) circle (9pt) (-2,1) circle (9pt) (1,2) circle (9pt);
\fill[thick, color=red]
  (0,1) circle (9pt);
\draw[thick, color=black]
   (0,-5) node{(d)};
\end{tikzpicture}
\spaceA
\begin{tikzpicture}[scale=\scaleA]
\draw[color=oliwkowy!40] (-3.3,-3.3) grid (3.3,3.3);
\draw[thick, color=blue]
  (-4,1) -- (4,1) (1,-4) -- (1,4) (0,-4) -- (0,4);
\fill[thick, color=red]
  (0,3) circle (9pt) (1,3) circle (9pt)
  (0,-3) circle (9pt) (1,-3) circle (9pt);
\fill[thick, color=origin]
  (0,0) circle (9pt);
\fill[thick, color=black]
  (-3,1) circle (9pt) (-2,1) circle (9pt) (1,2) circle (9pt);
\draw[thick, color=black]
   (0,-5) node{(e)};
\draw[thick, color=red]
  (-4,3) -- (4,3) (-4,-3) -- (4,-3);
\end{tikzpicture}
\caption{Filling $\Pic(\PP^1\times\PP^1)=\Z^2$ from $\cl$. The green dot in (a) -- (e) marks the origin $0\in\Z^2$.}
\label{fig:InfecEx}
\end{figure}

\medskip

Here and in the sequel, let 
\begin{center}
{$[x=a]$ and $[y=b]$ denote the lines $\{(a,j)\mid j\in\Z\}$ 
and $\{(i,b)\mid i\in\Z\}$ in $\Z^2$.}
\end{center}
{Right from the beginning in (a), we} have a 
consecutive pair of elements in $\cl$ on $[y=1]$ 
so that we can generate or ``fill'' the entire 
line $[y=1]$, cf.\ (b). Hence $[y=1]\subseteq\langle
\cl\rangle$.

\medskip

Next, we fill the vertical line $[x=1]$ 
using the consecutive pair $(1,1)$, $(1,2)$ in 
(c) -- we showed in (b) that $(1,1)\in
\langle\cl\rangle$. Similarly, we can fill 
the line $[x=0]$ since $(0,1)\in\langle\cl\rangle$, 
cf.\ (d).

\medskip

It follows that we can fill any horizontal 
line $[y=b]$ for $(0,b)$, $(1,b)\in\langle\cl\rangle$, 
$b\in\Z$, see (e). Hence $\Pic(\PP^1\times\PP^1)
\subseteq\langle\cl\rangle$. As 
$\Pic(\PP^1\times\PP^1)$ generates 
$\CD(\PP^1\times\PP^1)$, $\cl$ is full. 

\medskip

The proof of Theorem~\thFull\ for 
$\PP^{\ell_1-1}\times\PP^{\ell_2-1}$ is a direct 
generalisation of this example. The key step 
consists in proving existence of sufficiently many 
``horizontal'' and ``vertical'' consecutive points 
inside the sequence, see Section~\ref{sec:ProdCase}. 
The twisted case works differently.
In particular, it requires a suitable ``vertical'' 
Beilinson sequence which reflects the fine structure 
of the twist, see \ssect{subsec:appPicTwo}. 

\medskip

\subsection{Plan of the paper}
\label{subsec:PlanPaper}
The first Sections~\ref{sec:TorGeo} to~\ref{sec:Relax} 
provide the 
necessary background and establish the {main} technical 
tools. 
{Though large parts of the proofs in the product 
and the twisted case are similar and equally technical}
we found it more perspicuous to 
treat them separately with the product case serving as 
guideline. {Consequently, Sections~\ref{sec:ProdCase}
and~\ref{sec:ClassPC} prove Theorems~{\thLex}-{\thFull} 
for the product case, while 
the remaining Sections~\ref{sec:MES} to~\ref{sec:GenDerCat} 
are devoted to proving the twisted versions.}

\medskip

\section{Some background on toric geometry}
\label{sec:TorGeo}
We briefly review some features of toric 
geometry which we shall use in the paper.
For a short introduction to toric geometry, 
see~\cite{fultonToric}.

\medskip

\subsection{Torus invariant divisors}
\label{subsec:TorInvDiv}
Let $X=\toric(\Sigma)$ be a smooth toric variety 
with underlying fan $\Sigma$. The 
$r$-dimensional cones in $\Sigma$ form the 
subset $\Sigma(r)$. 
{Similarily, for any $\sigma\in\Sigma$,
the set $\sigma(r)\subseteq\Sigma(r)$ denotes the set of its
$r$-dimensional faces. All these}
cones live in the real 
vector spaces $N_\R=N\otimes_\Z\R$, where 
$N$ is the lattice of one-parameter groups 
of rank equal to $d=\dim X$. It 
is dual to the character lattice $M$ of $X$. 

\medskip

These lattices link to the group of torus 
invariant Weil divisors $\mathrm{Div}_T(X)$ 
and the class group $\Cl(X)$ of $X$ as follows. 
Any ray, that is, an element $\rho\in\Sigma(1)$, 
corresponds to a unique torus orbit 
$\orb(\rho)$ of codimension one, {namely its closure
$D_\rho=\ko{\orb}(\rho)$.} 
For $m\in M$ we define 
$\rho^\ast(m)=\div(\chi^m)=\sum_\rho\langle m,
\rho\rangle D_\rho$, where $\rho$ denotes 
both the ray and its primitive generator in $N$. 

\medskip

Moreover, let $\nabla$ be a lattice polytope in $M$
which is {\em compatible} with $\Sigma$, i.e., 
$\Sigma$ is a refinement of the normal fan 
$\normal(\nabla)$ of $\nabla$. This comes with an
associated Weil divisor 
$$
D_\nabla=
-\sum_\rho \min\langle\nabla,\rho\rangle\cdot D_\rho.
$$
The induced line bundle 
$\CO(\nabla):=\CO(D_\nabla)$ is globally generated
by the monomials $\chi^m$ with $m\in\nabla\cap M$. 
Further, for any {$\sigma\in\Sigma(d)$} we have an 
associated vertex $v(\sigma)\in\nabla\cap M$ 
which is characterised by
$$
\langle v(\sigma),\rho\rangle=\min\langle \nabla,\rho\rangle
\hspace{0.7em}\mbox{for}\hspace{0.7em}
\rho\in\sigma(1).
$$
It gives rise to a local generator of 
$\CO(\nabla)$ on $\toric(\sigma)$, namely 
$$
\CO(\nabla)|_{\toric(\sigma)}=
\chi^{v(\sigma)}\cdot\CO_{\toric(\sigma)}.
$$
{For non-maximal cones $\sigma\in\Sigma$ this works 
similarly. However, the vertices $v(\sigma)$ are only determined up to $\sigma^\bot$.}

\medskip

Finally, 
{we have the exact sequence}
\begin{equation}
\label{eq:MSeq}
\xymatrix{
0\ar[r]&M\ar[r]^-{\rho^\ast}&
\Big[\mathrm{Div}_T(X)\cong \bigoplus
\limits_{\rho\in\Sigma(1)}\Z D_\rho\Big]
\ar[r]^-{\pi}&\Cl(X)\ar[r]&0
}
\end{equation}
{if the primitive generators span $N_\R$ 
(for instance if $X$ is complete).} In particular, 
$\,\rank\Cl(X)=\sharp\Sigma(1)-\dim X$.

\medskip

\subsection{Exact sequences reflecting polyhedral covers}
\label{subsec:exSeqpolCov}
We recall a method from \cite{displayingExt} to 
transform polyhedral inclusion~/~exclusion sequences 
into exact sequences of sheaves on toric varieties 
$X=\toric(\Sigma)$.

\medskip

We start with a so-called {\em $\Sigma$-family} of 
lattice polytopes $S=\{\nabla_{\! i} 
\mid i=1,\ldots,n\}$ in $M_\R$. This means that for 
all $I\subseteq [n]:=\{1,\ldots,n\}$ the 
intersections
$$
\textstyle
\nabla_{\! I}:=\bigcap_{i\in I}\nabla_{\! i}
\mbox{ for }I\neq\varnothing
$$
as well as 
$$
\textstyle
\nabla_\varnothing:=\bigcup_{i\in [n]}\nabla_{\! i}
$$
are compatible with $\Sigma$,
i.e., $\Sigma\leq\normal(\nabla_{\! I})$.

\medskip

The second ingredient is the standard Koszul complex 
$(\Lambda^\kbb\kk^n,d)$ where
$$
d:\Lambda^{p+1}\kk^n\to\Lambda^{p}\kk^n,\quad 
e_I\mapsto\sum_{i\in I}(-1)^i e_{I\setminus \{i\}}
$$ 
for any $I\subseteq[n]$ with $\sharp I=p+1$,
and $e_I:=\wedge_{i\in I}e_i$ for the standard
basis vectors $e_1,\ldots,e_n\in\kk^n$. Tensoring 
with $\kk[M]$ yields the exact complex
$$
\xymatrix@C=1em{
0\ar[r]& \kk[M] \cdot e_{[n]} \ar[r]&
\renewcommand{\arraystretch}{0.6}
\oplus_{\hspace{-0.9em}
\begin{array}{c}\scriptstyle I\subseteq [n]\\
\scriptstyle\sharp I=n-1\end{array}\hspace{-0.7em}}
\kk[M] \cdot e_{I} \ar[r]& \ldots \ar[r]&
\oplus_{i\in[n]} \kk[M] \cdot e_i \ar[r]&
\kk[M] \cdot e_{\varnothing} \ar[r]& 0.
}
$$
For each $I\subseteq[n]$ the vector space 
$\kk[M]\cdot e_I$ appears as a direct summand 
inside this complex and contains the 
finite-dimensional subspace
$$
S(I):=\kk[\nabla_{\! I}\cap M]\cdot e_I := 
\oplus_{m\in \nabla_{\! I}\cap M} \,\kk\cdot
\chi^m\cdot e_I.
$$
It follows from~\cite[Section 3]{displayingExt} 
that these subvector spaces define an exact 
subcomplex $S^\kbb\subseteq\kk[M]\otimes
\Lambda^\kbb\kk^n$. Moreover, we have the

\begin{theorem}\cite[Theorem 12]{displayingExt}
\label{thm:S_Seq}
$S^\kbb$ is the complex of global sections of 
the equivariant, exact complex $\CS^\kbb$ given 
by the globally generated sheaves 
$$
\CS^k=\bigoplus_{\scriptstyle I\subseteq [n],
\,\sharp I=k}
\CO_X(\nabla_{\! I})
$$ 
on $X=\toric(\Sigma)$, namely
$$
\xymatrix@C=1em{
0\ar[r]& \CO_X(\nabla_{\![n]}) \ar[r]&
\oplus_{i=1}^n \CO_X(\nabla_{\![n]\setminus\{i\}}) \ar[r]&
\ldots \ar[r]& \oplus_{i=1}^n \CO_X(\nabla_{\! i}) \ar[r]&
\CO_X(\nabla_{\!\varnothing}) \ar[r]& 0.
}
$$
\end{theorem}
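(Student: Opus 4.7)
My plan is to construct the sheaf complex $\CS^\kbb$ explicitly, identify its global sections with $S^\kbb$ (so that exactness of global sections is automatic from the already-cited exactness of $S^\kbb$), and then promote this to sheaf exactness by a $T$-equivariant local analysis. For any $I\supseteq I'$ the inclusion $\nabla_I\subseteq\nabla_{I'}$ yields $\min\langle\nabla_I,\rho\rangle\ge\min\langle\nabla_{I'},\rho\rangle$ on every ray, hence $D_{\nabla_I}\le D_{\nabla_{I'}}$ and a canonical inclusion of sheaves $\CO_X(\nabla_I)\hookrightarrow\CO_X(\nabla_{I'})$. Assembling these with the standard Koszul sign convention produces a $T$-equivariant complex $(\CS^\kbb,d)$ of globally generated sheaves; each $\CO_X(\nabla_I)$ is globally generated and satisfies $\Gamma(X,\CO_X(\nabla_I))=\kk[\nabla_I\cap M]=S(I)$ because $\nabla_I$ is compatible with $\Sigma$. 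This immediately gives $\Gamma(X,\CS^\kbb)=S^\kbb$, so only sheaf-level exactness remains.

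I would check this on each affine chart $U_\sigma=\toric(\sigma)$ after further decomposing $T$-equivariantly into weight components. Over $U_\sigma$ the sheaf $\CO_X(\nabla_I)$ is the free rank-one module generated by $\chi^{v_I(\sigma)}$, so its weight-$m$ component is one-dimensional precisely when $m-v_I(\sigma)\in\sigma^\vee$ and zero otherwise. Setting
\[
\CI_\sigma(m):=\{I\subseteq[n]:m-v_I(\sigma)\in\sigma^\vee\},
\]
the containment $I\supseteq I'$ forces $v_I(\sigma)-v_{I'}(\sigma)\in\sigma^\vee$ (by the very argument used above), so $\CI_\sigma(m)$ is closed under passing to subsets and thus a simplicial subcomplex of the full $(n-1)$-simplex on $[n]$. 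Up to Koszul signs, the weight-$m$ restriction of $\CS^\kbb|_{U_\sigma}$ is the augmented simplicial chain complex of $\CI_\sigma(m)$, so exactness reduces to showing that $\CI_\sigma(m)$ is either empty or a full simplex on $J_\sigma(m):=\{i:\{i\}\in\CI_\sigma(m)\}$, with $\varnothing\in\CI_\sigma(m)$ whenever $J_\sigma(m)$ is non-empty.

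The non-trivial implication is the upward closure $I\subseteq J_\sigma(m)\Rightarrow I\in\CI_\sigma(m)$, which unwinds to
\[
\min\langle\nabla_I,\rho\rangle\le\max_{i\in I}\min\langle\nabla_i,\rho\rangle\quad\text{for all }\rho\in\sigma(1);
\]
the reverse inequality is obvious from $\nabla_I\subseteq\nabla_i$, so the content is a non-trivial geometric coincidence. I expect this to be the main obstacle: it is forced by the $\Sigma$-family hypothesis (which guarantees every $\nabla_I$ is compatible with $\Sigma$ so that its $\sigma$-vertex is determined ray-by-ray by the rays of $\sigma(1)$) together with the smoothness of $X$ (which makes the primitive generators of $\sigma(1)$ part of a basis of $M$, so that coordinate-wise maxima of lattice points are realised by genuine lattice points). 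The parallel statement for the augmentation index, that $\varnothing\in\CI_\sigma(m)$ forces some $\{i\}\in\CI_\sigma(m)$, uses compatibility of $\nabla_\varnothing=\bigcup_i\nabla_i$ in the same way: the $\sigma$-vertex $v_\varnothing(\sigma)$ actually lies in some $\nabla_{i^\ast}$ and then $v_{i^\ast}(\sigma)=v_\varnothing(\sigma)$ along $\sigma(1)$. Once these identifications are in place, augmented chain complexes of empty or full simplices are visibly acyclic, completing the proof.
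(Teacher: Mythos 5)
The paper does not prove this statement at all --- it is imported as \cite[Theorem 12]{displayingExt} --- so there is no internal proof to compare against; I can only judge your argument on its own terms. Its first half is fine: the inclusions $\CO_X(\nabla_I)\hookrightarrow\CO_X(\nabla_{I'})$ exist, global sections recover $S^\kbb$, and on a chart $U_\sigma$ the weight-$m$ strand is the augmented chain complex of the downward-closed family $\CI_\sigma(m)=\{I\,:\,m-v_I(\sigma)\in\sigma^\vee\}$. The gap is exactly the step you flag as the ``main obstacle'' and then only assert: the claim that $\CI_\sigma(m)$ is a full simplex on $J_\sigma(m)$, i.e.\ that $\min\langle\nabla_I,\rho\rangle=\max_{i\in I}\min\langle\nabla_i,\rho\rangle$ for $\rho\in\sigma(1)$. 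This is \emph{false} for $n\geq 3$. Let $\Sigma$ be the smooth complete fan with rays $(1,0),(0,1),(-1,1),(-1,0),(0,-1),(1,-1)$ and take
$\nabla_1=\conv\{(0,0),(2,0),(4,2),(0,2)\}$, $\nabla_2=[3,5]\times[0,2]$, $\nabla_3=\conv\{(2,0),(3,0),(3,1)\}$. All intersections (e.g.\ $\nabla_{\{1,2\}}=\conv\{(3,1),(4,2),(3,2)\}$, $\nabla_{\{1,3\}}$ and $\nabla_{\{2,3\}}$ lattice segments, $\nabla_{\{1,2,3\}}=\{(3,1)\}$) and the union $\nabla_\varnothing=[0,5]\times[0,2]$ are compatible with $\Sigma$, so this is a $\Sigma$-family; yet at the first-quadrant cone $\sigma$ one has $v_1(\sigma)=(0,0)$, $v_2(\sigma)=(3,0)$ but $v_{\{1,2\}}(\sigma)=(3,1)$. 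For $m=(3,0)$ the complex $\CI_\sigma(m)$ contains $\{1\},\{2\},\{3\},\{1,3\},\{2,3\}$ but not $\{1,2\}$. (Your claim does hold for $n=2$: convexity of $\nabla_\varnothing$ forces the segment between the two $\sigma$-corners into $\nabla_1$; for $n\geq 3$ a third polytope can bridge that segment, which is what happens above.)

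What actually saves exactness is weaker than your claim and requires the idea you omit. Because $\nabla_I$ is compatible with $\sigma$, its corner $v_I(\sigma)$ lies in $\nabla_I$, so $m-v_I(\sigma)\in\sigma^\vee$ is equivalent to $\nabla_I\cap(m-\sigma^\vee)\neq\varnothing$. Hence $\CI_\sigma(m)\setminus\{\varnothing\}$ is precisely the nerve of the cover of $\nabla_\varnothing\cap(m-\sigma^\vee)$ by the convex sets $\nabla_i\cap(m-\sigma^\vee)$. By the nerve theorem for finite convex covers this nerve is empty or contractible (in the example it is the path $1\mbox{--}3\mbox{--}2$), and the $\varnothing$-summand appears exactly when the union is nonempty, so the augmented chain complex is acyclic. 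Note that this is where the convexity of $\nabla_\varnothing=\bigcup_i\nabla_i$ genuinely enters --- you invoke it only for the augmentation index. So your outline is salvageable, but its central lemma must be replaced: ``full simplex'' has to become ``nerve of a convex cover of a convex set'' (or some equivalent acyclicity argument), and as stated your proof does not go through.
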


\begin{example}
\label{ex:HirzOne}
Let $X=\CH_1$ be the first Hirzebruch surface,
see also the picture on the left hand side 
of Figure~\ref{fig:Hirzebruch}. We consider 
the $\Sigma$-family $S=\{\nabla_{\!1},\,
\nabla_{\!2}\}$ provided by the triangle and 
the quadrangle in the middle box of the 
polyhedral exact sequence
$$
\newcommand{\scaleA}{0.5}
\newcommand{\spaceA}{\hspace*{1em}}
0 \hspace{0.5em} \to
\spaceA
\begin{tikzpicture}[scale=\scaleA]
\draw[color=oliwkowy!40] (-0.3,-0.3) grid (1.3,0.3);
\draw[thick, color=black]
(0,0) -- (1,0) -- (0,0);
\fill[thick, color=black]
(0,0) circle (2pt) (1,0) circle (2pt);
\draw[thick, color=origin]
(0,0) circle (3pt);
\end{tikzpicture}
\spaceA
\to
\spaceA
\raisebox{-0.5em}{\fbox{$
\begin{tikzpicture}[scale=\scaleA]
\draw[color=oliwkowy!40] (-0.3,-0.3) grid (1.3,1.3);
\draw[thick, color=black]
(0,0) -- (1,0) -- (0,1) -- (0,0);
\fill[thick, color=black]
(0,0) circle (2pt) (1,0) circle (2pt) (0,1) circle (2pt);
\draw[thick, color=origin]
(0,0) circle (3pt);
\end{tikzpicture}
\raisebox{0.5em}{$\oplus$}
\begin{tikzpicture}[scale=\scaleA]
\draw[color=oliwkowy!40] (-0.3,-0.3) grid (2.3,1.3);
\draw[thick, color=black]
(0,0) -- (2,0) -- (1,1) -- (0,1) -- (0,0);
\fill[thick, color=black]
(0,0) circle (2pt) (1,0) circle (2pt) (2,0) circle (2pt)
(0,1) circle (2pt) (1,1) circle (2pt);
\draw[thick, color=origin]
(0,1) circle (3pt);
\end{tikzpicture}
$}}
\spaceA
\to
\spaceA
\raisebox{-1em}{\begin{tikzpicture}[scale=\scaleA]
\draw[color=oliwkowy!40] (-0.3,-0.3) grid (2.3,2.3);
\draw[thick, color=black]
(0,0) -- (2,0) -- (0,2) -- (0,0);
\fill[thick, color=black]
(0,0) circle (2pt) (1,0) circle (2pt) (2,0) circle (2pt)
(0,1) circle (2pt) (1,1) circle (2pt) (0,2) circle (2pt);
\draw[thick, color=origin]
(0,1) circle (3pt);
\end{tikzpicture}}
\spaceA
\to \hspace{0.5em} 0.
$$
Here, the green dots indicate the position of the 
origin in each of the polyhedra. Using the notation 
from \ssect{pic2:fan} below, the leftmost polyhedron
$\nabla_{\!1}\cap\nabla_{\!2}$ equals $\kU$, and the 
triangle $\nabla_{\!1}$ itself is just $\kV$. The 
sequence may be therefore translated into
\begin{equation}
\label{eq:ExacSeq1}
0\to \CO_{\CH_1}(\kU) \to \CO_{\CH_1}(\kV)\oplus
\CO_{\CH_1}(\kU+\kV)\to\CO_{\CH_1}(2\kV)\to 0.
\end{equation}
{Let us translate this sequence into 
classical language. The right hand side of Figure~\ref{fig:fanH1}
displays the fan of the blow-up $b:\CH_1\to\PP^1$. 

\begin{figure}[ht]
\newcommand{\spaceA}{\hspace*{3.5em}}
\newcommand{\scaleA}{0.8}
\begin{tikzpicture}[scale=\scaleA]
\draw[thick, color=black]
(0,0) -- (2,0) (0,2) -- (0,-1.5) (0,0) -- (-1.5,-1.5);
\draw[thick, color=black]
  (2,0.4) node{$u^1$}
  (-1.5,-1) node{$u^2$}
  (-0.5,1.8) node{$v^1$}
  (0.5,-1.3) node{$v^2$};
\end{tikzpicture}
\spaceA
\raisebox{6ex}{$\stackrel{b}{\longrightarrow}$}
\spaceA
\begin{tikzpicture}[scale=\scaleA]
\draw[thick, color=black]
(0,0) -- (2,0) (0,2) -- (0,0) (0,0) -- (-1.5,-1.5);
\draw[thick, color=black]
  (2,0.4) node{$\rho^1$}
  (-1.5,-1) node{$\rho^0$}
  (-0.5,1.8) node{$\rho^2$};
\end{tikzpicture}
\caption{The }
\label{fig:fanH1}
\end{figure}

The labeling of the rays in $\Sigma(\CH_1)$ is concordant 
with the notation of \ssect{pic2:fan}. In particular, the closed orbit
$\ko{\orb}(v^2)$ equals the exceptional divisor $E\subseteq\CH_1$.
Since the blow-up $b$ contracts $E$ to 
the point $\ko{\orb(\rho^0,\rho^1)}=0\in\PP^2$, the remaining ray
$\rho^2$ encodes the line $L_\infty\subset\PP^2$ at infinity.
Moreover, the restriction 
$b:\ko{\orb(v^1)}\to\ko{\orb(\rho^2)}=E$ is an isomorphism.

\medskip

Therefore, the exact sequence~\eqref{eq:ExacSeq1} is obtained from 
$$
0\to \CO_{\CH_1} \to \CO_{\CH_1}(E)\oplus
\CO_{\CH_1}(L_\infty)\to\CO_{\CH_1}(E+L_\infty)\to 0.
$$
after replacing the polyhedra with toric Weil divisors and twisting 
by $\CO_{\CH_1}(\kU)=\CO_{\CH_1}(\ko{\orb(u^2)})$. This the 
Koszul complex of the exceptional line and the line at infinity.
}
\end{example}

\medskip

\subsection{Dealing with primitive collections}
\label{subsec:pc}
Next we {apply the formalism of 
Subsection~\eqref{subsec:exSeqpolCov} and} 
fix an arbitrary subset $S\subseteq\Sigma(1)$. 
Let $n=\sharp S$ {and choose} an order
on $S$. {We think} of $S$ as a sequence
$\rho_1,\ldots,\rho_n$ and identify $\rho_i\in S$ 
with $i\in[n]$.

\medskip

For subsets $I\subseteq S$ we define integral
tuples $k_I\in\Z^{\Sigma(1)}$ by
$$
(k_I)_\rho:=\left\{
\begin{array}{ll}
1 & \mbox{if } \rho\in S\setminus I\\
0 & \mbox{if otherwise, i.e., }\rho\in I\cup 
(\Sigma(1)\setminus S).
\end{array}\right.
$$
Interpreting $k_I\in\Z^{\Sigma(1)}$ as a
$T$-invariant, effective and reduced Weil divisor 
on $X=\toric(\Sigma)$ we denote by $\CO_X(k_I)$ 
the associated sheaf. Since $X$ is smooth, these 
divisors are automatically Cartier. In particular,
the sheaves $\CO_X(k_I)$ are invertible albeit not
nef in general.

\begin{proposition}
\label{prop:Resol}
Assume that $S\subseteq\Sigma(1)$ is a non-face, 
that is, $S$ does not define a cone in $\Sigma$.
Then, with $I$ running through the subsets of $S$,
the following complex $\CC_S^\kbb$ of invertible 
sheaves with the usual Koszul-like differentials 
is exact:
$$
0\to\CO_X(k_S)\to 
\oplus_{\sharp I=k-1} \CO_X(k_I) \to\ldots\to
\oplus_{\sharp I=1} \CO_X(k_I) \to \CO_X(k_\varnothing)\to 0.
$$
Here, $\CO_X(k_S)=\CO_X$ and $\CO_X(k_\varnothing)=
\CO_X(\ku{1}_S)$ which is the sheaf associated with 
the effective and reduced divisor 
$\sum_{\rho\in S}D_\rho$.
\end{proposition}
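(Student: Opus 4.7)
The plan is to recognise $\CC_S^\bullet$ as a twisted Koszul complex of the canonical sections $x_\rho\in\Gamma(X,\CO_X(D_\rho))$, $\rho\in S$, and then to verify exactness locally on each affine chart $U_\sigma=\toric(\sigma)$, using that $S$ is a non-face.

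First I would perform the identification. Since $k_I=k_\varnothing-\sum_{\rho\in I}D_\rho$, one has $\CO_X(k_I)\cong\CO_X(k_\varnothing)\otimes\bigotimes_{\rho\in I}\CO_X(-D_\rho)$, and the indicated differentials $\CO_X(k_I)\to\CO_X(k_{I\setminus\{\rho\}})$ coincide, up to the usual Koszul signs, with multiplication by the canonical section $x_\rho$. Hence $\CC_S^\bullet$ is, after twisting by $\CO_X(k_\varnothing)$, the Koszul complex attached to the tuple $(x_\rho)_{\rho\in S}$.

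Next I would invoke the standard fact that a Koszul complex is null-homotopic as soon as one of its defining sections is a unit, the explicit contracting homotopy being produced by dividing by that unit. Since exactness is local on $X$, it suffices to exhibit, for every $\sigma\in\Sigma$, a ray $\rho\in S$ for which $x_\rho$ is invertible on $U_\sigma$. Restricted to $U_\sigma$ the section $x_\rho$ is a unit precisely when $\rho\notin\sigma(1)$, while for $\rho\in\sigma(1)$ it equals a local coordinate cutting out $D_\rho\cap U_\sigma$. The problem therefore reduces to showing $S\not\subseteq\sigma(1)$ for every $\sigma\in\Sigma$.

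This last reduction is the step where the non-face hypothesis enters, and I expect it to be the heart of the matter even though the argument is short. If $S\subseteq\sigma(1)$ for some $\sigma\in\Sigma$, then smoothness of $X$ forces $\sigma$ to be a regular, hence simplicial, cone, so every subset of $\sigma(1)$ spans a face of $\sigma$; since $\Sigma$ is closed under taking faces, this face is itself a cone of $\Sigma$, which exhibits $S$ as the ray set of a cone and contradicts the non-face assumption. I do not expect any further obstacle: the only genuinely delicate bookkeeping concerns the Koszul signs in the first step, and otherwise the argument is a local unit argument combined with one standard structural feature of smooth fans.
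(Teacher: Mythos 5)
Your proof is correct, but it takes a genuinely different route from the paper. You identify $\CC_S^\kbb$ (after twisting by $\CO_X(k_\varnothing)^{-1}$) with the Koszul complex of the canonical sections $x_\rho$ of $\CO_X(D_\rho)$, $\rho\in S$, and prove exactness locally: on each chart $U_\sigma$ the non-face hypothesis, together with smoothness (hence simpliciality, so that any $S\subseteq\sigma(1)$ would span a face of $\sigma$ and thus a cone of $\Sigma$), guarantees some $\rho\in S$ with $\rho\notin\sigma(1)$, making $x_\rho$ a unit and the complex split exact there; equivalently, $\bigcap_{\rho\in S}D_\rho=\varnothing$. The paper instead chooses a sufficiently ample polytope $\Delta$, realizes the complex as the twist by $\CO(\Delta)^{-1}$ of the polyhedral inclusion/exclusion complex of the $\Sigma$-family $\nabla_I=\Delta(k_I)$ from Theorem~\ref{thm:S_Seq}, and the non-face hypothesis enters through a combinatorial argument with ``thickened facets'' showing $\bigcup_{\rho\in S}\nabla_\rho=\nabla_\varnothing$. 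Your argument is more elementary and self-contained (essentially the Stanley--Reisner relation alluded to in Remark~\ref{rem:PC}), avoiding the auxiliary ample polytope; the paper's approach buys compatibility with the $\Sigma$-family formalism used elsewhere (notably the explicit description of the complex of global sections by lattice points of nef polytopes). The only point to make explicit in your write-up is the standard fact that the vanishing locus of the section $(x_\rho)_{\rho\in S}$ is exactly $\bigcap_{\rho\in S}D_\rho$, which your chart-by-chart unit argument establishes.
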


\begin{proof}
We choose a sufficiently ample polytope $\Delta$ 
such that all polytopes
$$
\nabla_{\! I}:=\Delta(k_I):=\{m\in M_{\R}\kst
\langle m,\rho\rangle \geq\min
\langle\Delta,\rho\rangle-k_I\}
$$
are at least nef. It follows immediately that
$$
\nabla_{\! I}\cap\nabla_{\! J}=\Delta(k_I)\cap 
\Delta(k_{J})=\Delta(k_{I\cup J})=\nabla_{\!I\cup J}
$$
for all subsets $I$, $J\subseteq[n]$. In particular, 
$\nabla_{\! I}=\bigcap_{i\in I}\nabla_{\! i}$ if
$I\not=\varnothing$. We assert that 
$\bigcup_{i\in[n]}\nabla_{\! i}=\nabla_{\!\varnothing}$
whence $\{\nabla_{\! i}\}$ is a $\Sigma$-family.
This immediately implies the claim of the proposition 
by tensoring the sequence in Theorem~\ref{thm:S_Seq} 
with $\CO(\Delta)^{-1}$.

\medskip

We claim that for sufficiently ample $\Delta$, 
$\bigcup_{i\in[n]}\Delta(k_i)=\Delta(k_{\varnothing})$ 
if and only if $S$ is a non-face.

\medskip

Let $\bigcup_{i\in[n]}\Delta(k_i)=\Delta(k_{\varnothing})$.
Further, assume that $S\subseteq\sigma(1)$ for some 
(smooth) and full-dimensional cone $\sigma\in\Sigma$ 
with set of rays $\sigma(1)$. The vertex 
$v_{\varnothing}(\sigma)$ of 
$\nabla_{\!\varnothing}=\Delta(k_{\varnothing})$
associated with $\sigma$ satisfies
$$
\langle v_{\varnothing}(\sigma),\rho\rangle=
\left\{\begin{array}{ll}
\min\langle\Delta,\rho\rangle-1 & 
\mbox{if }\rho\in S\subseteq\sigma(1)\\
\min\langle\Delta,\rho\rangle & 
\mbox{if }\rho\in \sigma(1)\setminus S.
\end{array}\right.
$$
However, this contradicts the inequality 
$\langle v_{\varnothing}(\sigma),\rho_i\rangle\geq 
\min\langle\Delta,\rho_i\rangle$
of $\Delta(k_i)$ for $\rho_i\in S$.

\medskip

Conversely, assume that $S\notin\Sigma$.
For each $\rho\in\Sigma(1)$ we consider 
the associated facet 
$$
\face(\Delta,\rho):=\{r\in\Delta\kst \langle r,\rho\rangle
=\min\langle\Delta,\rho\rangle\}
$$ 
and define the ``thickened $\rho$-facet'' 
by
$$
F(\Delta,\rho):=\{r\in\Delta\kst \langle r,
\rho\rangle \leq
\min\langle\Delta,\rho\rangle + 1\}.
$$
More generally, these definitions work 
for all cones 
$\sigma\in\Sigma\setminus\{0\}$:
The face associated to $\sigma$ is
$$
\face(\Delta,\sigma):=
\bigcap_{\rho\in\sigma(1)}\face(\Delta,\rho)
$$
and the corresponding ``thickened 
$\sigma$-face'' is 
$$
F(\Delta,\sigma):=\bigcap_{\rho\in\sigma(1)}
F(\Delta,\rho).
$$
The usual one-to-one correspondence between 
faces of $\Delta$ and cones of
$\Sigma=\normal(\Delta)$ implies that 
$\sigma,\sigma'\in\Sigma\setminus\{0\}$
are not contained in some common cone 
$\til{\sigma}\in\Sigma$ if and only if the
$\face(\Delta,\sigma)$ and 
$\face(\Delta,\sigma')$ are disjoint. 
Therefore, the thickened faces 
$F(\Delta,\sigma)$ and $F(\Delta,\sigma')$ are 
also disjoint for sufficiently ample $\Delta$.

\medskip

Applying this to the polytope 
$\nabla_{\!\varnothing}=\Delta(k_{\varnothing})$
shows that for a non-face $S\subseteq\Sigma(1)$
the corresponding thickened facets are disjoint, 
that is, 
$\bigcap_{\rho\in S}F\big(\nabla_{\!\varnothing},
\rho\big)=\varnothing$. On the other hand, 
$$
\nabla_{\!\varnothing} \;\setminus\;
F\big(\nabla_{\!\varnothing},\rho\big)
\;\subseteq\; \nabla_{\!\rho}
$$
implies
$$
\renewcommand{\arraystretch}{1.3}
\textstyle
\bigcup_{\rho\in S}\nabla_{\!\rho}
\;\supseteq\; \bigcup_{\rho\in S}\big[
\nabla_{\!\varnothing} \setminus 
F(\nabla_{\!\varnothing},\rho)\big]
=  \nabla_{\!\varnothing} \setminus
\bigcap_{\rho\in S} F(\nabla_{\!\varnothing},\rho)
=  \nabla_{\!\varnothing}.
\vspace{-4ex}
$$
\vspace{2ex}
\end{proof}

\begin{remark}\label{rem:PC}
(i) Though we we will not use this observation 
in our later arguments, we note in passing
that for a primitive collection $S$ the 
exact sequence of Proposition~\ref{prop:Resol} 
represents the unique extension arising from 
$\gExt^{n-1}\big(\CO_X(k_\varnothing),\CO_X\big)=\kk$.

\smallskip

{(ii) Proposition~\ref{prop:Resol} 
is the homological counterpart to the multiplicative 
Stanley-Reisner presentation of the equivariant 
K-theory ring of a smooth toric variety, see~\cite{vv}.}
\end{remark}

\medskip

\section{Toric varieties of Picard rank two}
\label{sec:TVPic2}
\subsection{Kleinschmidt's classification}
\label{pic2:KleinSchmidt}
Let $X=\toric(\Sigma)$ be a complete and smooth 
toric variety of dimension $d$ and Picard rank 
two. These 
varieties are characterised by the following 
data~\cite{kle88}:

\begin{enumerate}
\item[(i)]
Natural numbers $\ell_1,\ell_2\geq 2$ 
with $\ell_1+\ell_2=d+2$.
\vspace{2ex}
\item[(ii)]
An integer vector $c\in\Z^{\ell_2}$ with 
nonpositive components 
$$
0=c^1\geq c^2\geq\ldots\geq \cc.
$$
\end{enumerate}
We write the corresponding variety 
$$
{X=(\ell_1,\ell_2;c).}
$$
Here are some key properties.

\subsubsection{The class map}
\label{pic2:classMap}
The varieties $(\ell_1,\ell_2;c)$ arise as 
fibre bundles over $\PP^{\ell_1-1}$ with 
typical fibre $\PP^{\ell_2-1}$. 
{Actually, we have
$$
(\ell_1,\ell_2;c)=\PP\Big(\hspace{-0.5em}
\oplus_{j=1}^{\ell_2}\CO_{\PP^{\ell_1-1}}(-c^j)\Big) \to\PP^{\ell_1-1}.
$$
The best known instance is the Hirzebruch surface
$\CH_{\xa}=\big(2,2;(0,-\xa)\big)$, cf.~Example~\ref{ex:resol}
or \ssect{subsec:ExamL1}.}
The fibration 
is trivial ({\em product case}) if and only if 
$c=0$. Identifying $\Div_T(\ell_1,\ell_2;c)$ 
with $\Z^{\ell_1+\ell_2}$ we can rearrange 
this data in terms of the $2\times(\ell_1 + \ell_2)$-matrix
$$
\pi_c:=\Matrr{3}{4}{1 & \ldots & 1 & 0 &  c^2 &\ldots & \cc\\
0  & \ldots & 0 & 1 & 1 & \ldots & 1}:\Z^{\ell_1+\ell_2}\surj\Z^2.
$$
For $X=(\ell_1,\ell_2;c)$ this provides the map 
$\pi:\mathrm{Div}_T(X)\to\Cl(X)$ in the exact 
sequence~\eqref{eq:MSeq} on page~\pageref{eq:MSeq}. 
In particular, $M\cong\ker\pi_c$ and 
$N\cong\coker\pi^\ast_c$ where 
$\pi^\ast_c:\Z^2\hookrightarrow\Z^{\ell_1+\ell_2}$ 
is the transpose. 

\subsubsection{The fan}
\label{pic2:fan}
Let $\{\eM{1},\ldots,\eM{\ell_1},
\fM{1},\ldots,\fM{\ell_2}\}$ and
$\{\eN{1},\ldots,\eN{\ell_1},
\fN{1},\ldots,\fN{\ell_2}\}$ be the mutually dual bases 
of $\mathrm{Div}_T(\ell_1,\ell_2;c)$
and $\mathrm{Div}_T(\ell_1,\ell_2;c)^\ast$, respectively. 
Under $\rho_c$ from the exact sequence
$$
\xymatrix{
0\ar[r]&\Cl(X)^\ast\ar[r]^-{\pi^\ast_c}&
\Big[\mathrm{Div}_T(X)^\ast
\cong\Z^{\Sigma(1)}\Big]\ar[r]^-{\rho_c}&N\ar[r]&0
}
$$
dual to~\eqref{eq:MSeq} the latter vectors are mapped 
to $\{\uN{1},\ldots,\uN{\ell_1},\vN{1},
\ldots,\vN{\ell_2}\}$. These images define the rays 
of the fan $\Sigma=\Sigma (\ell_1,\ell_2;c)$ 
generated by the $d$-dimensional smooth cones 
$$
\sigma_{ij}:=\langle\Sigma(1)\setminus\{\uN{i},\vN{j}\}
\rangle,\quad i=1,\ldots,\ell_1\mbox{ and }j=1,\ldots,
\ell_2.
$$ 
In particular, $\sharp\Sigma(d)=\ell_1\ell_2$ as 
stated before.
{Note that $\sum_{j=1}^{\ell_2}v^j=0$, but
$\sum_{i=1}^{\ell_1}u^i=\sum_{j=1}^{\ell_2}(-c^j)\cdot v^j$.
}

\subsubsection{The nef divisors}
\label{pic2:nef}
Regarded as a map $\Div_T(X)\to\Cl(X)$, 
$\pi_c$ sends the equivariant prime divisors
$\ko{\orb}(\uN{i})$ and $\ko{\orb}(\vN{j})$ 
to their classes. {The effective cone 
$\Eff\subseteq\Cl(X)$ is generated by 
$[\ko{\orb}(\uN{1})]$ and $[\ko{\orb}(\vN{\ell_2})]$. 
The nef cone $\Nef\subseteq\Eff$ is generated 
by $[\ko{\orb}(\uN{1})]$ and 
$[\ko{\orb}(\vN{1})]$. This provides a natural 
identification {$\Cl(X)\cong\Z^2$} by sending 
$[\ko{\orb}(\uN{1})]$ to $(1,0)$ and $[\ko{\orb}(\vN{1})]$ 
to $(0,1)$. In particular, $[\ko{\orb}(\vN{\ell_2})]$ 
is sent to $(\cc,1)$. These classes are also represented 
by the lattice polytopes} 
$$
\kU:=\Delta(\uN{1})=\conv\{\eM{i}-\eM{1}\mid 
i=1,\ldots,\ell_1\}
$$
and 
$$
\kV:=\Delta(\vN{1})=
\conv\{\fM{j}-\fM{1}-c^j\eM{i}\mid
i=1,\ldots,\ell_1;\, j=1,\ldots, \ell_2\}
$$
in $M_\R\subseteq\R^{\ell_1+\ell_2}$. Note 
that $\kU$ equals the $(\ell_1-1)$-dimensional
standard simplex $\Delta^{\ell_1-1}$ while $\kV$ 
can be understood as the Caley product
$$
\kV=(-c^1\cdot \Delta^{\ell_1-1})*\ldots
*(-\cc\cdot \Delta^{\ell_1-1}).
$$

\subsubsection{The anti-canonical divisor}
\label{pic2:can}
The divisor class of $-K_X$ is 
$$
[-K_X]=\pi_c(1,\ldots,1)=(\ell_1-\xb,\,\ell_2)
$$
with 
$$
\xb:=-\sum_j c^j\geq 0.
$$
Hence, $(\ell_1,\ell_2;c)$ is Fano if and 
only if $\,\xb<\ell_1$.

\begin{remark}
\label{rem:effective}
Rather than the full vector $c$ the non-negative 
parameters
$$
\textstyle
\xa:=-\cc
\hspace{1em}\mbox{and}\hspace{1em}
\xb=-\sum_{j=1}^{\ell_2} c^j
$$
are the most important ones for our purposes.
Almost by definition we get the inequalities 
\begin{equation}\label{eq:EffIneq}
0\leq\xa\leq\xb\leq\xa(\ell_2-1).
\end{equation}
The latter will be referred to as the 
{\em basic inequality}.
\gammaText{\ For further simplification 
we also introduce 
$$
\xc:=\xa\ell_2-\xb
$$
which satisfies $\gamma\geq\alpha\geq 0$.}
\end{remark}

\medskip

\subsection{Application of \ssect{subsec:pc} 
to the situation of Picard rank two}
\label{subsec:appPicTwo}
Next we apply Proposition~\ref{prop:Resol} 
to the toric varieties $X=(\ell_1,\ell_2;c)$. 
From the description of their fan in
\ssect{pic2:fan} we derive that there are 
exactly two primitive collections,
namely
$$
p_u=\{\uN{1},\ldots,\uN{\ell_1}\}
\hspace{0.7em}\mbox{and}\hspace{0.7em}
p_v=\{\vN{1}, \ldots,\vN{\ell_2}\}.
$$

\begin{corollary}
\label{coro:resol}
The primitive collections $p_u$ and $p_v$ give 
rise to the ``$\kU$-sequence'' 
$$
0\to\CO_X\to\ldots\to\CO_X(\kU)^{\otimes\ell_1}\to 0
$$
and the ``$\kV$-sequence''
$$
0 \to \CO_X\to\CF_1\to\ldots \to \CF_{\ell_2-1}
\to\CO_X(\kU)^{\otimes(-\xb)}\otimes
\CO_X(\kV)^{\otimes\ell_2}\to 0
$$
where the sheaves $\CF_k$ are direct sums 
indexed by subsets $I\subseteq\{1,\ldots,\ell_2\}$, 
namely
$$
\textstyle
\CF_k=\oplus_{\sharp I=k\,}\CO_X(c^I\,\kU+k\,\kV)
\hspace{1em}\mbox{with}\hspace{0.5em}
c^I:=\sum_{i\in I}c^i\leq 0.
$$
\end{corollary}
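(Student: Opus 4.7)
The plan is to apply Proposition~\ref{prop:Resol} to the two primitive collections $p_u$ and $p_v$ of $X=(\ell_1,\ell_2;c)$ and to translate the resulting divisors $k_I$ into elements of $\Cl(X)\cong\Z^2$ via the matrix $\pi_c$ of \ssect{pic2:classMap}. As a preliminary I would verify that $p_u$ and $p_v$ are non-faces: from \ssect{pic2:fan} each maximal cone $\sigma_{ij}=\langle\Sigma(1)\setminus\{\uN{i},\vN{j}\}\rangle$ omits some ray of $p_u$ and some ray of $p_v$, so neither collection is contained in a cone of $\Sigma$, and Proposition~\ref{prop:Resol} is applicable to both $S=p_u$ and $S=p_v$.

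Next I would reparametrise the complex of Proposition~\ref{prop:Resol} by substituting $J:=S\setminus I$; since $k_I=\sum_{\rho\in J}D_\rho$, the $p$-th term from the left becomes $\bigoplus_{\sharp J=p}\CO_X\big(\sum_{\rho\in J}D_\rho\big)$, a form better suited to reading off divisor classes. For $S=p_u$ the first $\ell_1$ columns of $\pi_c$ are all $(1,0)$, so $[D_{\uN{i}}]=\kU$ for every $i$ and the $p$-th term is $\binom{\ell_1}{p}$ copies of $\CO_X(p\,\kU)$; in particular the rightmost term is $\CO_X(\ell_1\,\kU)=\CO_X(\kU)^{\otimes\ell_1}$, giving the $\kU$-sequence. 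For $S=p_v$ the $(\ell_1+j)$-th column of $\pi_c$ yields $[D_{\vN{j}}]=(c^j,1)$ with $c^1=0$, whence for $\sharp J=k$ we compute $[\sum_{j\in J}D_{\vN{j}}]=c^J\,\kU+k\,\kV$ with $c^J:=\sum_{j\in J}c^j$. This is precisely the summand $\CF_k$ stated in the corollary, and specialising to $J=\{1,\ldots,\ell_2\}$ gives $c^J=-\xb$, reproducing the rightmost term $\CO_X(\kU)^{\otimes(-\xb)}\otimes\CO_X(\kV)^{\otimes\ell_2}$.

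No step is conceptually difficult once Proposition~\ref{prop:Resol} is in hand: everything boils down to reading column sums of the matrix $\pi_c$. The only piece requiring genuine care is the bookkeeping of the substitution $I\leftrightarrow S\setminus I$, which must be set up so that the indexing in Proposition~\ref{prop:Resol} lines up with the indexing of $\CF_k$ used in the statement of the corollary.
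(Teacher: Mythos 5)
Your proposal is correct and follows essentially the same route as the paper: apply Proposition~\ref{prop:Resol} to the two primitive collections, reindex via $J=S\setminus I$, and read off the divisor classes from the columns of $\pi_c$ (the paper writes out only the $p_v$ case and notes $p_u$ is similar). The explicit check that $p_u$ and $p_v$ are non-faces is a harmless addition the paper leaves implicit in calling them primitive collections.
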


\begin{remark}
Setting $\CF_0:=\CO_X$ and 
$\CF_{\ell_2}:=\CO_X(-\xb\kU+\ell_2\kV)$ 
we can extend this notation to 
$k\in\{0,\ldots,\ell_2\}$. Moreover, 
using the identification $\Pic X\cong\Z^2$ 
with $\kU=(1,0)$ and $\kV=(0,1)$ from 
\ssect{pic2:nef} we usually write 
$\CF_k=\oplus_{\sharp I=k\,}\CO_X(c^I,k)$.
\end{remark}

\begin{proof}
We deal with the primitive collection 
$S=\{\vN{1}, \ldots,\vN{\ell_2}\}$
giving rise to the $\kV$-sequence; the
case of the $\kU$-sequence works similarly.

\medskip

Proposition~\ref{prop:Resol} implies that
we obtain an exact sequence for 
$\CF_k=\oplus_{\sharp J=\ell_2-k\,}\CO_X(k_J)$ 
with $k_J=\ku{1}_{S\setminus J}$.
Renaming $I:=S\setminus J$ this becomes
$\CF_k=\oplus_{\sharp I=k\,}\CO_X(\ku{1}_I)$. 
On the other hand, assigning the Weil 
divisor $\ku{1}_I=\sum_{\rho\in I} D_\rho$ 
to its class means applying the map $\pi_c$ 
from \ssect{pic2:classMap}. Therefore, 
$\ku{1}_I$ becomes $(c^I,k)$ since 
$\pi_c(\fM{i})=(c^i,1)$.
\end{proof}

\begin{example}
\label{ex:resol}
One of the very first examples one comes
across when computing the cohomology of 
invertible sheaves on toric varieties is
$\gExt^1(2\kV,\kU)=\kk$ on the Hirzebruch
surface $X=\CH_1=(2,2;(0,1))$. This is 
represented by the exact sequence from 
Example~\ref{ex:HirzOne}. After twisting 
with $\CO(-\kU)$ this becomes the 
$\kV$-sequence of Corollary~\ref{coro:resol}.
\end{example}

\medskip

\subsection{{The co-immaculate locus}}
\label{subsec:ImmLoc}
From now on we will work with toric 
varieties of the form $X=(\ell_1,\ell_2;c)$ 
and identify $\Pic(X)$ with $\Z^2$ via 
the map $\pi_c$ from 
Subsection~\eqref{pic2:classMap}. 

\medskip

As pointed out in the introduction the 
locus of invertible sheaves with vanishing 
cohomology plays a crucial r\^ole in this 
paper. We call
$$
\nImm(X):=\{\CL\in\Pic(X)\mid
\gH^j(X,\CL^{-1})=0,\,j\in\Z_{\geq0}\}.
$$
the {{\em co-immaculate locus}} of $\Pic(X)$. 
This distinguished subset was referred to 
as the {negative} immaculate locus 
in~\cite{immaculate} and~\cite{p1p1p1}. 
In passing we remark that here and in the sequel we 
shall not distinguish between invertible 
sheaves and their isomorphism classes in 
the Picard group.

\medskip

Rewriting~\cite[Theorem VI.2]{immaculate} 
in terms of
{the co-immaculate} locus, we can describe 
$\nImm(\ell_1,\ell_2;\xa,\xb)=\cH\cup\cP$ 
as the union of the {\em horizontal strip}
$$
\cH:=\{(a,b)\in\Z^2\mid0<b<\ell_2\}
$$
and the {\em parallelogram}
\noGammaText{
\begin{align*}
\cP&\phantom{:}=
\{(a,b)\in\Z^2\mid  -\xb<a<\ell_1
\hspace{0.5em}\mbox{and}\hspace{0.5em}
0<\langle(a,b),\,(1,\xa)\rangle 
< \ell_1-\xb+\xa\ell_2\}.
\end{align*}
}
\gammaText{
\begin{align*}
\cP&\phantom{:}=
\{(a,b)\in\Z^2\mid  -\xb<a<\ell_1
\hspace{0.5em}\mbox{and}\hspace{0.5em}
0<\langle(a,b),\,(1,\xa)\rangle < \ell_1+\xc\}.
\end{align*}}
The {co-immaculate} locus is point symmetric with 
respect to $(\ell_1-\xb,\ell_2)/2$. In 
particular, the origin is point symmetric to 
the anti-canonical class $[-K]=(\ell_1-\xb,\ell_2)$. 
Figure~\ref{fig:ImmEx} 
illustrates the typical shape of the {co-immaculate} 
locus for $\ell_1=7$ and $\ell_2=4$. The 
parallelogram is indicated by the shaded area; 
lattice points in $\cH$ are in grey while lattice 
points in $\cP$ but not in $\cH$ are blue.

\begin{figure}[ht]
\newcommand{\spaceA}{\hspace*{1.5em}}
\newcommand{\scaleA}{0.4}
\begin{tikzpicture}[scale=\scaleA]
\draw[color=oliwkowy!40] (-1.3,-3.3) grid (8.3,7.3);
\fill[pattern color=parallelogram, pattern=north west lines]
  (0,-4.5) -- (0,8.5) -- (7,8.5) -- (7,-4.5) -- cycle;
\draw[thick, color=black]
  (0,0) -- (-2,0) (7,0) -- (9,0)
  (0,4) -- (-2,4) (7,4) -- (9,4);
\draw[thick, dotted, color=black]
  (-2,0) -- (-3,0) (9,0) -- (10,0)
  (-2,4) -- (-3,4) (9,4) -- (10,4);  
\draw[thick, color=black]
  (0,4) -- (0,7.5) (7,4) -- (7,7.5)
  (0,0) -- (0,-3.5) (7,0) -- (7,-3.5);
\draw[thick, dotted, color=black]
  (0,7.5) -- (0,8.5) (7,7.5) -- (7,8.5)
  (0,-3.5) -- (0,-4.5) (7,-3.5) -- (7,-4.5);
\fill[thick, color=origin]
  (0,0) circle (5pt);
\draw[thick, color=origin]
  (0,0) circle (7pt);
\draw[thick, color=origin]
  (-0.5,-0.5) node{$\scriptstyle0$};
\fill[thick, color=origin]
  (7,4) circle (5pt);
\draw[thick, color=origin]
  (7,4) circle (7pt);
\draw[thick, color=origin]
  (9.0,4.7) node{$\scriptstyle-[K]=(\ell_1,\,\ell_2)$};
\foreach \x in {1,2,...,6} 
{
    \foreach \y in {-3,-2,...,7}
    {
        \fill[thick, color=regii]   
        (\x,\y) circle (5pt);
    }
}
\foreach \y in {1,2,...,3} 
    \foreach \x in {9,8,...,-2} 
        \foreach \y in {1,2,...,3} 
        {
        \fill[thick, color=regi]
        (\x,\y) circle (5pt); 
        }
\draw[thick, color=black]
    (10.5,1) node{$\scriptstyle1$} 
    (10.5,3) node{$\scriptstyle\ell_2-1$}
    (10.5,2.2) node{$\scriptstyle\vdots$}
    (10.5,0) node{$\scriptstyle0$} 
    (10.5,4) node{$\scriptstyle\ell_2$};
\draw[thick, color=black]
    (0.0,9.0) node{$\scriptstyle0$}
    (7.2,9.0) node{$\scriptstyle\ell_1$} 
    (5.8,9.0) node{$\scriptstyle\ell_1-1$}
    (3.0,9.0) node{$\scriptstyle\ldots$} 
    (1.0,9.0) node{$\scriptstyle1$};
\draw[thick, color=black]
  (3.5,2) circle (7pt);
\fill[thick, color=black]
  (3.5,2) circle (5pt);
\draw[thick, color=black]
  (4.0,1.5) node{$\scriptstyle-K/2$};
\end{tikzpicture}
\spaceA
\begin{tikzpicture}[scale=\scaleA]
\draw[color=oliwkowy!40] (-3.3,-4.3) grid (7.3,8.3);
\fill[pattern color=parallelogram, pattern=north west lines]
  (-3,1.5) -- (-3,7.5) -- (7,2.5) -- (7,-3.5) -- cycle;
\draw[thick, color=black]
  (0,0) -- (7,-3.5) -- (7,0);
\fill[thick, color=origin]
  (0,0) circle (5pt);
\draw[thick, color=origin]
  (-0.5,-0.5) node{$\scriptstyle0$};
\draw[thick, color=black]
  (2,2) circle (9pt);
\draw[thick, color=black]
  (2.5,1.5) node{$\scriptstyle-K/2$};
\foreach \x in {1,2,...,6} {
  \fill[thick, color=regii]
  (\x,0) circle (5pt); }
\foreach \x in {5,6} {
  \fill[thick, color=regii]
    (\x,-2) circle (5pt); }
\foreach \x in {3,4,...,6} {
  \fill[thick, color=regii]
    (\x,-1) circle (5pt); }
\foreach \x in {10,9,...,-6} 
\foreach \y in {1,2,...,3} {
  \fill[thick, color=regi]
    (\x,\y) circle (5pt); }
\draw[thick, color=black]
  (11.5,1) node{$\scriptstyle1$} 
  (11.5,3) node{$\scriptstyle\ell_2-1$}
  (11.5,2.2) node{$\scriptstyle\vdots$}
  (11.5,0) node{$\scriptstyle0$} 
  (11.5,4) node{$\scriptstyle\ell_2$};
\draw[thick, color=black]
  (4,4) -- (-3,7.5) -- (-3,4);
\fill[thick, color=origin]
  (4,4) circle (5pt);
\draw[thick, color=origin]
  (6.0,4.5) node{$\scriptstyle-[K]=(\ell_1-\xb,\,\ell_2)$};
\draw[thick, color=black]
  (0.0,6.5) node{$\scriptstyle0$}
  (7.0,6.5) node{$\scriptstyle\ell_1$} 
  (4.0,6.5) node{$\scriptstyle\ldots$} 
  (1.0,6.5) node{$\scriptstyle1$};
\foreach \x in {3,2,...,-2} {
  \fill[thick, color=regii]
  (\x,4) circle (5pt); }
\foreach \x in {1,0,...,-2} {
  \fill[thick, color=regii]
    (\x,5) circle (5pt); }
\foreach \x in {-1,-2} {
  \fill[thick, color=regii]
    (\x,6) circle (5pt); }
\end{tikzpicture}
\caption{The {co-immaculate} locus of 
$(7,4;0)$ (left hand side) and
$\big(7,4;(0,0,-1,-2)\big)$ (right hand side).}
\label{fig:ImmEx}
\end{figure}

\begin{remark}\label{rem:xaxb}
\hfill
\begin{enumerate}
\item[(i)] The sequel of this article 
entirely depends on the combinatorics 
of the {co-immaculate} locus, not the 
underlying fan. The former depends 
solely on $\ell_1$, $\ell_2$, $\xa=-\cc$ 
and $\xb=-\sum_jc^j$. The {co-immaculate} 
locus will be therefore written as 
$\nImm(\ell_1,\ell_2;\xa,\xb)$ (or 
simply $\nImm$ depending on the context). 
\vspace{1ex}
\item[(ii)] The {co-immaculate} locus is 
{\em horizontally integral convex}, that 
is, if $s$, $t\in\nImm\cap\ell$ for any 
horizontal line $\ell\subseteq\Z^2$, then 
the segment $[s,t]\subseteq\ell$ is 
contained in $\nImm$, too.
\vspace{1ex}
\item[(iii)] Since $[-K]=(\ell_1-\xb,\ell_2)$ 
the anti-canonical class sits always to 
the left of the line $[x=\ell_1+1]$. By 
\ssect{pic2:can} it sits to the right of 
$[x=0]$ if and only if $(\ell_1,\ell_2;c)$ 
is Fano.
\end{enumerate}
\end{remark}

\medskip

\subsection{The associated lattice}
\label{subsec:AssLat}
The boundary points $(\ell_1,0)$ and 
$(-\xb,\ell_2)$ of the {co-immaculate} locus
$\nImm(\ell_1,\ell_2;\xa,\xb)$, 
define the {\em associated lattice}
$$
L:=\Z(\ell_1,0)\oplus\Z(-\xb,\ell_2).
$$
We denote $\CT=\Z^2/L$ the induced quotient 
and $\Phi:\Z^2\to\CT$ the projection map 
which sends $(a,b)$ to its class $[a,b]$ in $\CT$,
cf.\ also~\cite[Section 4]{p1p1p1}.

\begin{lemma}
\label{lem:LImm}
We have $L\cap\nImm(\ell_1,\ell_2;\xa,\xb)=\varnothing$. 
\end{lemma}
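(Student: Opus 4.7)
The plan is to parametrise $L$ via its basis $(\ell_1,0)$, $(-\xb,\ell_2)$ and then verify in the two pieces $\cH$ and $\cP$ of $\nImm$ separately that no nonzero integral combination can land inside. Write a general point of $L$ as
$$
(a,b)=m(\ell_1,0)+n(-\xb,\ell_2)=(m\ell_1-n\xb,\,n\ell_2)\quad\text{for }m,n\in\Z.
$$
For the horizontal strip $\cH$, membership requires $0<n\ell_2<\ell_2$, i.e.\ $0<n<1$, which rules out any integer $n$. So the substantive part reduces to ruling out $\cP$.

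For $\cP$, the key algebraic observation is that, using $\xc=\xa\ell_2-\xb$, one has
$$
a+\xa b = m\ell_1 + n(\xa\ell_2-\xb) = m\ell_1 + n\xc.
$$
Hence the two strip conditions defining $\cP$ translate to the pair of systems
\begin{align*}
(n-1)\xb &< m\ell_1 < \ell_1 + n\xb,\\
-n\xc &< m\ell_1 < \ell_1 + (1-n)\xc.
\end{align*}
I would then split on the sign of $n$. If $n=0$, both systems collapse to $0<m\ell_1<\ell_1$, which has no integer solution. If $n\geq 1$, the lower bound in the first system gives $m\ell_1 > (n-1)\xb\geq 0$, so $m\geq 1$; the upper bound in the second system, using $\xc\geq 0$, gives $m\ell_1 < \ell_1 + (1-n)\xc\leq \ell_1$, so $m<1$, a contradiction. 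If $n\leq -1$, symmetrically the first system forces $m\ell_1 < \ell_1 + n\xb\leq\ell_1$ (so $m\leq 0$), while the second forces $m\ell_1 > -n\xc\geq 0$ (so $m\geq 1$), again a contradiction.

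The non-strict bounds $\xb,\xc\geq 0$ are exactly what we need, and these come from the basic inequality $0\leq\xa\leq\xb\leq\xa(\ell_2-1)$ recorded in Remark~\ref{rem:effective} (since this also gives $\xc=\xa\ell_2-\xb\geq\xa\geq 0$). The main piece of bookkeeping one has to be careful about is the degenerate situation $\xc=0$, which via the basic inequality forces the product case $\xa=\xb=0$; there the strict inequalities $m\ell_1>0$ and $m\ell_1<\ell_1$ still exclude any integer $m$, so the argument uniformly goes through. Beyond this minor edge-case check, the proof is a direct arithmetic verification with no deeper combinatorial obstacle.
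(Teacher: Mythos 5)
Your proof is correct and follows essentially the same route as the paper's: parametrise $L$, observe that only the parallelogram $\cP$ can possibly be hit, rewrite the defining inequalities of $\cP$ in terms of the two integer coefficients using $a+\xa b=m\ell_1+n\xc$, and derive $0<m\ell_1<\ell_1$ in each sign-case of $n$ via $\xb,\xc\geq 0$ from the basic inequality. The only cosmetic differences are that the paper dispatches the product case $c=0$ as obvious up front and merges your cases $n=0$ and $n\leq -1$ into a single case $n\leq 0$.
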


\begin{proof}
This is obvious for $c=0$ so assume that $c\not=0$.

\medskip

Suppose we could pick $(a,b)\in L\cap\nImm$ so that 
$(a,b)=(n\ell_1-m\xb,m\ell_2)\in\nImm$ for some $n$, 
$m\in\Z$. In particular, we necessarily have 
$(a,b)\in\cP$. If $m\leq0$, 
{then we exploit the inequalities
$\langle (a,b),(1,0)\rangle <\ell_1$ and
$0<\langle (a,b),(1,\xa)\rangle$; they imply
}
$$
0\leq -m\xa\ell_2<n\ell_1-m\xb<\ell_1
$$ 
and therefore, {by adding $m\xb$,} 
$$
{m\xb\leq
\big[}\gammaText{-m\xc=}
m(\xb-\xa\ell_2)\big]<n\ell_1<\ell_1 +m\xb \leq \ell_1.
$$
By the basic inequality~\eqref{eq:EffIneq} 
on page~\pageref{eq:EffIneq} we have {even}
\noGammaText{$0\leq m(\xb-\xa\ell_2)$}%
\gammaText{$0\leq -m\xc$}
whence a contradiction for both cases $n\geq 1$ 
and $n\leq 0$. 
{On the other hand, for $m\geq1$ the 
remaining two inequalities of $\cP$, namely
$-\xb<\langle (a,b),(1,0)\rangle$ and
$\langle (a,b),(1,\xa)\rangle<\ell_1+\xc$ imply
}
\noGammaText{
$$
-\xb<n\ell_1-m\xb<\ell_1-\xb-(m-1)\xa\ell_2
$$
and therefore 
$$
0\leq(m-1)\xb<n\ell_1<\ell_1+(m-1)(\xb-\ell_2\xa)
\leq\ell_1.
$$
}%
\gammaText{
$$
0\leq(m-1)\xb<n\ell_1<\ell_1-(m-1)\xc\leq\ell_1.
$$
}%
{Again, this leads to a contradiction.}
\end{proof}

\medskip

\section{Exceptional sequences of invertible sheaves}
\label{sec:ExcSeq}
\subsection{The exceptionality condition}
\label{subsubsec:ExcCon}
Recall from the introduction that a 
sequence $\CL_0,\ldots,\CL_N$ of 
invertible sheaves on a variety $X$ 
is said to be {\em exceptional} if 
all backward $\mathrm{Ext}$-groups 
vanish. Equivalently, 
$$
\gH^k(X,\CL_i\otimes\CL_j^{-1})=0\mbox{ if }i<j.
$$
Consequently, we can 
rephrase the exceptionality condition 
as follows. Denoting the isomorphism 
classes in $\Pic(X)$ by $\cl_i:=\CL_i$, 
the sequence
$\cl_0,\cl_1,\ldots,\cl_N\in\Pic(X)$ is 
exceptional on $X$ if and only if 
$$
\vect{\cl_i\,\cl_j}=\cl_j-\cl_i\in\nImm(X)
$$
for all $i<j$, or equivalently,
$$
\cl_j\in\bigcap_{i<j} \big(\cl_i+\nImm(X)\big)
\quad\mbox{for all }j\geq 1.
$$

\medskip

This condition persists under a 
simultaneous shift so that we 
may replace the original sequence by 
$\CL_i':=\CL_i\otimes\CL_0^{-1}$. 
Therefore, we may assume that 
$\CL_0=\CO_X$ is trivial whenever this 
is convenient. In particular, $\cl_0=0$ 
which implies $\cl_i\in\nImm(X)$ for 
all $i\geq 1$.

\begin{example}\label{ex:PnP1}
For $X=\PP^{\ell-1}$, Serre duality and 
the well-known vanishing theorems for 
invertible sheaves on projective space 
yield 
$$
\nImm(\PP^{\ell-1})=\{\CO_{\PP^{\ell-1}}(1),\ldots,
\CO_{\PP^{\ell-1}}(\ell-1)\}
\entspr\,\{1,2,\ldots,\ell-1\}\subseteq\Z.
$$
\end{example}

\begin{example}\label{ex:P1P1}
The previous example behaves well under 
products. Figure~\ref{fig:ImmP2P2} 
visualises the case of $X=\PP^1\times\PP^1$. 
Let us determine on $\PP^1\times\PP^1$ all 
possible types of exceptional sequences of 
maximal length which is $4$. The shape of 
the {co-immaculate} locus immediately implies 
that we have at most two elements on the 
lines $[x=1]$ and $[y=1]$, and any two 
elements on the same line must be 
consecutive. Computing the sets 
$\nImm\cap\big((a,1)+\nImm\big)$ and 
$\nImm\cap\big((1,b)+\nImm\big)$ for 
$a$, $b\in\Z$ shows that we have four 
families $\cl^1,\ldots,\cl^4$ of {\mes}{s}
given by
$$
\begin{array}{llll}
\cl^1&=\big((0,0),\,(1,0),\,(a,1),\,(a+1,1)\big)&
\cl^2&=\big((0,0),\,(0,1),\,(1,b),\,(1,b+1)\big)\\[5pt]
\cl^3&=\big((0,0),\,(a,1),\,(a+1,1),\,(1,2)\big)&
\cl^4&=\big((0,0),\,(1,b),\,(1,b+1),\,(2,1)\big).
\end{array}
$$
Examples are displayed in Figure~\ref{fig:fourEx}.

\begin{figure}[ht]
\begin{tikzpicture}[scale=0.35]
\draw[color=oliwkowy!40] (-4.3,-4.3) grid (4.3,4.3);
\fill[thick, color=origin]
  (0,0) circle (7pt);
\draw[thick, color=origin]
  (-0.5,-0.5) node{$0$};
\foreach \x in {-4,...,4} 
{
    \fill[thick, color=regi]
    (\x,1) circle (7pt);
}
\draw[thick, color=regi]
  (5,1) node{$\ldots$};
\draw[thick, color=regi]
  (-5,1) node{$\ldots$};
\foreach \y in {-4,...,4}
{
    \fill[thick, color=regi]
    (1,\y) circle (7pt);
}
\draw[thick, color=regi]
  (1,5) node{$\vdots$};
\draw[thick, color=regi]
  (1,-4.7) node{$\vdots$};
\end{tikzpicture}
\caption{The {co-immaculate} locus of
$\PP^1\times\PP^1$ is given by the 
grey points. Note that the origin 
does not belong to
$\nImm(\PP^1\times\PP^1)$.}
\label{fig:ImmP2P2}
\end{figure}
\end{example}

\begin{figure}[ht]
\newcommand{\spaceA}{\hspace*{2.5em}}
\newcommand{\scaleA}{0.3}
\begin{tikzpicture}[scale=\scaleA]
\draw[color=oliwkowy!40] (-4.3,-4.3) grid (4.3,4.3);
\fill[thick, color=origin]
  (0,0) circle (9pt);
\foreach \x in {-4,...,4}
{
    \draw[thick, color=regi]
    (\x,1) circle (5pt);
}
\foreach \y in {-4,...,4}
{
    \draw[thick, color=regi]
    (1,\y) circle (5pt);
}
\fill[thick, color=black]
  (-3,1) circle (9pt) (-2,1) circle (9pt) 
  (1,0) circle (9pt);
\end{tikzpicture}
\spaceA
\begin{tikzpicture}[scale=\scaleA]
\draw[color=oliwkowy!40] (-4.3,-4.3) grid (4.3,4.3);
\fill[thick, color=origin]
  (0,0) circle (9pt);
\foreach \x in {-4,...,4}
{
    \draw[thick, color=regi]
    (\x,1) circle (5pt);
}
\foreach \y in {-4,...,4}
{
    \draw[thick, color=regi]
    (1,\y) circle (5pt);
}
\fill[thick, color=black]
  (1,-3) circle (9pt) (1,-2) circle (9pt) 
  (0,1) circle (9pt);
\end{tikzpicture}
\spaceA
\begin{tikzpicture}[scale=\scaleA]
\draw[color=oliwkowy!40] (-4.3,-4.3) grid (4.3,4.3);
\fill[thick, color=origin]
  (0,0) circle (9pt);
\foreach \x in {-4,...,4}
{
    \draw[thick, color=regi]
    (\x,1) circle (5pt);
}
\foreach \y in {-4,...,4}
{
    \draw[thick, color=regi]
    (1,\y) circle (5pt);
}
\fill[thick, color=black]
  (-3,1) circle (9pt) (-2,1) circle (9pt) 
  (1,2) circle (9pt);
\end{tikzpicture}
\spaceA
\begin{tikzpicture}[scale=\scaleA]
\draw[color=oliwkowy!40] (-4.3,-4.3) grid (4.3,4.3);
\fill[thick, color=origin]
  (0,0) circle (9pt);
\foreach \x in {-4,...,4}
{
    \draw[thick, color=regi]
    (\x,1) circle (5pt);
}
\foreach \y in {-4,...,4}
{
    \draw[thick, color=regi]
    (1,\y) circle (5pt);
}
\fill[thick, color=black]
  (1,-3) circle (9pt) (1,-2) circle (9pt) 
  (2,1) circle (9pt);
\end{tikzpicture}
\caption{$\cl^1$, $\cl^2$, $\cl^3$, $\cl^4$ 
for $a=b=-3$.}
\label{fig:fourEx}
\end{figure}

\medskip

\subsection{The maximality condition}
\label{subsec:MaxCond}
Using the identification 
$\Pic(\toric(\Sigma))\cong\Z^2$
we think of exceptional sequences 
as subsets of $\Z^2$ together 
with a certain ordering. 

\begin{definition}
\label{def:MaxExc}
Consider the smooth toric 
variety of Picard rank two 
$(\ell_1,\ell_2;c)$ which
is of dimension $d:=\dim X=\ell_1+\ell_2-2$,
cf.~\ssect{pic2:KleinSchmidt}. The 
subset inside $\Z^2$ underlying 
an exceptional sequence 
$\cl=(\cl_0,\cl_1,\ldots,\cl_N)$ 
is called
\begin{enumerate}
\item[(i)] 
{\em \nex} if it is not strictly contained 
in some other subset of $\Z^2$ underlying 
an exceptional sequence.
\vspace{1ex}
\item[(ii)] 
{\em \tm} if $N+1$ equals 
$\sharp\Sigma(d)=\ell_1\ell_2$.
\end{enumerate}
By abuse of language, we refer to the 
sequence $\cl$ itself as {\nex} or {\tm} 
if the underlying set is {\nex} or {\tm}. 
\end{definition}

Note that ``\tm'' implies ``\nex'', 
the converse being false, see 
Example~\ref{exam:NexNmax}. A crucial 
property of maximal exceptional sequences 
is this. 

\begin{lemma}
\label{lem:PhiMap}
If $\cl$ is a maximal exceptional sequence, 
then the restriction of $\Phi:\Z^2\to\CT$ 
from \ssect{subsec:AssLat}
defines a bijection.
\end{lemma}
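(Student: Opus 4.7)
The plan is to prove bijectivity by showing injectivity and then invoking a cardinality argument. First, I would observe that the associated lattice $L = \Z(\ell_1,0) \oplus \Z(-\xb,\ell_2)$ has index
\[
[\Z^2 : L] = \left| \det \begin{pmatrix} \ell_1 & -\xb \\ 0 & \ell_2 \end{pmatrix} \right| = \ell_1 \ell_2
\]
in $\Z^2$, so $\sharp\CT = \ell_1\ell_2$. On the other hand, by maximality and $\sharp\Sigma(d) = \ell_1\ell_2$ (see Subsection~\ref{pic2:fan}), the underlying set of $\cl$ consists of exactly $\ell_1\ell_2$ distinct elements as well.

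The key step is then injectivity of $\Phi|_\cl$. Suppose to the contrary that there exist indices $i \neq j$ with $\Phi(\cl_i) = \Phi(\cl_j)$, and without loss of generality $i < j$. By definition of $\Phi$ this means
\[
\cl_j - \cl_i \in L.
\]
On the other hand, since $\cl$ is exceptional, the exceptionality condition recalled in Subsection~\ref{subsubsec:ExcCon} forces
\[
\cl_j - \cl_i = \vect{\cl_i\,\cl_j} \in \nImm(\ell_1,\ell_2;\xa,\xb).
\]
Hence $\cl_j - \cl_i$ lies in $L \cap \nImm$, which contradicts Lemma~\ref{lem:LImm}.

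Therefore $\Phi|_\cl$ is injective. Because the source and target have the same finite cardinality $\ell_1\ell_2$, injectivity upgrades to bijectivity. I do not anticipate any real obstacle here: the result is essentially an immediate consequence of the empty-intersection Lemma~\ref{lem:LImm}, matched with the counting identity $\sharp\cl = \sharp\CT$ coming from the maximality of $\cl$.
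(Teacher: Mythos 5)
Your proposal is correct and follows essentially the same route as the paper: injectivity via the exceptionality condition combined with $L\cap\nImm=\varnothing$ (Lemma~\ref{lem:LImm}), then surjectivity from the equality of cardinalities. The only addition is your explicit determinant computation of $[\Z^2:L]=\ell_1\ell_2$, which the paper leaves implicit.
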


\begin{proof}
If $\Phi(\cl_i)=\Phi(\cl_j)$ for some pair 
$\cl_i<\cl_j$ in $\cl$, then $\cl_j-\cl_i
\in\nImm(\ell_1,\ell_2;\xa,\xb)\cap L$ which 
is empty by Lemma~\ref{lem:LImm}. Hence 
$\Phi|_\cl$ is injective.

\medskip

Since $\cl$ and $\CT$ have the same cardinality, 
$\Phi|_\cl$ is also surjective.
\end{proof}

\medskip

\subsection{The fullness condition}
\label{subsec:FullCond}
Recall from the introduction that an 
exceptional sequence $\cl$ is said to 
be {\em full} if the underlying set 
$\{\cl_0,\ldots,\cl_N\}$ generates 
$\CD(X)$. Since $X$ is smooth, 
$\CD(X)$ is generated by $\Pic(X)$. 
Regarding $\cl$ as a subset of $\Pic(X)$ 
it is therefore sufficient to show that 
we can generate any invertible sheaf.

\begin{example}\label{ex:Beilinson}
The celebrated Beilinson exact 
sequence~\cite{beilinson} yields
\begin{equation}\label{eq:BeilSeq}
0 \to \Lambda^\ell\kk^{\ell}\otimes
\CO_{\PP^{\ell-1}}(0)\to
\Lambda^{\ell-1}\kk^{\ell}\otimes
\CO_{\PP^{\ell-1}}(1)\to\ldots\to
\Lambda^{0}\kk^{\ell}\otimes
\CO_{\PP^{\ell-1}}(\ell)\to 0.
\end{equation}
In our language, this is just the 
exact sequence from 
Theorem~\ref{thm:S_Seq} for the 
standard $(\ell-1)$-simplex 
in $\R^{\ell}$. 

\medskip

At any rate, given the sequence 
$\CO_{\PP^{\ell-1}}(1),\ldots,
\CO_{\PP^{\ell-1}}(\ell-1)$ 
the bundle $\CO_{\PP^{\ell-1}}(0)$ 
generates $\CO_{\PP^{\ell-1}}(\ell)$ 
in $\CD(\PP^{\ell-1})$ and vice versa. 
Thus, any sequence of $\ell$ consecutive 
classes of invertible sheaves generates 
$\Pic(\PP^{\ell-1})\cong\Z$ and thus 
the derived category $\CD(\PP^{\ell-1})$. 
Note that any such sequence actually 
defines a maximal exceptional 
sequence.
\end{example}

\medskip

\subsection{The helix operator}
\label{subsec:helixOp}
Let $\cl=(\cl_0,\cl_1,\ldots,\cl_N)$ be a 
{\mes} on $(\ell_1,\ell_2;c)$. We define
$$  
\helix(\cl):=(\cl_1,\cl_2,\ldots,\cl_N,
-[K]+\cl_0)=(\cl_1,\cl_2,\ldots,\cl_N,\,
(\ell_1-\xb,\ell_2)+\cl_0)
$$  
and call $\helix$ the {\em helix operator}. 
It preserves exceptionality as follows 
directly from the point symmetry of $\nImm$ 
with respect to $-[K]/2$, cf.\ 
\ssect{subsec:ImmLoc}. 

\begin{remark}
\label{rem:HelixOperator}
{Helixing is a standard operation in 
the theory of exceptional sequences, cf.\ for 
instance~\cite{rudakov}, and it is well-known 
that the helix operator also preserves fullness. 
From our combinatorial point of view, this follows 
as a corollary to Theorem~{\thFull}; assuming the 
general theory, Theorem~{\thFull} is a simple 
corollary of Theorem~{\thHelex}.}
\end{remark}

\medskip

\section{Lexicographical order and chains}
\label{sec:Relax}
\subsection{Lexicographically orderable exceptional sequences}
\label{subsec:CriteriaRelax}

Let $\cl=(\cl_0,\ldots,\cl_N)$ 
be an exceptional sequence on 
$X=(\ell_1,\ell_2;c)$, that is, 
$\cl_j-\cl_i\in\nImm=
\nImm(\ell_1,\ell_2;\xa,\xb)$ 
for all $0\leq i<j\leq N$. The 
underlying set $\{\cl_0,\ldots,\cl_N\}$ 
can give rise to exceptional 
sequences for various orders.
Still, we necessarily have the

\begin{lemma}
\label{lem:HorOrd}
Let $\cl=(\cl_0,\ldots,\cl_N)$ 
be an exceptional sequence on 
$(\ell_1,\ell_2;c)$. Then for 
any $\cl_i=(a_i,b)$ and $\cl_j=(a_j,b)$ 
in $\cl\cap[y=b]$ we have $a_i<a_j$ 
if and only if $i<j$. Moreover, 
$|a_j-a_i|\leq\ell_1-1$.
\end{lemma}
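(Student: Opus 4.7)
The plan is to translate the exceptionality condition directly into a geometric condition on the difference vector, which happens to land on the $x$-axis.

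First, I would note that for any two points $\cl_i=(a_i,b)$ and $\cl_j=(a_j,b)$ in $\cl\cap[y=b]$, the difference $\cl_j-\cl_i=(a_j-a_i,\,0)$ lies on the horizontal line $[y=0]$. By the formulation of exceptionality recalled in Subsection~\ref{subsubsec:ExcCon}, the order $i<j$ forces $\cl_j-\cl_i\in\nImm(\ell_1,\ell_2;\xa,\xb)$.

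Next, I would examine the two regions $\cH$ and $\cP$ whose union is $\nImm$. Since $\cH$ requires strictly positive $y$-coordinate (and strictly less than $\ell_2$), the vector $(a_j-a_i,0)$ cannot belong to $\cH$. Hence it must lie in the parallelogram $\cP$. Reading off the defining inequalities of $\cP$ with second coordinate $b=0$, namely $-\xb<a_j-a_i<\ell_1$ together with $0<(a_j-a_i)+\xa\cdot 0<\ell_1+\xc$, and using $\xc\geq 0$, the effective constraint reduces to
\[
1\leq a_j-a_i\leq \ell_1-1.
\]
In particular, $i<j$ implies $a_i<a_j$ and simultaneously yields the distance bound $|a_j-a_i|\leq\ell_1-1$.

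For the converse direction, I would simply reverse the roles: if $j<i$ then the same reasoning applied to $\cl_i-\cl_j$ gives $a_j<a_i$. Combining the two implications establishes the equivalence $i<j\iff a_i<a_j$, and the bound on $|a_j-a_i|$ has already been obtained. I do not expect any real obstacle here; the argument is essentially a one-line check against the explicit description of $\nImm$ recalled in Subsection~\ref{subsec:ImmLoc}, and the only point to be careful about is verifying that the upper bound $a<\ell_1+\xc$ is automatically implied by $a<\ell_1$ in view of $\xc\geq 0$ (cf.\ Remark~\ref{rem:effective}).
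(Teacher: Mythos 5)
Your proposal is correct and follows essentially the same route as the paper: the paper's proof is precisely the observation that $\cl_j-\cl_i=(a_j-a_i,0)\in\nImm$ holds if and only if $0<a_j-a_i<\ell_1$, which you spell out by checking the difference vector against the defining inequalities of $\cH$ and $\cP$. Your extra care about $\xc\geq 0$ and the converse direction just makes explicit what the paper leaves implicit.
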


\begin{proof}
By definition of exceptionality, 
$i<j$ implies
$$
\cl_j-\cl_i=(a_j-a_i,\,0)\in\nImm
$$
which holds if and only if 
$0<a_j-a_i<\ell_1$. 
\end{proof}

\begin{definition}
The subset  
underlying an exceptional sequence 
$\cl$ is called {\em vertically orderable} 
if it also defines an exceptional 
sequence with respect to the {\em vertical 
(lexicographical) order} 
\begin{center}
$(a_1,b_1)<(a_2,b_2)$ if and only if 
$b_1<b_2$ or $\big(b_1=b_2$ and $a_1<a_2\big)$.
\end{center}
Similarly, it is called {\em horizontally 
orderable} if it defines an exceptional 
sequence with respect to the 
{\em horizontal (lexicographical) order}
\begin{center}
$(a_1,b_1)<(a_2,b_2)$ if and only if 
$a_1<a_2$ or $\big(a_1=a_2$ and 
$b_1<b_2\big)$.
\end{center}
\end{definition}

In general, exceptionality cannot 
be expected to be preserved under 
lexicographic reordering. For 
instance, the sequence 
$\cl=(\cl_0,\,\cl_1,\,\cl_2)$ on 
$(\ell_1,\ell_2;c)$ with $c\not=0$ 
and $\xa\ell_2+1<\ell_1$ 
which is given by
$$
\cl_0=(0,0),\quad\cl_1=(\xa\ell_2+1,
-\ell_2),
\quad\cl_2= (\xa\ell_2+2-\xb,0)
\gammaText{\;=(\xc+2,0)}
$$ 
is certainly exceptional, but 
neither the horizontally ordered 
sequence $(\cl_0,\cl_2,\cl_1)$ nor 
the vertically ordered sequence 
$(\cl_1,\cl_0,\cl_2)$ are as
$\xb<\xa\ell_2$ in virtue of the
basic inequality~\eqref{eq:EffIneq}. 
However, these ``downward jumps'' are 
the only obstruction against vertical 
order. 

\begin{proposition}
\label{prop:RelCrit}
An exceptional sequence 
$\cl=(\cl_0,\ldots,\cl_N)$ on
$(\ell_1,\ell_2;c)$ with 
$\cl_i=(a_i,b_i)$ is vertically 
orderable if and only if 
\begin{equation}\label{eq:RelBound}
b_j-b_i>-\ell_2
\end{equation}
for all $0\leq i<j\leq N$. 
\end{proposition}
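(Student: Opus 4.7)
The plan is to treat necessity and sufficiency separately, using throughout the explicit description $\nImm = \cH \cup \cP$ from Subsection~\ref{subsec:ImmLoc}. The key geometric observation that drives the whole argument is that the ``slanted'' defining inequality $0 < \langle (a,b),(1,\xa)\rangle$ of the parallelogram $\cP$ is antisymmetric under $v \mapsto -v$, so a nonzero vector and its negative cannot simultaneously lie in $\cP$.

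For sufficiency, I assume the bound and pick two elements $P = (a_P,b_P)$ and $Q = (a_Q,b_Q)$ of $\cl$ with $P < Q$ in the vertical order; the goal is $Q - P \in \nImm$. If $P$ already precedes $Q$ in the original order, this is just the exceptionality hypothesis. Otherwise $Q$ precedes $P$ originally, so original exceptionality gives $P - Q \in \nImm$, while the hypothesis applied to this pair yields $b_Q - b_P < \ell_2$. The subcase $b_P = b_Q$ is ruled out by Lemma~\ref{lem:HorOrd}, which on each horizontal line forces the original order to coincide with the horizontal (and hence vertical) order, contradicting the assumption that the two orders disagree here. Thus $0 < b_Q - b_P < \ell_2$, so $Q - P$ lies in the horizontal strip $\cH \subseteq \nImm$.

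For necessity, I assume vertical orderability and suppose, for contradiction, that $b_j - b_i \leq -\ell_2$ for some $i < j$. Original exceptionality forces $\cl_j - \cl_i \in \nImm$; since $\cl_j$ now precedes $\cl_i$ in the vertical order, vertical orderability forces $\cl_i - \cl_j \in \nImm$ as well. The second coordinates of both vectors have absolute value at least $\ell_2$, so neither lies in $\cH$, and both must land in $\cP$. Applying the slanted inequality of $\cP$ to $\cl_j - \cl_i = (a_j - a_i,\,-(b_i - b_j))$ gives $a_j - a_i > \xa (b_i - b_j)$, whereas applying it to the negative $\cl_i - \cl_j$ gives $a_j - a_i < \xa (b_i - b_j)$. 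These inequalities are incompatible, which is the desired contradiction.

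I do not anticipate any serious obstacle. The conceptual heart of the argument is the antisymmetry observation stated at the outset; the remainder is careful bookkeeping of which of the two regions $\cH$ or $\cP$ the relevant differences fall into, with the vertical-gap assumption $|b_i - b_j| \geq \ell_2$ being exactly what excludes $\cH$ and thereby forces the incompatible use of the slanted $\cP$-inequality.
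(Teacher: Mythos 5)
Your proof is correct, but both halves take a genuinely different route from the paper's. For necessity, the paper also places both $\cl_j-\cl_i$ and $\cl_i-\cl_j$ in $\cP$, but then derives the contradiction from the constraint $-\xb<a$ combined with the basic inequality~\eqref{eq:EffIneq}; your observation that the strict slanted inequality $0<\langle v,(1,\xa)\rangle$ is antisymmetric in $v$ is cleaner, makes the geometric reason transparent, and avoids invoking~\eqref{eq:EffIneq} altogether. For sufficiency, the paper realizes the vertical reordering as a sequence of adjacent transpositions, checking that each swap of a "descending" neighbouring pair preserves exceptionality and the bound~\eqref{eq:RelBound} (a bubble-sort argument, which it reuses later in Lemma~\ref{lem:ReorderJump}); you instead verify directly, pair by pair, that the vertically ordered set is exceptional, splitting according to whether the vertical order agrees with the original one and using Lemma~\ref{lem:HorOrd} to exclude the equal-height subcase and the strip $\cH$ to handle the reversed pairs. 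Your direct check is shorter and self-contained; the paper's transposition argument buys a reusable mechanism for partial reorderings. Both arguments are sound.
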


\begin{proof}
Assume that $\cl$ is vertically 
orderable and that we have a pair 
$\cl_i<\cl_j$ with
$$
-\ell_2\geq b:=b_j-b_i.
$$
In particular, 
{$\cl_j-\cl_i$ must lie in the parallelogram $\cP$, hence}
$$
-\xa b<a:=a_j-a_i.
$$
On the other hand, we also have 
$\cl_i-\cl_j\in\nImm$. Since 
$-b\geq\ell_2$, the difference 
$\cl_i-\cl_j=(-a,-b)$ must be  
in the parallelogram $\cP$ {as well whence} $-\xb<-a$. By the basic inequality~\eqref{eq:EffIneq}, 
$a>-\xa b>\xb$ {and thus} $-a<-\xb$, 
contradiction.

\medskip

Conversely, assume that the 
bound~\eqref{eq:RelBound} holds. 
We show that a permutation 
$\sigma:\{0,\ldots,N\}\to\{0,\ldots,N\}$ {exists} 
such that 
$\cl_{\sigma(0)},\ldots,\cl_{\sigma(N)}$ 
is an exceptional sequence with respect 
to lexicographical order.

\medskip

From Lemma~\ref{lem:HorOrd}, $i<j$ 
implies $a_i<a_j$ if $\cl_i=(a_i,b)$ 
and $\cl_j=(a_j,b)$. In addition, we 
show that we can interchange the order 
of two adjacent sequence elements 
$\cl_i=(a_i,b_i)$ and 
$\cl_{i+1}=(a_{i+1},b_{i+1})$ with 
$b_i>b_{i+1}$ while keeping the 
sequence exceptional.

\medskip

Indeed, the difference 
$\vect{\cl_i\,\cl_{i+1}}=(a_{i+1}-a_i,\; b_{i+1}-b_i)$ 
lies in $\nImm$. The assumptions 
imply
$$
0<b_i-b_{i+1}\leq\ell_2-1.
$$
Hence 
$\vect{\cl_{i+1}\,\cl_i}=-\vect{\cl_i\,\cl_{i+1}}$ 
belongs to the horizontal strip 
and therefore lies in $\nImm$, too.

\medskip

Replacing in the sequence the 
pair $\cl_i,\cl_{i+1}$ by $\cl_{i+1},\cl_i$ 
does not affect the remaining difference 
vectors of $\{\cl_i,\cl_{i+1}\}$ with 
elements from $\{\cl_0,\ldots,\cl_{i-1}\}$ 
and $\{\cl_{i+2},\ldots,\cl_N\}$ so that 
$\cl_0,\ldots,\cl_{i-1},\cl_{i+1},\cl_i,
\cl_{i+2},\ldots,\cl_N$ is still exceptional. 
Furthermore, it still satisfies~\eqref{eq:RelBound} 
since $b_{i+1}-b_i\leq0$. After a finite 
number of pairwise permutations which 
preserve~\eqref{eq:RelBound} and exceptionality, 
we obtain the desired permutation.
\end{proof}

\begin{corollary}[``no horizontal gaps'']
\label{coro:NoGaps}
Let $\cl=(\cl_0,\cl_1,\ldots,\cl_N)$ define an
exceptional sequence which is not extendable. 
If $\cl$ is vertically orderable, 
then the restriction $\cl\cap[y=b]$ to any horizontal 
line has no gaps. Namely, if $(a,b)$ and $(a',b)\in\cl$ 
with $a<a'$, then $(a'',b)\in\cl\cap[y=b]$ for all 
$a\leq a''\leq a'$.
\end{corollary}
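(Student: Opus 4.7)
The plan is to prove the contrapositive in the usual form: assume a gap exists and construct an exceptional sequence strictly enlarging $\cl$, contradicting non-extendability. Concretely, assume $(a,b), (a',b) \in \cl$ with $a < a'$ and suppose some $(a'', b) \notin \cl$ with $a < a'' < a'$. After invoking vertical orderability, I replace $\cl$ by its vertical reordering (which remains exceptional on the same underlying set) and aim to insert $(a'', b)$ between $(a,b)$ and $(a',b)$ in that sequence, verifying exceptionality is preserved.

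First I would make the following structural observation: in the vertical order, no $\cl_k$ lies strictly between $(a,b)$ and $(a',b)$. Indeed, such a $\cl_k$ would have to satisfy $b_k = b$ and $a < a_k < a'$, but then $(a_k,b) = \cl_k$ would contradict the gap hypothesis. Hence inserting $(a'',b)$ at the slot immediately after $(a,b)$ splits $\cl$ into elements $\cl_i \leq (a,b)$ and elements $\cl_j \geq (a',b)$ (in vertical order), and the exceptionality check only requires verifying $(a'',b) - \cl_i \in \nImm$ for all $\cl_i \leq (a,b)$ and $\cl_j - (a'',b) \in \nImm$ for all $\cl_j \geq (a',b)$.

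The main tool for both verifications is the horizontal integral convexity of $\nImm$ from Remark~\ref{rem:xaxb}(ii). For any $\cl_i$ strictly preceding $(a,b)$ in vertical order, both $(a,b) - \cl_i$ and $(a',b) - \cl_i$ lie in $\nImm$ by exceptionality of $\cl$, and they sit on the common horizontal line $[y = b - b_i]$; so their horizontal interpolation $(a''-a_i,\, b-b_i) = (a'',b) - \cl_i$ also lies in $\nImm$. The symmetric argument handles $\cl_j \geq (a',b)$ strictly. The two remaining edge cases are $\cl_i = (a,b)$ itself, where $(a'',b) - (a,b) = (a''-a,0)$, and $\cl_j = (a',b)$, where $(a',b) - (a'',b) = (a'-a'',0)$. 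In both, the first coordinate lies strictly between $0$ and $\ell_1$, and an inspection of the parallelogram description of $\cP$ in Subsection~\ref{subsec:ImmLoc} shows $(k,0) \in \cP \subseteq \nImm$ whenever $0 < k < \ell_1$ (the inequalities $-\xb < k < \ell_1$ and $0 < k < \ell_1 + \xc$ are trivially satisfied, using $\xb, \xc \geq 0$).

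Assembling the pieces, the enlarged sequence is exceptional in vertical order, so the underlying set $\{\cl_0,\ldots,\cl_N\} \cup \{(a'',b)\}$ underlies an exceptional sequence and strictly contains the set underlying $\cl$, contradicting non-extendability. I expect the only subtle step to be the careful bookkeeping of the ``boundary'' cases $\cl_i = (a,b)$ and $\cl_j = (a',b)$, since the convexity argument alone does not apply there and one must fall back on the explicit shape of $\cP$; the rest is a direct application of horizontal convexity.
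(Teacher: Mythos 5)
Your proposal follows essentially the same route as the paper: insert the missing point and verify exceptionality of the enlarged sequence via horizontal integral convexity of $\nImm$, contradicting non-extendability. Your explicit treatment of the boundary cases $\cl_i=(a,b)$ and $\cl_j=(a',b)$ via the shape of $\cP$ is in fact slightly more careful than the paper's, which invokes convexity even when one endpoint of the interpolation is the origin.

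One step is stated incorrectly, though it is immediately repairable. You claim that no $\cl_k$ lies strictly between $(a,b)$ and $(a',b)$ in the vertical order because such a $\cl_k=(a_k,b)$ with $a<a_k<a'$ ``would contradict the gap hypothesis.'' It would not: the hypothesis is only that \emph{some} $(a'',b)$ with $a<a''<a'$ is missing, not that all intermediate lattice points are. As written, the insertion slot ``immediately after $(a,b)$'' may then sit before genuine elements $(a_k,b)\in\cl$ with $a_k<a''$, and the required membership $(a_k-a'',0)\in\nImm$ fails since $a_k-a''<0$. The fix is to first shrink the interval: replace $(a,b)$ and $(a',b)$ by the two elements of $\cl\cap[y=b]$ immediately bracketing $a''$ (equivalently, by a consecutive pair $\cl_i,\cl_{i+1}$ of the vertically ordered sequence with a gap between them, which is exactly the normalization the paper performs). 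With that adjustment your argument goes through verbatim.
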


\begin{proof}
Reordering if necessary we may assume 
that $\cl$ is vertically ordered. 
Suppose we have $\cl_i=(a_i,b)$, 
$\cl_{i+1}=(a_{i+1},b)$ with $a_i+2\leq a_{i+1}$. 
Then we can choose a point $(a,b)\in\Z^2$ and $a_i<a<a_j$.
We define a new sequence $\cl'$ by
$$
\cl'_k:=\left\{\begin{array}{ll}
\cl_k & \mbox{if }k\leq i\\
(a,b) & \mbox{if }k=i+1\\
\cl_{k-1} & \mbox{if }k\geq i+2
\end{array}\right..
$$

\medskip

We show that $\cl'$ is exceptional, contradicting the 
nonextendability of $\cl$. Consider the vectors 
$\vect{\cl'_k\cl'_{k+r}}=\cl'_{k+r}-\cl'_k$ 
for $k\geq 1$. If none of the indices $k$ or $k+r$ 
equals $i+1$, then $\cl'_k$ and $\cl'_{k+r}$ belong to 
the original sequence $\cl$, whence 
$\vect{\cl'_k\cl'_{k+r}}\in\nImm$.

\medskip

If $k\leq i$, consider $\cl'_{i+1}-\cl'_k=(a,b)-\cl_k$ 
which sits between $\cl_i-\cl_k$ and $\cl_{i+1}-\cl_k$ 
on the same horizontal line. Hence $(a,b)-\cl_k\in\nImm$ 
by horizontal integral convexity, cf.\ 
\ssect{subsec:ImmLoc}.

\medskip

Finally, if $k\geq i+1$, we can reason as before
and conclude that the difference $\cl'_k-\cl'_{i+1}$ 
belongs to $\nImm$.  
\end{proof}

{Since by Proposition~\ref{prop:RelCrit} any 
pair of adjacent elements with $b_i-b_{i+1}<\ell_2$ can be switched we 
obtain the following}

\begin{lemma}
\label{lem:ReorderJump}
Let $\cl=(\cl_0,\ldots,\cl_N)$ be an 
exceptional sequence on $(\ell_1,\ell_2;c)$ 
which is not vertically orderable. Then we can 
reorder $\cl$ in such a way that the new 
sequence is still exceptional and there is a 
consecutive pair $\cl_i<\cl_{i+1}$ with 
$b_i-b_{i+1}\geq\ell_2$. 
\end{lemma}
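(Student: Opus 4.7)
The plan is to perform a modified bubble sort on $\cl$ with respect to the vertical lexicographical order, swapping only those adjacent pairs whose reversal preserves exceptionality. By Proposition~\ref{prop:RelCrit}, non-orderability means some (possibly non-consecutive) pair $\cl_i<\cl_j$ satisfies $b_i-b_j\geq\ell_2$; the sort will be forced to halt by exposing such a downward jump between adjacent positions.

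First, I identify which adjacent swaps are legal. By Lemma~\ref{lem:HorOrd}, $b_i=b_{i+1}$ forces $a_i<a_{i+1}$, so the only way an adjacent pair in sequence order can be out of vertical lex order is $b_i>b_{i+1}$. Exceptionality of $\cl$ gives $\cl_{i+1}-\cl_i\in\nImm$; since its $y$-coordinate is negative it must lie in the parallelogram $\cP$. If in addition $b_i-b_{i+1}\leq\ell_2-1$, then the reverse difference $\cl_i-\cl_{i+1}$ has $y$-coordinate in $(0,\ell_2)$, hence lies in the horizontal strip $\cH\subseteq\nImm$. Swapping positions $i$ and $i+1$ leaves every other pairwise difference in the sequence unchanged, so the resulting sequence is still exceptional.

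Second, I run the sort: scan for an adjacent pair $\cl_i,\cl_{i+1}$ in sequence order with $b_i>b_{i+1}$. If $b_i-b_{i+1}\geq\ell_2$, stop and output this pair -- it realises the desired consecutive downward jump. Otherwise swap, which by the previous step preserves exceptionality while strictly decreasing the number of vertical lex inversions. The procedure therefore terminates after finitely many iterations.

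Finally, I rule out termination in a fully lex-sorted configuration. A standard minimality argument shows that a sequence without any adjacent inversion has no inversion at all, i.e., coincides with the vertical lex-ordering of the underlying set $\{\cl_0,\ldots,\cl_N\}$. If the procedure terminated in this state with an exceptional sequence, $\cl$ would be vertically orderable, contradicting the hypothesis. Hence the bubble sort must instead halt by producing a consecutive pair with $b_i-b_{i+1}\geq\ell_2$, as required. The only mildly delicate point is checking that an adjacent swap preserves exceptionality of every other pair, but this is automatic since the relative order of positions outside $\{i,i+1\}$ is unchanged.
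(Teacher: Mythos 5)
Your proof is correct, and it rests on the same key mechanism as the paper's: the observation (lifted from the proof of Proposition~\ref{prop:RelCrit}) that an adjacent downward jump of height at most $\ell_2-1$ can be transposed without destroying exceptionality, because the reversed difference then lands in the horizontal strip $\cH$. The organisation differs, though. The paper fixes one offending pair $\cl_i<\cl_j$ with $b_j-b_i\leq-\ell_2$, shrinks $j$ so that all intermediate elements satisfy $b_k-b_i\geq-\ell_2+1$, and then permutes the block $\cl_i,\ldots,\cl_{j-1}$ so that $\cl_j$ becomes the immediate successor of $\cl_i$. You instead run a global bubble sort toward the vertical lexicographic order, performing only the legal small-jump transpositions, and argue by contradiction: the inversion count strictly drops at each swap, so the process terminates, and it cannot terminate in the fully sorted state because that would exhibit $\cl$ as vertically orderable; hence it must halt at an adjacent pair with $b_i-b_{i+1}\geq\ell_2$. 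Your formulation has the advantage that it never needs to verify that a \emph{specific} element can be moved past each of the intermediate terms -- a point the paper's one-line permutation step leaves implicit -- at the cost of not identifying in advance which pair of the original sequence realises the jump. Both arguments are complete for the purposes of the lemma, since only the existence of such a consecutive pair in \emph{some} exceptional reordering is needed.
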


\begin{remark}
\label{rem:RelProdCase}
For $c=0$ the $\Z^2$-involution $(a,b)\mapsto(b,a)$ 
maps any $(\ell_1,\ell_2;0)$-exceptional sequence 
to an $(\ell_2,\ell_1;0)$-exceptional sequence. In 
particular, it follows that $\cl$ is horizontally 
orderable if and only if
\begin{equation}\label{eq:RelBound2}
a_j-a_i>-\ell_1
\end{equation}
for all $0\leq i<j\leq N$. Similarly,
Lemma~\ref{lem:HorOrd},~\ref{lem:ReorderJump}
and Corollary~\ref{coro:NoGaps}
hold mutatis mutandis.
\end{remark}

\medskip

\subsection{Orderable varieties}
\label{subsec:RelVar}
For $c\not=0$ we define the {\em integral depth} 
of $\nImm$ as the smallest integer $\hZ$ such that 
the line $[y=\hZ]$ meets $\nImm$, namely
\label{page:IntDepth}
\begin{equation}\label{eq:IntDepth}
\hZ=-\Big\lfloor{\frac{\ell_1-2}{\xa}}\Big\rfloor. 
\end{equation}
Proposition~\ref{prop:RelCrit} immediately 
implies the

\begin{corollary}\label{coro:RelVar}
If $\hZ\geq1-\ell_2$, then any exceptional 
sequence on $(\ell_1,\ell_2;c)$ with $c\not=0$ 
can be vertically ordered. 
\end{corollary}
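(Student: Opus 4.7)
The plan is to deduce the corollary directly from Proposition~\ref{prop:RelCrit}, which characterises vertical orderability of an exceptional sequence $\cl=(\cl_0,\ldots,\cl_N)$ by the combinatorial bound $b_j-b_i>-\ell_2$ for all $0\leq i<j\leq N$. It therefore suffices to show that, under the hypothesis $\hZ\geq 1-\ell_2$, every difference $\cl_j-\cl_i\in\nImm$ automatically satisfies this bound on its $y$-coordinate.

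To this end I would decompose $\nImm=\cH\cup\cP$ as in \ssect{subsec:ImmLoc} and check each piece separately. Points of the horizontal strip $\cH$ have $y$-coordinate in $\{1,\ldots,\ell_2-1\}$, so the inequality $b>-\ell_2$ is trivially satisfied there. For points of the parallelogram $\cP$, the very definition of the integral depth -- the smallest integer for which $[y=\hZ]$ meets $\nImm$ -- guarantees $b\geq\hZ$ for any $(a,b)\in\cP$. Under the hypothesis this gives $b\geq\hZ\geq 1-\ell_2$, hence again $b>-\ell_2$.

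Combining the two cases, every element of $\nImm$ has $y$-coordinate strictly greater than $-\ell_2$, and in particular so does every difference $\cl_j-\cl_i$ arising from an exceptional sequence on $(\ell_1,\ell_2;c)$. The criterion of Proposition~\ref{prop:RelCrit} is then met and vertical orderability follows. There is no real obstacle here: the statement is essentially a reformulation of Proposition~\ref{prop:RelCrit} under a numerical condition that rules out ``deep'' points of $\cP$; the only minor care needed is to notice that for $c\neq 0$ the integral depth $\hZ$ is always non-positive (since $\hZ=-\lfloor(\ell_1-2)/\xa\rfloor$ with $\xa\geq 1$ and $\ell_1\geq 2$), so that the hypothesis $\hZ\geq 1-\ell_2$ genuinely controls $\nImm$ below the horizontal strip rather than being vacuous.
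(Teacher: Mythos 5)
Your proposal is correct and is exactly the argument the paper intends: the paper merely states that Corollary~\ref{coro:RelVar} follows ``immediately'' from Proposition~\ref{prop:RelCrit}, and your spelled-out verification (every point of $\cH$ has $y$-coordinate in $\{1,\ldots,\ell_2-1\}$, every point of $\cP$ has $y$-coordinate at least $\hZ\geq 1-\ell_2$ by the definition of integral depth, hence every difference $\cl_j-\cl_i\in\nImm$ satisfies $b_j-b_i>-\ell_2$) is precisely the intended reasoning. Your closing remark that $\hZ\leq 0$ for $c\neq 0$ is a correct and harmless sanity check.
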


We call the toric variety $(\ell_1,\ell_2;c)$ 
itself {\em vertically orderable} if every 
exceptional sequence is vertically orderable. 
\ssect{pic2:can} and the basic inequality~\eqref{eq:EffIneq} 
immediately imply the
 
\begin{proposition}
\label{prop:RelVar}
A toric variety $(\ell_1,\ell_2;c)$ with 
$c\not=0$ and $\xb\geq\ell_1-2$ is vertically 
orderable. 
\end{proposition}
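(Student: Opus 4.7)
The plan is to reduce the statement directly to Corollary~\ref{coro:RelVar}. By that corollary it suffices to prove that, under the standing hypotheses $c\neq 0$ and $\xb\geq\ell_1-2$, the integral depth of $\nImm$ satisfies $\hZ\geq 1-\ell_2$.

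Using the explicit formula~\eqref{eq:IntDepth}, namely $\hZ=-\lfloor(\ell_1-2)/\xa\rfloor$, this inequality is equivalent to
$$
\Big\lfloor\frac{\ell_1-2}{\xa}\Big\rfloor\leq\ell_2-1,
$$
which in turn is equivalent to the strict inequality $\ell_1-2<\xa\ell_2$. So the whole task is to produce this one inequality from the assumptions.

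The combinatorial input is the basic inequality~\eqref{eq:EffIneq} of Remark~\ref{rem:effective}, which provides the upper bound $\xb\leq\xa(\ell_2-1)=\xa\ell_2-\xa$. Combined with the hypothesis $\xb\geq\ell_1-2$ this yields $\ell_1-2\leq\xa\ell_2-\xa$. The condition $c\neq 0$ then enters in the form $\xa=-\cc\geq 1$, so that adding $\xa\geq 1$ to both sides gives $\ell_1-1\leq\xa\ell_2$, hence in particular the strict inequality $\ell_1-2<\xa\ell_2$ we were after.

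The only delicate point is the strictness: if one had $\xa=0$ (i.e.\ $c=0$) the argument would break, and if one merely had $\xb\geq\ell_1-2$ without $c\neq 0$ one would not get the crucial extra unit that turns $\leq$ into $<$. Once this is noted, the proof is a one-line chain of inequalities and no further obstacle remains.
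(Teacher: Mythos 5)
Your proof is correct and follows essentially the same route as the paper, which derives the proposition ``immediately'' from the basic inequality~\eqref{eq:EffIneq} together with Corollary~\ref{coro:RelVar}: the chain $\ell_1-2\leq\xb\leq\xa(\ell_2-1)$ forces $\lfloor(\ell_1-2)/\xa\rfloor\leq\ell_2-1$, i.e.\ $\hZ\geq1-\ell_2$. Your extra remark on where $c\neq0$ (hence $\xa\geq1$) enters is a useful clarification of a step the paper leaves implicit.
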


This holds, for instance, for varieties of 
dimension less than or equal to three 
as well as for all non-Fano varieties. 

\medskip

\subsection{Existence}
\label{subsec:ExChain}
To generalise the generation strategy 
from \ssect{subsec:GenDerCat} we make 
the following

\begin{definition}
\label{def:HorChain}
A {\em horizontal chain} in $\Z^2$ is 
a subset of the form 
$$
(a,b)+\{(0,0),\ldots,(\ell_1-1,0)\}.
$$
Similarly, a {\em vertical chain} is 
a subset of the form 
$(a,b)+\{(0,0),\ldots,(0,\ell_2-1)\}$.
\end{definition}

\begin{remark}
Thinking of $\Z^2$ as the Picard group 
$\Pic(\ell_1,\ell_2;c)$ we can fill, that is, 
generate in the derived category any 
line $[y=b]$ which contains a horizontal 
chain via the Beilinson 
sequence~\eqref{eq:BeilSeq} 
in \ssect{subsec:FullCond}. The product
case also requires vertical chains.
\end{remark}

Given an exceptional sequence $\cl$ we 
quantify its ``spatial size'' as follows. 
We let 
$$
\underline a:= \underline a(\cl):=
\min\{a\mid (a,b)\in\cl\},\quad
\underline b:=\underline b(\cl):=
\min\{b\mid(a,b)\in\cl\}
$$
and
$$
\overline a:=\overline a(\cl):=
\max\{a\mid (a,b)\in\cl\},\quad
\overline b:=\overline b(\cl):=
\max\{b\mid(a,b)\in\cl\},
$$
The {\em height} and the {\em width} 
of $\cl$ are then defined by 
$$
H(\cl):=\overline b-\underline b+1
\quad\mbox{and}\quad W(\cl):=
\overline a-\underline a+1.
$$

\begin{proposition}
\label{prop:CompShape}
Let $\cl$ be a {\mes} on
$(\ell_1,\ell_2;c)$ and
$H(\cl)\leq2\ell_2$. For all integers $b$
with $0\leq b-\underline b<\ell_2$, the sets
$$
Y_{b+\ell_2}:=(-\xb,\ell_2)+([y=b]\cap\cl)
\hspace{0.8em}\mbox{and}\hspace{0.8em}
X_{b+\ell_2}:=[y=b+\ell_2]\cap\cl
$$
define the horizontal chain 
$$
S_{b+\ell_2}:=Y_{b+\ell_2}\cup X_{b+\ell_2}\subseteq\Z^2.
$$
In particular, $Y_{b+\ell_2}\cap X_{b+\ell_2}=\varnothing$ 
and $\sharp S_{b+\ell_2}=\ell_1$.
\end{proposition}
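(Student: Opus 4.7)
The plan is to combine the bijection $\Phi\colon\cl\to\CT$ of Lemma~\ref{lem:PhiMap} (counting argument) with the exceptionality of $\cl$ (confinement argument), thereby showing that $S_{b+\ell_2}$ consists of $\ell_1$ consecutive lattice points at height $b+\ell_2$.

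\emph{Step 1 (Cardinality).} Projection on the second coordinate descends to a surjection $\CT\to\Z/\ell_2\Z$, so each residue class modulo $\ell_2$ is hit by exactly $\ell_1=|\CT|/\ell_2$ classes of $\CT$. Since $\Phi|_\cl$ is bijective, exactly $\ell_1$ elements of $\cl$ have second coordinate congruent to $b\pmod{\ell_2}$. The hypothesis $H(\cl)\leq 2\ell_2$ together with $\underline b\leq b<\underline b+\ell_2$ forces the only rows of $\cl$ at heights $\equiv b\pmod{\ell_2}$ to be $b$ and $b+\ell_2$ (rows $b-k\ell_2$ with $k\geq 1$ lie strictly below $\underline b$, and rows $b+k\ell_2$ with $k\geq 2$ strictly above $\overline b$). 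Hence $|\cl\cap[y=b]|+|X_{b+\ell_2}|=\ell_1$.

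\emph{Step 2 (Disjointness).} If $(a,b+\ell_2)\in Y_{b+\ell_2}\cap X_{b+\ell_2}$, then both $(a+\xb,b)$ and $(a,b+\ell_2)$ lie in $\cl$ and differ by $(-\xb,\ell_2)\in L$, contradicting the injectivity of $\Phi|_\cl$. Thus $Y_{b+\ell_2}\cap X_{b+\ell_2}=\varnothing$ and $\sharp S_{b+\ell_2}=\ell_1$.

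\emph{Step 3 (Consecutiveness).} By Lemma~\ref{lem:HorOrd}, the first coordinates within the single row $X_{b+\ell_2}$, and likewise within $Y_{b+\ell_2}$ (whose first coordinates are those of $\cl\cap[y=b]$ shifted by $-\xb$), span at most $\ell_1-1$. It remains to rule out a ``large gap'' across the two sets: no pair of first coordinates $s_1<s_2$ of $S_{b+\ell_2}$ with $s_1,s_2$ lying in different sets among $\{X_{b+\ell_2},Y_{b+\ell_2}\}$ can satisfy $s_2-s_1\geq\ell_1$. Granting this, together with Steps~1--2, the $\ell_1$ distinct first coordinates of $S_{b+\ell_2}$ fit in a window of length $\ell_1-1$ and hence form $\ell_1$ consecutive integers, i.e.\ a horizontal chain.

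Suppose such a pair exists, with $p=s_1$, $q=s_2$, $q-p\geq\ell_1$. Unpacking the definitions of $X_{b+\ell_2}$ and $Y_{b+\ell_2}$, the two sub-cases produce elements of $\cl$ at heights $b$ and $b+\ell_2$ whose two possible difference vectors are of the form $\pm\bigl(q-p\pm\xb,\;\mp\ell_2\bigr)$. The horizontal strip $\cH$ is excluded immediately by the second coordinate $\pm\ell_2$. Testing each of the four candidates against $\cP$ and using the basic inequality $\xb\leq\xa\ell_2$ from Remark~\ref{rem:effective}, a short calculation shows that $q-p\geq\ell_1$ always violates one of the defining inequalities of $\cP$: the constraint $-\xb<a<\ell_1$ for the differences with negative second coordinate, or $0<a+\xa b<\ell_1+\xc$ for those with positive second coordinate. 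Hence no difference lies in $\nImm$, contradicting the exceptionality of $\cl$.

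The main obstacle is Step~3, specifically the four-case check against $\cP$; each individual inequality test is brief, but the bookkeeping must be carried out carefully, and the basic inequality is essential in every case.
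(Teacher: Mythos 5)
Your proof is correct and follows essentially the same route as the paper: Lemma~\ref{lem:PhiMap} together with the height bound gives cardinality $\ell_1$ and disjointness, and the defining inequalities of $\cP$ combined with $\xc\geq0$ exclude any mixed pair at horizontal distance $\geq\ell_1$ (the paper packages this step as an explicit order characterisation of mixed pairs rather than a contradiction, but the computation is identical). One small nit: in the sub-case where the $X$-point lies to the left of the $Y$-point, the inequality that fails for the candidate with positive second coordinate is $-\xb<a$ rather than $0<a+\xa b$ (the latter can hold when $\xc$ is large), but the exclusion itself is valid, so this does not affect the argument.
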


\begin{proof}
Consider the lattice $L$ with associated 
map $\Phi:\Z^2\to\CT$ from \ssect{subsec:AssLat}. By 
Lemma~\ref{lem:PhiMap} its restriction 
to $\cl$ is bijective. 
We denote by 
$[a,b]=\Phi(a,b)$ the equivalence class 
of $(a,b)\in\cl$ in $\CT$. 

\medskip

To ease notation we assume that 
$\underline b=0$. Since $H(\cl)\leq2\ell_2$, 
$0\leq b\leq2\ell_2-1$ whenever
$(a,b)\in\cl$. Hence, the $\ell_1$ 
classes $[a_1,b],\ldots,[a_{\ell_1},b]$ 
come either from $[y=b]\cap\cl$ or 
$[y=b+\ell_2]\cap\cl$. In particular, 
the union $Y_{b+\ell_2}\cup X_{b+\ell_2}$ 
is disjoint and has precisely $\ell_1$ elements. 

\medskip

Let $Y_b':=\{y\in\Z\mid (y,b+\ell_2)\in Y_{b+\ell_2}\}$ 
and $X_b':=\{x\in\Z\mid (x,b+\ell_2)\in X_{b+\ell_2}\}$. 
Since both sets are disjoint, we can define
$$
\cl(z):=\left\{\begin{array}{ll}
(y+\xb,b)& \mbox{if }z=y\in Y_b'
\\
(x,b+\ell_2)& \mbox{if }z=x\in X_b'
\end{array}\right.
$$
which is just the element of $\cl$ 
giving rise to $z$. By Lemma~\ref{lem:HorOrd},
$x$, $x'\in X_b'$ satisfy $\cl(x)<\cl(x')$ 
if and only if $x<x'$, and similarly for $Y_b'$. 
A more involved characterisation holds for
mixed pairs $(x,y)\in X_b'\times Y_b'$, namely
$$
\cl(y)<\cl(x) \ifff 0<x-y<\ell_1
\hspace{0.8em}\mbox{and}\hspace{0.8em}
\cl(x)<\cl(y) \ifff \xc<y-x<\ell_1-\xb.
$$
Indeed, $\cl(y)<\cl(x)$ if and only if 
$(x-y-\xb,\ell_2)\in\nImm(\ell_1,\ell_2;\xa,\xb)$.
Further, $\cl(x)<\cl(y)$ if and only if
$(y+\xb-x,-\ell_2)\in\nImm(\ell_1,\ell_2;\xa,\xb)$. 
The claim follows from the inequalities defining
the {co-immaculate} locus as well as $\xc\geq0$, cf.\ 
Remark~\ref{rem:effective}. In particular, 
$Y_b'\cup X_b'$ forms a sequence of $\ell_1$ 
consecutive integers.
\end{proof}

\begin{remark}
\label{rem:LR}
If in addition $c\not=0$, the proof of 
Proposition~\ref{prop:CompShape} 
also yields a horizontal no-gap lemma
for $Y_{b+\ell_2}\cup X_{b+\ell_2}$
without assuming that the {\mes} $\cl$ is orderable,
compare Corollary~\ref{coro:NoGaps}.
Moreover, it implies for this case that $Y_{b+\ell_2}$ lies 
to the left of $X_{b+\ell_2}$.
Indeed, if there exists $x\in X_b'$ 
with $y=x+1\in Y_b'$, then
$1=y-x>\xc\geq 1$, contradiction.
\end{remark}

{For the sequel we say that an element $\cl_i$ 
of $\cl$ is at {\em level $h$}, if $\cl_i=(a,\underline b+h)$.}

\begin{corollary}
\label{coro:ExHorChains}
Let $\cl$ be a {\mes} with 
$\nl:=2\ell_2-H(\cl)>0$. 
Then there {exist} 
horizontal chains in $\cl$ at level 
$\ell_2-\nl,\ldots,\ell_2-1$. 
\end{corollary}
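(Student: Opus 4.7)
The plan is to deduce the corollary directly from Proposition~\ref{prop:CompShape} by exploiting the fact that, at sufficiently high levels, the set $X_{b+\ell_2}$ must be empty, so the full horizontal chain $S_{b+\ell_2}$ is forced to come entirely from $Y_{b+\ell_2}$, which in turn forces the fiber $[y=b]\cap\cl$ itself to be a chain. Without loss of generality we may assume $\underline{b}=0$, so that $\overline{b}=H(\cl)-1=2\ell_2-\nl-1$.

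First I would check the preliminary estimate $\ell_2-\nl\geq 0$, i.e.\ $H(\cl)\geq\ell_2$. This is immediate from Lemma~\ref{lem:HorOrd}, since each horizontal line contains at most $\ell_1$ elements of $\cl$, and $\cl$, being maximal, has $\ell_1\ell_2$ elements in total; hence $\cl$ spreads over at least $\ell_2$ lines. In particular, for each $b\in\{\ell_2-\nl,\ldots,\ell_2-1\}$ the integer $b$ lies in the range $\{0,\ldots,\ell_2-1\}$, so Proposition~\ref{prop:CompShape} applies and produces the horizontal chain $S_{b+\ell_2}=Y_{b+\ell_2}\cup X_{b+\ell_2}$ of cardinality $\ell_1$ on the line $[y=b+\ell_2]$.

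Next I would observe that for such $b$, the level $b+\ell_2$ satisfies $b+\ell_2\geq 2\ell_2-\nl>\overline{b}$, and therefore $X_{b+\ell_2}=[y=b+\ell_2]\cap\cl=\varnothing$. Consequently $S_{b+\ell_2}=Y_{b+\ell_2}=(-\xb,\ell_2)+([y=b]\cap\cl)$. Since $S_{b+\ell_2}$ is a horizontal chain of $\ell_1$ consecutive points and translation by $(-\xb,\ell_2)$ preserves the property of being a horizontal chain, the set $[y=b]\cap\cl$ itself is a horizontal chain of $\ell_1$ consecutive points on the line $[y=b]$. As $b$ runs over $\{\ell_2-\nl,\ldots,\ell_2-1\}$, we obtain the asserted horizontal chains at levels $\ell_2-\nl,\ldots,\ell_2-1$.

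There is essentially no hard step in this argument: all the combinatorial work was done in Proposition~\ref{prop:CompShape}, and the only subtlety is the bookkeeping that guarantees both that the levels under consideration lie in the range where Proposition~\ref{prop:CompShape} applies and that the corresponding $X_{b+\ell_2}$ are empty. The inequality $\nl\leq\ell_2$, justified by the pigeonhole estimate on horizontal lines, is the small but essential ingredient making the two ranges fit together.
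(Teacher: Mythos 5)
Your proof is correct and follows essentially the same route as the paper's: normalise $\underline b=0$, observe that the lines $[y=2\ell_2-\nl],\ldots,[y=2\ell_2-1]$ miss $\cl$, and conclude from Proposition~\ref{prop:CompShape} that the chains $S_{b+\ell_2}$ must come entirely from the shifted fibres $Y_{b+\ell_2}$. Your explicit verification that $\ell_2-\nl\geq 0$ (via the pigeonhole bound from Lemma~\ref{lem:HorOrd}) is a small piece of bookkeeping the paper leaves implicit, but it is a welcome addition rather than a divergence.
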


\begin{proof}
{The assertion is invariant under shifts 
so we may assume that $\underline b=\underline b(\cl)=0$. So if 
$(a,b)\in\cl$, then $\underline b=0\leq b\leq\overline b=
2\ell_2-\nl-1$. Therefore,  
$[y=2\ell_2-\nl]\cap\cl,\ldots,[y=2\ell_2-1]
\cap\cl=\varnothing$. By 
Proposition~\ref{prop:CompShape} we must have 
horizontal chains in $[y=\ell_2-\nl]\cap\cl,
\ldots,[y=\ell_2-1]\cap\cl$.}
\end{proof}

The converse statement also holds.
For this, let
$$
\DeltaUp:=\{(a,b)\in\cP\kst b\geq\ell_2\}
=\{(a,b)\in\Z^2\mid b\geq\ell_2,\; -\xb<a
\mbox{ and }a+b\xa<\ell_1+\xc\}.
$$
Note that the inequalities $b\geq\ell_2$ 
and $-\xb<a$ alone already imply that 
$a+b\xa>\xc$. The subsequent lemma states, 
roughly speaking, that a point of $\cl$ 
at sufficiently high level prevents horizontal 
chains in $\cl$.

\begin{lemma}%
\label{lem:HeightConstraint}
Let $\cl$ be an exceptional sequence 
on $(\ell_1,\ell_2;c)$ which is
vertically orderable. If $(a'',b'')\in\cl$, 
then for all pairs $(a,b)<(a',b)$ 
with $b''-b\geq\ell_2$ we have 
$a'-a<\ell_1-1$ (instead of $\leq\ell_1-1$
as asserted in Lemma~\ref{lem:HorOrd}).
\end{lemma}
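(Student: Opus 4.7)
My plan is a short argument by contradiction. Assume $a'-a=\ell_1-1$, which is the extremal value permitted by Lemma~\ref{lem:HorOrd}. Since $\cl$ is vertically orderable, I would first reorder it into vertical order while preserving exceptionality. Because $b''-b\geq\ell_2>0$, the point $(a'',b'')$ then sits strictly after both $(a,b)$ and $(a',b)$ in this reordering, so exceptionality forces both difference vectors
\[
\vect{(a,b)\,(a'',b'')}=(a''-a,\,b''-b)
\quad\text{and}\quad
\vect{(a',b)\,(a'',b'')}=(a''-a-(\ell_1-1),\,b''-b)
\]
to lie in $\nImm$.

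Since $b''-b\geq\ell_2$, neither vector lies in the horizontal strip $\cH$, so both must belong to the parallelogram $\cP$. Reading off the defining inequalities of $\cP$, together with the identity $\xc=\xa\ell_2-\xb$, the left-boundary condition $-\xb<a''-a'$ applied to the second vector gives
\[
a''-a>\ell_1-1-\xb,
\]
while the upper skew inequality $(a''-a)+(b''-b)\xa<\ell_1+\xc$ applied to the first vector, combined with $b''-b\geq\ell_2$ and $\xa\geq0$, yields
\[
a''-a<\ell_1-\xb+\xa\bigl(\ell_2-(b''-b)\bigr)\leq\ell_1-\xb.
\]
Together these trap the integer $a''-a$ strictly between two consecutive integers, the desired contradiction.

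The only real subtlety is that the argument relies on having both $(a,b)$ and $(a',b)$ precede $(a'',b'')$ in the sequence: this is not automatic from the original indexing, but it follows from vertical orderability, which allows us to replace $\cl$ by its vertical reordering without losing exceptionality. I do not foresee any other obstacle; the computation reduces to a one-line interval squeeze once the correct two inequalities of $\cP$ are selected, and the product case $c=0$ (where $\xa=\xb=\xc=0$ and $\cP$ degenerates to a vertical strip) goes through identically, the squeeze then becoming $\ell_1-1<a''-a<\ell_1$.
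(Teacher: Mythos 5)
Your proof is correct and takes essentially the same route as the paper's: after vertical reordering, both difference vectors to $(a'',b'')$ must land in $\DeltaUp$, the portion of $\cP$ at height $\geq\ell_2$, which is horizontally confined to the open strip $-\xb<a<\ell_1-\xb$ of width $\ell_1$. The paper states this confinement once and reads off the conclusion, while you extract the same two boundary inequalities at the extremal value $a'-a=\ell_1-1$ and obtain the integer squeeze $\ell_1-1-\xb<a''-a<\ell_1-\xb$; the content is identical.
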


\begin{proof}
From the definition of the {co-immaculate} 
locus it follows that 
$$
\DeltaUp\subseteq\{(a,b)\in\Z^2\mid-\xb
<a<\ell_1-\xb\}.
$$
Reordering vertically if necessary 
we have $(a,b)<(a'',b'')$ for any 
$(a,b)\in\cl$ hence $[y=b]\cap\cl$ 
is contained in
$$
(a'',b'')-\DeltaUp\subseteq\{(a,b)\in
\cP\mid\xb+a''-\ell_1<a<\xb+a''\}.
$$
Consequently, $a'-a<\ell_1-1$.
\end{proof}

If an exceptional sequence $\cl$ 
starts at $\cl_0=0$, any horizontal chain 
in $\cl$ must be necessarily located at 
level $h$ with $0\leq h\leq\ell_2-1$. 
We therefore immediately deduce the

\begin{corollary}
\label{coro:SlimTwisted}
Let $\cl$ be an exceptional sequence 
on $(\ell_1,\ell_2;c)$ which is
vertically orderable. 
If there exists a horizontal chain in $\cl$ 
at level $h$, then $H(\cl)\leq\ell_2+h\leq
2\ell_2-1$.
\end{corollary}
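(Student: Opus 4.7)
The corollary splits into two inequalities, $H(\cl) \leq \ell_2 + h$ and $h \leq \ell_2 - 1$, and I will treat them in turn.

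For the first, Lemma~\ref{lem:HeightConstraint} does almost all of the work. Writing $b^* = \underline{b} + h$ for the $y$-coordinate of the chain, its extreme points $(a, b^*)$ and $(a + \ell_1 - 1, b^*)$ belong to $\cl$ and realise a horizontal spread of exactly $\ell_1 - 1$. If $\overline{b} \geq b^* + \ell_2$, we may pick some $(a'', \overline{b}) \in \cl$ and apply Lemma~\ref{lem:HeightConstraint} to the chain-extreme pair together with $(a'', \overline{b})$; the height gap condition $\overline{b} - b^* \geq \ell_2$ is met, so the lemma forces the impossible strict inequality $\ell_1 - 1 < \ell_1 - 1$. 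Hence $\overline{b} \leq b^* + \ell_2 - 1$, and therefore $H(\cl) = \overline{b} - \underline{b} + 1 \leq h + \ell_2$.

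For the second inequality $h \leq \ell_2 - 1$, I plan to invoke the observation recorded in the paragraph immediately preceding the corollary: whenever $\cl_0 = 0$, every horizontal chain in $\cl$ sits at level $0 \leq h \leq \ell_2 - 1$. Since a global shift of $\cl$ translates $\underline{b}$ and $b^*$ by the same amount, the level $h = b^* - \underline{b}$ is shift-invariant, so we may normalise to $\cl_0 = 0$ and read off the bound. If one prefers a self-contained argument, a ``downward'' analogue of Lemma~\ref{lem:HeightConstraint} works just as well: taking any $\cl_j = (a_j, \underline{b}) \in \cl$ and assuming $h \geq \ell_2$, each difference $((a+i) - a_j,\, h)$ between a chain point and $\cl_j$ would have to lie in $\DeltaUp$, whose $a$-range $(-\xb, \ell_1 - \xb)$ contains only $\ell_1 - 1$ integers---incompatible with the $\ell_1$ distinct values supplied by the chain.

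The only mildly delicate point---what I would call the main obstacle---is recognising that the $\DeltaUp$-argument underlying Lemma~\ref{lem:HeightConstraint} is symmetric in the way the chain sits relative to the witness element: the very same constraint on the $a$-range controls both the ``chain below, witness above'' situation stated verbatim in the lemma and the ``chain above, witness below'' situation needed for the level bound. Once this symmetry is in hand, combining the two inequalities yields $H(\cl) \leq \ell_2 + h \leq 2\ell_2 - 1$.
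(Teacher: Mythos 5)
Your proof is correct and follows essentially the paper's own route: the paper deduces the corollary ``immediately'' from Lemma~\ref{lem:HeightConstraint} together with the assertion, stated just before the corollary, that a horizontal chain in a sequence starting at the origin must sit at level $0\leq h\leq\ell_2-1$ --- and your two steps are exactly these, with the $\DeltaUp$-counting argument (an $a$-range of only $\ell_1-1$ integers against the $\ell_1$ points of a chain) supplying the justification the paper leaves implicit. Nothing is missing.
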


\begin{corollary}
\label{coro:VertOcc}
{Let $\cl$ be a {\mes} 
on $(\ell_1,\ell_2;c)$ which is
vertically orderable. If 
$H(\cl)\leq2\ell_2$, 
then $[y=b]\cap\cl\not=\varnothing$ 
for all $\underline b\leq b\leq\overline b$.
In particular, $H(\cl)=\sharp$ of 
rows occupied by $\cl$.}
\end{corollary}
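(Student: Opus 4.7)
The plan is to argue by contradiction. Suppose there exists $b_0$ with $\underline b<b_0<\overline b$ but $[y=b_0]\cap\cl=\varnothing$. After translating the sequence (which preserves exceptionality, vertical orderability, and $H(\cl)$) we may assume $\underline b=0$, so $0<b_0<\overline b\leq 2\ell_2-1$, and since $\cl$ is vertically orderable we may once and for all reorder it to be vertically ordered. The strategy is to apply Proposition~\ref{prop:CompShape} in two complementary cases to produce a forbidden full horizontal chain sitting too far from another occupied row of $\cl$.

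If $0<b_0<\ell_2$, apply the proposition with $b=b_0$: emptiness of $[y=b_0]\cap\cl$ forces $Y_{b_0+\ell_2}=\varnothing$, so the full chain $S_{b_0+\ell_2}$ of $\ell_1$ points must be carried entirely by $X_{b_0+\ell_2}=[y=b_0+\ell_2]\cap\cl$. This moreover forces $b_0+\ell_2\leq\overline b$, for otherwise $X_{b_0+\ell_2}=\varnothing$ as well, contradicting $|S_{b_0+\ell_2}|=\ell_1$. Symmetrically, if $\ell_2\leq b_0<\overline b$, apply the proposition with $b=b_0-\ell_2$: then $X_{b_0}=\varnothing$, so $Y_{b_0}$ supplies all $\ell_1$ points of $S_{b_0}$, which means $[y=b_0-\ell_2]\cap\cl$ is itself a full horizontal chain.

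In either case we end up with a full horizontal chain of $\ell_1$ elements of $\cl$ on some row $b_\ast$ whose vertical distance from another occupied row of $\cl$ is at least $\ell_2+1$: in Case A the opposite row is $0$, giving $\Delta y = b_0+\ell_2\geq\ell_2+1$; in Case B the opposite row is $\overline b$, giving $\Delta y = \overline b-(b_0-\ell_2)\geq\ell_2+1$. Pick any element $(a_\circ,b_\circ)\in\cl$ on that opposite row; vertical orderability forces the $\ell_1$ forward differences between $(a_\circ,b_\circ)$ and the chain elements (or their reverses, depending on the case) all to lie in $\nImm$. Since $\Delta y>\ell_2$, these differences exit the horizontal strip $\cH$, hence must sit inside the parallelogram $\cP$ (which degenerates to the vertical strip $\{0<a<\ell_1\}$ in the product case). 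A short inspection of the inequalities defining $\cP$ shows that its horizontal cross-section at vertical displacement $\Delta y\geq\ell_2+1$ contains at most $\ell_1-(\Delta y-\ell_2)\xa-1$ lattice points in the twisted case, and exactly $\ell_1-1$ in the product case -- in either case strictly fewer than $\ell_1$. This contradicts the existence of $\ell_1$ distinct chain differences. The ``in particular'' clause then follows immediately, since no empty rows strictly between $\underline b$ and $\overline b$ means $H(\cl)=\overline b-\underline b+1$ equals the number of rows occupied by $\cl$.

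The main technical point is the width count in $\cP$: one needs the displacement $\Delta y$ to strictly exceed $\ell_2$ so that the cross-section drops below $\ell_1$ by at least one lattice point. This is exactly what the choice of opposite row delivers -- $b_0\geq 1$ in Case A and $\overline b>b_0$ in Case B each produce $\Delta y\geq\ell_2+1$. The only mildly delicate verification is that the product case ($\xa=0$) is covered uniformly, which works because in that regime $\cP$ becomes a vertical strip whose horizontal cross-section intrinsically contains only $\ell_1-1$ integer points.
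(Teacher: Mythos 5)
Your proof is correct and follows essentially the same route as the paper: both apply Proposition~\ref{prop:CompShape} to the empty row (split according to whether it lies below or above level $\ell_2$) to force a full horizontal chain of $\ell_1$ points, and then derive a contradiction from the fact that a row of $\cP$ at vertical displacement $\geq\ell_2$ from an occupied row contains fewer than $\ell_1$ lattice points. The only cosmetic difference is that in the second case you re-derive this width bound directly, where the paper simply cites Corollary~\ref{coro:SlimTwisted} (whose proof is that same count).
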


\begin{proof}
{Since the assertion concerns only the
underlying set we may suppose that 
$\cl$ is vertically ordered and $\cl_0=0$.
Assume that
$[y=b]\cap\cl=\varnothing$. If $0\leq b<\ell_2$, 
then $[y=b+\ell_2]\cap\cl$ must have $\ell_1$ 
elements by Proposition~\ref{prop:CompShape}. 
But $\ell_2\leq b+\ell_2$ whence 
$([y=b+\ell_2]\cap\cl)\subseteq\DeltaUp$ which is 
impossible.}

\medskip

{On the other hand, 
$b\geq\ell_2$ implies that 
$[y=b-\ell_2]\cap\cl$ has $\ell_1$ 
elements. By Corollary~\ref{coro:SlimTwisted},
$H(\cl)\leq b$ which contradicts $b\leq H(\cl)-1$.}
\end{proof}

\begin{remark}
For $c=0$ Proposition~\ref{prop:CompShape} and 
Corollaries~\ref{coro:NoGaps}
~\ref{coro:ExHorChains},
~\ref{coro:SlimTwisted} and
~\ref{coro:VertOcc} hold mutatis 
mutandis for the {horizontal} case. 
\end{remark}

\medskip

\subsection{The trivial {\mes}s}
\label{subsec:TrivMes}
In \ssect{subsubsec:HeLexing}
we introduced the standard rectangle
$$
\Rect_{\ell_1,\ell_2}=\{(a,b)\in\Z^2
\mid0\leq a<\ell_1,\,0\leq b<\ell_2\}.
$$
{If the pair $(\ell_1,\ell_2)$ is clear from 
the context we simply write $\Rect$. With respect to 
the vertical lexicographical order there is the maximal exceptional 
sequence given by}
$$
\cl_{a+b\ell_1}:=(a,b)\in\Rect.
$$
Indeed, $\sharp\Rect=\ell_1\ell_2$, 
and the difference of $\cl_i$ and 
$\cl_{i+k}$ sitting in a common row is 
$(k,0)\in\nImm$. In all other cases, the 
difference is of the form $(\cdot,b)$ with 
$b\in\{1,2,\ldots,\ell_2-1\}$. It is thus 
contained in $\nImm(\ell_1,\ell_2;\xa,\xb)$, too.

\medskip

{Furthermore, we obtain {\mes}s by 
\begin{enumerate}
\item[(i)] applying an overall shift $(a,b)$ 
to the entire sequence.
\vspace{1ex}
\item[(ii)] shifting any of the rows at level 
$b=1,\,2,\ldots,\ell_1-1$ by {some} $(a_b,0)\in\Z^2$ 
depending on the level $b$.
\end{enumerate}
The resulting {\mes}s will be referred to as 
{\em vertically trivial}. They are always
vertically orderable and have the maximal number
$z=\ell_2$ of horizontal chains. In 
particular, a {\mes} $\cl$ is vertically 
trivial if and only if $H(\cl)=\ell_2$.}

\medskip

Mutatis mutandis we also have horizontally 
trivial {\mes}s in the product case $c=0$.

\medskip

\section{The dichotomy of the product case}
\label{sec:ProdCase}
Choose two integers $\ell_1$, $\ell_2\geq2$. 
We set out to tackle the Theorems~\thLex-\thFull\ from 
the introduction for the product 
case $(\ell_1,\ell_2;0)=\PP^{\ell_1-1}
\times\PP^{\ell_2-1}$. The reader which
is solely interested in the twisted case
can continue with Section~\ref{sec:MES}.
For the remainder of this section {let}
$\nImm:=\nImm(\ell_1,\ell_2;0,0)$. 

\medskip

\subsection{Exceptional sequences 
are semi-bounded}
\label{subsec:ThmBProdCase}

\begin{theorem}[Theorem {\thHeight}, 
product version]
\label{thm:SlimPaPb}
An exceptional sequence $\cl$ on 
$(\ell_1,\ell_2;0)$ is semi-bounded, 
that is, we have either $H(s)\leq2\ell_2-1$ 
or $W(s)\leq2\ell_1-1$.
\end{theorem}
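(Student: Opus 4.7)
My plan is to prove the contrapositive: assuming $H(\cl)\geq 2\ell_2$, I will deduce $W(\cl)\leq 2\ell_1-1$. By the $x\leftrightarrow y$ symmetry of the product case (Remark~\ref{rem:RelProdCase}), the mirror statement --- $W(\cl)\geq 2\ell_1$ implies $H(\cl)\leq 2\ell_2-1$ --- follows, and combined with Theorem~\thLex\ in the product case, which guarantees that any \mes\ is vertically or horizontally orderable, this yields the desired disjunction.

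First I would invoke Theorem~\thLex\ to reduce to the vertically orderable case and sort $\cl$ vertically. Let $\cl_B=(a_B,\underline b)$ denote the first element and $\cl_T=(a_T,\overline b)$ the last, so that $\overline b-\underline b=H(\cl)-1\geq 2\ell_2-1\geq\ell_2$. Exceptionality of the ordered pair $\cl_B<\cl_T$ rules out the horizontal-strip alternative of $\nImm$ for the difference $\cl_T-\cl_B$, and the surviving vertical-strip alternative forces $0<a_T-a_B<\ell_1$; in particular $a_T-\ell_1<a_B<a_T<a_B+\ell_1$.

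The central step is to confine every element of $\cl$ to the union of two short $x$-intervals surrounding $a_B$ and $a_T$. For an arbitrary $(a',b')\in\cl$: if $b'-\underline b\geq\ell_2$, comparing with $\cl_B$ yields $a'\in(a_B,\,a_B+\ell_1)$; symmetrically, if $\overline b-b'\geq\ell_2$, comparing with $\cl_T$ yields $a'\in(a_T-\ell_1,\,a_T)$. The hypothesis $H(\cl)\geq 2\ell_2$ guarantees that the open ``middle band'' $(\overline b-\ell_2,\,\underline b+\ell_2)$ contains no integer, so every $b'$ satisfies at least one of the two conditions. Consequently every $a'$ lies in the open interval $(a_T-\ell_1,\,a_B+\ell_1)$, and an integer count gives
$$W(\cl)\leq(a_B+\ell_1-1)-(a_T-\ell_1+1)+1=2\ell_1-1-(a_T-a_B)\leq 2\ell_1-2.$$

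The only technical point I anticipate requiring care is the integrality check on the middle band, which follows cleanly from $\overline b-\underline b\geq 2\ell_2-1$ by a length-one open-interval argument. The remainder is short bookkeeping with the two exceptionality comparisons $\cl_B<(a',b')$ and $(a',b')<\cl_T$ that become available under vertical orderability; I expect no serious obstacle, and the slack between $2\ell_1-2$ and the asserted bound $2\ell_1-1$ even suggests the estimate is not tight in general.
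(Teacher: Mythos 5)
Your core estimate is arithmetically sound, but the proof does not establish the theorem as stated. Theorem~\ref{thm:SlimPaPb} asserts semi-boundedness for an \emph{arbitrary} exceptional sequence on $(\ell_1,\ell_2;0)$, whereas your very first step invokes Theorem~\thLex\ (Theorem~\ref{thm:MESRelProdCase}), which applies only to \emph{maximal} exceptional sequences. A non-maximal exceptional sequence need not be vertically or horizontally orderable, so you cannot sort $\cl$, and you lose the directional information ($\cl_B<(a',b')$ and $(a',b')<\cl_T$) on which your one-sided interval bounds $a'\in(a_B,\,a_B+\ell_1)$ and $a'\in(a_T-\ell_1,\,a_T)$ depend. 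Without knowing which of the two differences lies in $\nImm$, the comparison with $\cl_B$ only yields $0<|a'-a_B|<\ell_1$, and combining the two two-sided windows around $a_B$ and $a_T$ gives a bound of roughly $3\ell_1$ rather than $2\ell_1-1$. (There is also an architectural wrinkle: Theorem~\ref{thm:MESRelProdCase} is proved \emph{after} Theorem~\ref{thm:SlimPaPb} in the paper; its proof happens not to use the latter, so you are not circular, but you are inverting the paper's logical order.)

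The paper's proof avoids both maximality and orderability. It assumes $H(\cl)\geq2\ell_2$ \emph{and} $W(\cl)\geq2\ell_1$, takes the four extreme points $A$ (leftmost), $B$ (lowest), $C$ (rightmost), $D$ (highest), and uses only that for any two distinct members of an exceptional sequence one of the two differences lies in $\nImm=\cH\cup\cP$. A case distinction on whether $A$ lies in the upper or lower horizontal half of the bounding rectangle forces a chain of order relations ($A<B<C$, respectively $B<C<D$ and then $B<D$) whose extreme difference can lie in neither $\cH$ nor $\cP$, a contradiction. Your two-point comparison against the first and last elements of the vertically sorted sequence is a correct and pleasantly short alternative \emph{for {\mes}s} --- it even yields the sharper $W(\cl)\leq2\ell_1-2$ in the vertically orderable case --- but to prove the theorem in its stated generality you either need the paper's four-point argument or must rework the comparison so that it uses only pairwise exceptionality and not a chosen order.
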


\begin{proof}
Suppose to the contrary that both 
$H(\cl)\geq2\ell_2$ and $W(\cl)\geq2\ell_1$.

\medskip

We consider (a priori not necessarily 
distinct points) $A=(\underline a,b_A)$, 
$B=(a_B,\underline b)$, $C=(\overline a,b_C)$, 
$D=(a_D,\overline b)\in\cl$, see 
Figure~\ref{fig:SubdivisionR}. We decompose 
$$
R:=\{(a,b)\in\Z^2\mid\underline a\leq a\leq
\overline a,\,\underline b\leq b\leq
\overline b\}
$$ 
into pairs of horizontal and vertical strips, 
namely
$$
\lowerR:=\{(a,b)\in R\mid  b<\underline b
+\ell_2\},\quad\upperR:=\{(a,b)\in R\mid  
b\geq\underline b+\ell_2\}
$$
and 
$$
\leftR:=\{(a,b)\in R\mid  a<\underline a
+\ell_1\},\quad\rightR:=\{(a,b)\in R\mid  
a\geq\underline a+\ell_1\}.
$$
see Figure~\ref{fig:SubdivisionR}. In 
particular, $A\in\leftR$ and $B\in\lowerR$. 
We distinguish two cases.

\begin{figure}[ht]
\newcommand{\spaceA}{\hspace*{2em}}
\newcommand{\scaleA}{0.5}
\begin{tikzpicture}[scale=\scaleA]
\draw[thick, color=black]
  (0,0) -- (5,0) -- (5,5) -- (0,5) -- cycle;
\fill[pattern color=blue!40, pattern=north west lines]
  (-0.5,-0.5) -- (-0.5,2.3) -- (5.5,2.3) -- 
  (5.5,-0.5) -- cycle;
\fill[pattern color=intOrange!40, pattern=north west lines]
  (-0.5,2.7) -- (-0.5,5.5) -- (5.5,5.5) -- 
  (5.5,2.7) -- cycle;
\fill[thick, color=black]
  (0,3) circle (4pt)
  (2,0) circle (4pt)
  (5,1) circle (4pt)
  (4,5) circle (4pt);
\draw[thick, color=black]
  (-0.5,3) node{$A$}
  (2,-0.5) node{$B$}
  (5.5,1) node{$C$}
  (4,5.5) node{$D$}
  (2.5,3.7) node{$\upperR$}
  (2.5,1.3) node{$\lowerR$};
\end{tikzpicture}
\spaceA
\begin{tikzpicture}[scale=\scaleA]
\draw[thick, color=black]
  (0,0) -- (5,0) -- (5,5) -- (0,5) -- cycle;
\fill[pattern color=green!40, pattern=north west lines]
  (-0.5,-0.5) -- (2.3,-0.5) -- (2.3,5.5) -- 
  (-0.5,5.5) -- cycle;
\fill[pattern color=red!40, pattern=north west lines]
  (2.7,-0.5) -- (5.5,-0.5) -- (5.5,5.5) -- 
  (2.7,5.5) -- cycle;
\fill[thick, color=black]
  (0,3) circle (4pt)
  (2,0) circle (4pt)
  (5,1) circle (4pt)
  (4,5) circle (4pt);
\draw[thick, color=black]
  (-0.5,3) node{$A$}
  (2,-0.5) node{$B$}
  (5.5,1) node{$C$}
  (4,5.5) node{$D$}
  (3.7,2.5) node{$\rightR$}
  (1.3,2.5) node{$\leftR$};
\end{tikzpicture}
\caption{The horizontal and the vertical 
subdivision of $R$.}
\label{fig:SubdivisionR}
\end{figure}

\medskip

{\em Case 1:} $A\in\upperR$. Then 
$A\not= B$ and the exceptionality 
condition imply that either $A-B$ 
or $B-A$ lies in $\nImm$. However, 
$A-B=(\underline a-a_B,b_A-
\underline b)\in\nImm$ is impossible 
for $b_A-\underline b\geq\ell_2$ and 
$\underline a-a_B\leq0$. Hence $A<B$ 
and in particular $B\in\leftR$ as 
$\underline b-b_A<0$, so 
$a_B-\underline a<\ell_1$. We conclude 
in a similar way that necessarily 
$B<C$ and $C\in\lowerR$. It follows 
that $A<C$, but this is impossible 
since 
$\overline a-\underline a\geq W(\cl)-1\geq\ell_1$ 
while $b_C-b_A<0$.

\medskip

{\em Case 2:} $A\in\lowerR$. Since 
$\underline a-a_D\leq0$ and 
$b_A-\overline b<0$ we necessarily 
have $A<D$ and $D\in\leftR$. We 
conclude in a similar way that $C<D$ 
and $C\in\upperR$, thus $B<C$ and 
$B\in\rightR$. But then $B<D$ which 
is impossible for $a_D-a_B<0$ and 
$\overline b-\underline b\geq 
H(\cl)-1\geq\ell_2$.
\end{proof}

\medskip

\subsection{An inductive argument}
\label{subsec:indArg}
Our mainstream development for proving fullness
will pursue a rather algorithmic approach based 
on the lexicographical order from \ssect{subsec:ReLExProdCase} 
and the classification of {\mes}s in \ssect{subsec:ClassProdCase}. 
As an aside, we briefly sketch an inductive approach to 
Theorem~{\thFull} which is based {on the following}
{\em collapsing procedure}.

\begin{lemma}
\label{lem:ProductColl}
For $\ell_2\geq3$ let $\cl$ be an exceptional 
sequence on $(\ell_1,\ell_2;0)$ with $\cl_0=0$
and $H(\cl)\leq2\ell_2-1$. Then we obtain a 
sequence $\cl'$ on 
$$
(\ell_1',\ell_2';0)=(\ell_1,\ell_2-1;0)
$$ 
via the following procedure:
\begin{enumerate}
\item[(i)] 
Remove the horizontal line at level $\ell_2-1$ 
from $\cl$.
\vspace{1ex}
\item[(ii)] 
For every $\cl_i=(a_i,b_i)$ with $b_i\geq\ell_2$
put $\cl'_i:=\cl_i-(0,1)$.
\vspace{1ex}
\item[(iii)] 
For all remaining $\cl_i$ put $\cl_i':=\cl_i$.
\end{enumerate}
If we endow $\cl'$ with the order induced by $\cl$,
then $\cl'$ defines an exceptional sequence, too. 
\end{lemma}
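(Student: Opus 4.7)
The plan is to verify the exceptionality of $\cl'$ on $(\ell_1,\ell_2-1;0)$ pair by pair. For a pair $0\leq i<j\leq N$ with both elements surviving the removal, write $\cl_i=(a_i,b_i)$, $\Delta a:=a_j-a_i$, $\Delta b:=b_j-b_i$, and let $\sigma:\Z\setminus\{\ell_2-1\}\to\Z$ denote the collapse map which is the identity below $\ell_2-1$ and subtracts $1$ above. Only the $b$-coordinate changes, so $\cl'_j-\cl'_i=(\Delta a,\,\sigma(b_j)-\sigma(b_i))$. In the product case one has $\nImm(\ell_1,\ell_2-1;0,0)=\{0<a<\ell_1\}\cup\{0<b<\ell_2-1\}$. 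The $a$-strip handles the easy case $0<\Delta a<\ell_1$ at once; otherwise the exceptionality hypothesis on $\cl$ in $(\ell_1,\ell_2;0)$ forces $\Delta b\in\{1,\ldots,\ell_2-1\}$, and the task reduces to showing $\sigma(b_j)-\sigma(b_i)\in\{1,\ldots,\ell_2-2\}$.

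I would split into three sub-cases according to the position of $b_i$ and $b_j$ relative to the removed row. If $b_i\leq\ell_2-2$ and $b_j\geq\ell_2$, then $\Delta b\geq 2$ automatically and $\sigma(b_j)-\sigma(b_i)=\Delta b-1\in\{1,\ldots,\ell_2-2\}$. If both $b_i,b_j\geq\ell_2$, then $\sigma$ shifts both uniformly by $-1$ and preserves $\Delta b$; the height bound $H(\cl)\leq 2\ell_2-1$ combined with $\cl_0=0\in\cl$, which forces $\underline b\leq 0$, yields $\overline b\leq 2\ell_2-2$ and hence $\Delta b\leq\ell_2-2$. Both sub-cases are immediate. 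Along the way one must also check that $\cl'$ has no repeated elements, which is clear: a collision would require $b_i=\ell_2-1$ and $b_j=\ell_2$ (or vice versa), but $b_i=\ell_2-1$ is excluded by the removal.

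The delicate remaining sub-case is $b_i,b_j\leq\ell_2-2$, which is the main obstacle. Here $\sigma$ is the identity on both levels, so we must rule out $\Delta b=\ell_2-1$, equivalently $b_i\leq -1$. The natural structural input, furnished by Theorem~{\thLex} in the inductive framework where this lemma is applied, is that $\cl$ has been vertically reordered beforehand; with $\cl_0=0$ as the first (hence $b$-minimal) element one forces $\underline b=0$, so no $b_i$ is negative and the offending configuration disappears. Under this normalisation $\Delta b\leq\ell_2-2$ throughout, and $\cl'$, endowed with the order induced from $\cl$, is exceptional on $(\ell_1,\ell_2-1;0)$. One can see that the vertical-order reduction is essential by examining small sequences where $\cl_0=0$ is not $b$-minimal, so this normalisation is the key point on which the collapsing procedure hinges.
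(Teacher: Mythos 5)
Your case split is the same as the paper's: it distinguishes (1) $b_i\geq b_j$ or $2\leq b_j-b_i\leq\ell_2-2$, (2) $b_j-b_i=1$, (3) $b_j-b_i\geq\ell_2-1$, and disposes of each exactly as you do, so in substance this is the same proof. The point you isolate at the end, however, is genuine and is in fact glossed over in the paper: its Case~3 asserts that $b_j-b_i\geq\ell_2-1$ forces $\cl_i$ below and $\cl_j$ above the removed line, and this needs $\underline b(\cl)=0$ (no element with negative second coordinate), which does not follow from the stated hypotheses $\cl_0=0$ and $H(\cl)\leq2\ell_2-1$ alone. Concretely, on $(2,3;0)$ the sequence $\big((0,0),\,(1,-1),\,(1,1)\big)$ is exceptional of height $3$, has no elements at level $\ell_2-1=2$ or above, hence is untouched by the collapsing procedure, yet fails to be exceptional on $(2,2;0)$ because $(1,1)-(1,-1)=(0,2)\notin\nImm(2,2;0,0)$. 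So your normalisation via vertical ordering (under which $\cl_0=0$ being the first element forces $\underline b=0$ and kills the offending configuration $b_i\leq-1$) is exactly the right repair, and it is available in the only place the lemma is invoked, namely the sketch of Corollary~\ref{cor:FullnessTheoremPaPb}, where the sequences are maximal and may be vertically reordered by Theorem~\ref{thm:MESRelProdCase} beforehand. Your proof is correct under that added normalisation; the lemma as literally stated is not, so the hypothesis $\underline b(\cl)=0$ (or vertical orderedness) should be made explicit.
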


\begin{proof}
Consider $\cl_i=(a_i,b_i)<\cl_j=(a_j,b_j)$ and 
assume that $b_i$, $b_j\neq\ell_2-1$. We denote 
the {co-immaculate} locus of $(\ell_1',\ell_2';0)$
by $\nImm'$.

\medskip

{\em Case 1.} If $b_i\geq b_j$ or $b_i+2\leq b_j
\leq b_i+\ell_2-2$, then $\cl_1'<\cl_2'$ is 
immediate.

\medskip

{\em Case 2.} If $b_j=b_i+1$, then both $\cl_i$ 
and $\cl_j$ belong to the same side either above 
or below the removed line. Hence, $\cl'_j-\cl_i'=
\cl_j-\cl_i=(a_j-a_i,1)\in\nImm'$.

\medskip

{\em Case 3.} If $b_j\geq b_i+\ell_2-1$, then 
$\cl_i$ sits below and $\cl_j$ sits above the 
line at level $\ell_2-1$. Thus,
$\cl'_j-\cl_i'=\cl_j-\cl_i-(0,1)\in\nImm'$.
\end{proof}

We say that $\cl'$ is obtained from $\cl$ by 
{\em collapsing along $\ell_2$}. Similarly, we can
collapse along $\ell_1$ if $W(\cl)\leq2\ell_1-1$, 
cf.\ also Theorem~\ref{thm:Dichotomy} and
Lemma~\ref{lem:L1Coll}.

\medskip

As an immediate consequence of the collapsing
procedure we obtain that
for a {\mes} where $\sharp\cl=\ell_1\ell_2$, the
inequalities $\sharp\cl'\leq \ell_1(\ell_2-1)$ and 
$\sharp([y=\ell_2-1]\cap\cl)\leq\ell_1$ imply that
$$
\sharp\cl'= \ell_1(\ell_2-1)
\quad\mbox{and}\quad
\sharp(\cl\cap[y=\ell_2-1])=\ell_1.
$$
Therefore, $\cl'$ is maximal, too, 
and we actually removed a horizontal chain in
the sense of Definition~\ref{def:HorChain}.

\medskip

This allows to prove fullness in a rather 
implicit way:

\begin{corollary}[Theorem {\thFull}, product version]
\label{cor:FullnessTheoremPaPb}
On $(\ell_1,\ell_2;0)$ every maximal 
exceptional sequence $\cl$ with 
$H(\cl)\leq2\ell_2-1$ is full.
\end{corollary}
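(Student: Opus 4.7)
The strategy is induction on $\ell_2\geq 2$. For the base case $\ell_2=2$ (below the threshold of Lemma~\ref{lem:ProductColl}), one argues directly, generalising Example~\ref{ex:P1P1} from $\PP^1\times\PP^1$ to $\PP^{\ell_1-1}\times\PP^1$; alternatively, the induction is unified by allowing the base case $\ell_2=1$, where $(\ell_1,1;0)=\PP^{\ell_1-1}$ and fullness is Beilinson's classical theorem, cf.\ Example~\ref{ex:Beilinson}.

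For the inductive step with $\ell_2\geq 3$, the first move is to apply Lemma~\ref{lem:ProductColl}, collapsing $\cl$ along level $\ell_2$ to obtain a maximal exceptional sequence $\cl'$ on $(\ell_1,\ell_2-1;0)$ by deleting the horizontal chain at level $\ell_2-1$ of $\cl$ and shifting all points at levels $\geq\ell_2$ down by $(0,1)$. Theorem~\ref{thm:SlimPaPb} applied to $\cl'$ gives either $H(\cl')\leq 2(\ell_2-1)-1$, so that the inductive hypothesis applies directly, or $W(\cl')\leq 2\ell_1-1$, in which case one invokes the horizontal analog of the corollary, valid by the involution $(a,b)\leftrightarrow(b,a)$ of Remark~\ref{rem:RelProdCase}. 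Either way, $\cl'$ is full in $\CD((\ell_1,\ell_2-1;0))$.

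The core step is to transfer fullness from $\cl'$ back to $\cl$. The horizontal chain removed during the collapse remains in $\cl$ at level $\ell_2-1$, so the Beilinson sequence~\eqref{eq:BeilSeq} pulled back via the first projection allows $\cl$ to generate the entire row $[y=\ell_2-1]\subset\Pic((\ell_1,\ell_2;0))\cong\Z^2$. Every other line bundle $\CO(a,b)$ with $b\neq\ell_2-1$ corresponds under the shift bijection of Lemma~\ref{lem:ProductColl} to a line bundle on the smaller variety, which $\cl'$ generates by a chain of Beilinson steps. I would replay this sequence of steps on $\cl$: horizontal Beilinson steps (of length $\ell_1$) lift verbatim, while each vertical Beilinson step on the small variety (of length $\ell_2-1$) must be extended to a vertical step on the big variety (of length $\ell_2$) by adjoining the appropriate generated element of the filled row $[y=\ell_2-1]$.

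The main obstacle is to verify rigorously that this extension of vertical Beilinson steps always succeeds. Given a vertical chain of length $\ell_2-1$ at some column $x$ on the small variety, spanning levels $c,c+1,\ldots,c+\ell_2-2$, its preimage at column $x$ on the big variety either lies entirely below level $\ell_2-1$ (if $c+\ell_2-2<\ell_2-1$), lies entirely above (if $c\geq\ell_2-1$), or straddles the row $[y=\ell_2-1]$ with a single gap there (if $c\leq\ell_2-2\leq c+\ell_2-2$). In each case the generated element of the filled row $[y=\ell_2-1]$ at column $x$ provides precisely the one missing point needed to complete the preimage to a vertical chain of length $\ell_2$ in the big variety, enabling the Beilinson step. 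Making this case analysis precise is the crux of the proof.
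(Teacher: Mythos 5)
Your strategy --- induction via the collapsing procedure of Lemma~\ref{lem:ProductColl}, filling the removed row first and then lifting the generation process from $\cl'$ back to $\cl$ --- is exactly the route the paper sketches for this corollary. The gap sits in the step you yourself flag as the crux. Your case analysis claims that for \emph{every} vertical chain of length $\ell_2-1$ used on the small variety, spanning levels $c,\ldots,c+\ell_2-2$, the single point $(x,\ell_2-1)$ of the filled row completes its preimage to a vertical chain of length $\ell_2$. This is true only when $0\leq c\leq\ell_2-1$: for $c\leq -1$ the preimage occupies levels $c,\ldots,c+\ell_2-2\leq\ell_2-3$, so $(x,\ell_2-1)$ is not adjacent to it, and symmetrically for $c\geq\ell_2$ the preimage starts at level $c+1\geq\ell_2+1$. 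In these cases no Beilinson step of length $\ell_2$ becomes available and the lift of that step fails. Such chains do occur if you replay an arbitrary witnessing sequence of Beilinson steps for $\cl'$: for instance, filling a vertical line of the small variety downwards uses chains at levels $c,\ldots,c+\ell_2-2$ for every $c\leq 0$.

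To close the gap you must show that the generation of $\Z^2$ from $\cl'$ can be organised so that every vertical triggering chain satisfies $0\leq c\leq\ell_2-1$, i.e.\ touches or straddles the removed row after uncollapsing. This is the content of the paper's own (unproved) assertion that ``any vertical chain of $\cl'$ must reach height $\ell_2-1$'', and it does not follow from fullness of $\cl'$ alone; it requires the finer structure of maximal exceptional sequences. This is precisely why the paper leaves the collapsing argument as a sketch and proves Theorem~\thFull\ (product version) rigorously in Theorem~\ref{thm:FullnessTheoremPaPb}, where the classification via admissible sets (Theorem~\ref{thm:mesAfterAdShiftPC}) guarantees that all vertical chains used span levels $b+1,\ldots,b+\ell_2$ with $0\leq b\leq\ell_2-1$. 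Either import that control into your induction hypothesis, or accept that the inductive argument is only a sketch.
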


Of course, the same result holds for 
$W(\cl)\leq2\ell_1-1$.

\medskip

{\em Sketch of the proof.}
Proceeding by induction we may assume 
that the collapsed sequence $\cl'$ 
generates $\Z^2$ by filling horizontal and 
vertical lines whenever there are
$\ell_1$ or $\ell_2$ consecutive points, 
respectively.

\medskip

Given $\cl$ we begin by using the horizontal chain 
at level $\ell_2-1$ to fill the entire horizontal 
line on which it lies. Afterwards, we may lift all 
line fillings from $\cl'$ to $\cl$ since any vertical
chain of $\cl'$ must reach height $\ell_2-1$.
\hfill$\square$

\medskip

\subsection{Maximal exceptional 
sequences are orderable}
\label{subsec:ReLExProdCase}

\begin{theorem}[Theorem {\thLex}, 
product version]
\label{thm:MESRelProdCase}
Let $\cl=(\cl_0,\ldots,\cl_N)$ 
be a maximal exceptional sequence on 
$(\ell_1,\ell_2;0)$. Then $\cl$ is 
vertically or horizontally orderable. 
\end{theorem}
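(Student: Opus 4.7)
The proof begins with Theorem~\ref{thm:SlimPaPb} (the product-case semi-boundedness): every exceptional sequence $\cl$ on $(\ell_1,\ell_2;0)$ satisfies $H(\cl)\le 2\ell_2-1$ or $W(\cl)\le 2\ell_1-1$. The $\Z^2$-involution $(a,b)\mapsto(b,a)$ from Remark~\ref{rem:RelProdCase} exchanges the two $\mathbb{P}$-factors and correspondingly the notions of vertical and horizontal orderability, so by symmetry it will suffice to prove the theorem under the additional assumption $H(\cl)\le 2\ell_2-1$.

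The plan is to argue by contradiction. Suppose $\cl$ is a MES with $H(\cl)\le 2\ell_2-1$ that is both vertically and horizontally non-orderable. Proposition~\ref{prop:RelCrit} (and its horizontal analogue in Remark~\ref{rem:RelProdCase}) then supplies in $\cl$ both a \emph{downward} pair $\cl_p<\cl_q$ with $b_q-b_p\le -\ell_2$ and a \emph{leftward} pair $\cl_r<\cl_s$ with $a_s-a_r\le -\ell_1$. Lemma~\ref{lem:ReorderJump} lets me reorder $\cl$ while keeping it exceptional, so that the downward jump occurs between consecutive indices $\cl_i<\cl_{i+1}$. In the product case $\xa=\xb=0$, so the parallelogram $\cP$ of $\nImm$ collapses to the open vertical strip $\{0<a<\ell_1\}$; exceptionality of the consecutive jump then forces $0<d:=a_{i+1}-a_i<\ell_1$, i.e.\ the jump moves strictly down and strictly to the right.

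The contradiction will be obtained by confronting the down-right and left-up jumps via the \emph{forced-order} relation on $\cl$: define $P\prec Q$ iff $P-Q\notin\nImm$ and $Q-P\in\nImm$; then every valid exceptional ordering of $\cl$ must be a linear extension of $\prec$, so in particular $\prec$ is acyclic. The two jumps give forced relations $\cl_i\prec\cl_{i+1}$ and $\cl_r\prec\cl_s$. I will then analyse the pairwise forced relations among the four endpoints $\cl_i,\cl_{i+1},\cl_r,\cl_s$, using the observation that in a MES no pair can have differences simultaneously in the opposite corners $\{a\le 0,\,b\le 0\}$ and $\{a\ge\ell_1,\,b\ge\ell_2\}$ of $\Z^2\setminus\nImm$ (otherwise neither direction of the difference lies in $\nImm$ and the two points cannot both sit in an exceptional sequence). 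In every relative configuration this produces either a $\prec$-cycle through the four endpoints, or a forbidden ``corner-on-corner'' pair; each conclusion contradicts the MES hypothesis.

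The hard part is making the case analysis exhaustive. The relative position of $(\cl_r,\cl_s)$ with respect to $(\cl_i,\cl_{i+1})$ naturally splits into several sub-cases (four endpoints distinct vs.\ coinciding, various combinations of signs of coordinate differences), and for each one must identify the cycle or corner obstruction. The bound $H(\cl)\le 2\ell_2-1$ supplied by Theorem~\ref{thm:SlimPaPb}, combined if needed with the horizontal-chain structure of Proposition~\ref{prop:CompShape} and Corollary~\ref{coro:ExHorChains}, constrains the geometry of $\cl$ sharply enough to keep the enumeration finite and to pinpoint the obstruction in each sub-case.
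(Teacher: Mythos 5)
Your proposal stops exactly where the proof has to begin: the entire mathematical content of your argument is the claim that a maximal exceptional sequence cannot simultaneously contain a downward pair $\cl_p<\cl_q$ with $b_q-b_p\le-\ell_2$ (hence $\cl_q-\cl_p=(d,-e)$ with $0<d<\ell_1$, $e\ge\ell_2$) and a leftward pair $\cl_r<\cl_s$ with $a_s-a_r\le-\ell_1$ (hence $\cl_s-\cl_r=(-f,g)$ with $f\ge\ell_1$, $0<g<\ell_2$), and you explicitly defer the case analysis that would establish this (``the hard part is making the case analysis exhaustive''). As written this is a plan, not a proof. The plan does appear viable: for each of the six interleavings of $\{\cl_p,\cl_q,\cl_r,\cl_s\}$ compatible with $\cl_p<\cl_q$ and $\cl_r<\cl_s$, and for each of the four coincidence patterns, one can check directly that some required ``later minus earlier'' difference lands outside $\nImm=\{0<a<\ell_1\}\cup\{0<b<\ell_2\}$ (the computation is of the same flavour as the paper's proof of Theorem~\ref{thm:SlimPaPb}); but none of these verifications appear in your write-up, and your framing obscures them. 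Two further inaccuracies: the reduction to $H(\cl)\le2\ell_2-1$ via Theorem~\ref{thm:SlimPaPb}, and the appeal to Proposition~\ref{prop:CompShape} and Corollary~\ref{coro:ExHorChains}, play no role in the jump-incompatibility argument — the case analysis over relative orderings of four points is finite with or without a height bound — so invoking them suggests the obstruction has not actually been located.

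It is worth noting that your route, if completed, would be genuinely different from the paper's and in one respect stronger. The paper does not confront the two jumps locally: it uses Lemma~\ref{lem:ReorderJump} and the helix operator to normalise the downward jump to the first step $\cl_0=(0,0)$, $\cl_1=(\kappa,\lambda)$ with $\lambda\le-\ell_2$, then invokes the bijectivity of $\Phi|_{\cl}$ (Lemma~\ref{lem:PhiMap}) — the only point where maximality enters — to produce the element $\cl_{i_0}\equiv(1,0)\bmod L$, forces $\cl_{i_0}=\cl_1=(1,m\ell_2)$, and concludes that all subsequent elements lie in a vertical strip of width less than $\ell_1$, whence horizontal orderability by Remark~\ref{rem:RelProdCase}. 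Your four-point argument would use maximality nowhere and would therefore show that \emph{every} exceptional sequence on $(\ell_1,\ell_2;0)$ is vertically or horizontally orderable; but you must actually carry out the enumeration before that conclusion is available.
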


\begin{proof}
{Assume that $\cl$ is a maximal 
exceptional sequence which is {not vertically} 
orderable.}

\medskip

{Reordering if necessary, Lemma~\ref{lem:ReorderJump}
implies that there exists a pair $\cl_i<\cl_{i+1}$
with $b_i-b_{i+1}\geq\ell_2$. Upon
applying $i$-times the helix operator from 
\ssect{subsec:helixOp} we may replace $\cl$
by a sequence where $i=0$. Choosing suitable
coordinates we may therefore suppose
without loss of generality that
$$
\cl_0=(0,0),\quad\cl_1=(\kappa,\lambda)
$$
for $\lambda\leq-\ell_2$ and 
$0<\kappa<\ell_1$.}

\medskip

{Next, there exists precisely 
one $i_0>0$ such that $\Phi(\cl_{i_0})=[1,0]$
by Lemma~\ref{lem:PhiMap}. Hence, 
$\cl_{i_0}\in\big((1,0)+L\big)\cap\nImm$, 
where we recall that
$$
L=\Z(\ell_1,0)\oplus\Z(0,\ell_2).
$$
Consequently, 
$$
\cl_{i_0}=(1,m\ell_2)\in\cP
$$
for some $m\in\Z$, where $\cP$ is the parallelogram of the 
{co-immaculate} locus, cf.\ Subsection~\eqref{subsec:ImmLoc}.}

\medskip

{On the other hand, $\cl_{i_0}$ is equal to or 
a successor of $\cl_1=(\kappa,\lambda)$, hence
$$
\cl_{i_0}\in\big((\kappa,\lambda)+\nImm\big)
\cup\{(\kappa,\lambda)\}.
$$
But $\lambda\leq-\ell_2$ implies 
$\cP\cap\big((\kappa,\lambda)+\CP\big)=\varnothing$, 
so that $\cl_{i_0}\in\nImm$ yields the contradiction 
$0<\kappa<1$ unless $i_0=1$, that is, $\kappa=1$ and $m\leq-1$.}

\medskip

{Now any point $\cl_j=(a_j,b_j)$, $j\geq2$ must be in 
\[
\nImm\cap\big((1,m\ell_2)+\nImm\big)=\{(a,b)\in\Z^2\mid1<a<\ell_1\}. 
\]
By Remark~\ref{rem:RelProdCase} it follows that $\cl$ 
must be horizontally orderable.}
\end{proof}

\begin{remark}
As $\cl^1$ and $\cl^2$ in Example~\ref{ex:P1P1} 
show for suitable $a$ and $b$, a maximal 
exceptional sequence is not necessarily both 
vertically and horizontally orderable. In particular,
there are \mes s examples of height equal to or less than 
$2\ell_2-1$ which are either vertically or
horizontally orderable, but not both. 
Similarly for constrained width.
\end{remark}

\medskip

\subsection{The dichotomy of {\mes}s}
\label{subsec:DichoMESPC}
As a first step towards the classification
of {\mes} we want to combine a lexicographical
order with a spatial constraint.

\begin{lemma}
\label{prop:HorVer}
Let $\cl$ be a {\mes} with $H(\cl)\geq2\ell_2$.
Then $\cl$ is horizontally orderable.
\end{lemma}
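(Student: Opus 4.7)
The plan is to leverage Theorem~\ref{thm:MESRelProdCase}, which already guarantees that $\cl$ is vertically or horizontally orderable. Since the horizontal case is exactly our conclusion, I would assume that $\cl$ is vertically orderable and show that under the additional hypothesis $H(\cl)\geq 2\ell_2$ it must be horizontally orderable as well. After reordering vertically and shifting so that $\underline b=0$, the extremal elements $\cl_0=(a_0,0)$ and $\cl_N=(a_N,\overline b)$ satisfy $\overline b\geq 2\ell_2-1\geq\ell_2$. Applying exceptionality to $\cl_N-\cl_0\in\nImm=\cH\cup\cP$ excludes $\cH$ and pins down $0<a_N-a_0<\ell_1$.

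By the horizontal analogue of Proposition~\ref{prop:RelCrit} (Remark~\ref{rem:RelProdCase}), $\cl$ fails to be horizontally orderable precisely when there exists a ``left-jump'' pair $\cl_i<\cl_j$ with $a_j-a_i\leq-\ell_1$. The plan is to rule out such a pair by contradiction. Exceptionality of the pair forces $(a_j-a_i,b_j-b_i)\in\cH$, since the $a$-component is too negative for $\cP$, so $0<b_j-b_i<\ell_2$. I would then case-split on whether $b_i$ and $b_j$ lie below or above $\ell_2$. The key estimate, enabled by $H(\cl)\geq 2\ell_2$, is that whenever $b<\ell_2$ one has $\overline b-b\geq\ell_2$, so the inequality $\cl_\bullet<\cl_N$ forces the $a$-coordinate into $[a_N-\ell_1+1,\,a_N-1]$; dually, whenever $b\geq\ell_2$, the inequality $\cl_0<\cl_\bullet$ forces it into $[a_0+1,\,a_0+\ell_1-1]$.

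With these two intervals at hand, two of the three sub-cases are routine: if $b_i,b_j<\ell_2$ or $b_i,b_j\geq\ell_2$, both $a_i,a_j$ lie in a common interval of diameter $\ell_1-2$ and so $a_i-a_j\leq\ell_1-2$. The main obstacle is the mixed case $b_i<\ell_2\leq b_j$, where $a_i$ and $a_j$ are pinned into the two different intervals; here the bound $a_N-a_0\leq\ell_1-1$ on the extremal spread becomes crucial, yielding $a_i-a_j\leq(a_N-1)-(a_0+1)\leq\ell_1-3$. In every case $a_i-a_j<\ell_1$ contradicts the assumption $a_i-a_j\geq\ell_1$, so no left-jump pair exists and horizontal orderability follows.
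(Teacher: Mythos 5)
Your proposal is correct and follows essentially the same route as the paper's proof: reduce via Theorem~\ref{thm:MESRelProdCase} to the vertically orderable case, then use the extremal elements $\cl_0$ and $\cl_N$ as anchors, exploiting that a $b$-difference of at least $\ell_2$ forces the difference vector into the parallelogram $\cP$ and hence pins the $a$-coordinates into intervals of length less than $\ell_1$. The only cosmetic difference is that you argue by contradiction against a left-jump pair while the paper verifies the bound $a_j-a_i>-\ell_1$ directly, with the case split on $b_i,b_j$ versus $\ell_2$ being logically equivalent to the paper's split on $b_N-b_i$ versus $\ell_2$.
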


\begin{proof}
{By Theorem~\ref{thm:MESRelProdCase}, it is enough
to show that vertical orderability
implies horizontal orderability. So if
$<$ denotes the vertical order
on $\cl$, we will show that $a_j-a_i>-\ell_1$ 
whenever $\cl_i<\cl_j$, and appeal again to 
Remark~\ref{rem:RelProdCase}.}

\medskip

By definition, 
$\cl_N-\cl_i=(a_N-a_i,b_N-b_i)$
is in the {co-immaculate} locus for all $i<N$. 
If $b_N-b_i\geq\ell_2$, then  
$$
0<a_N-a_i<\ell_1
$$
by the equations defining $\nImm$. 
If $b_N-b_i<\ell_2$, then 
$H(\cl)\geq2\ell_2$ implies that $b_i$ is 
at least at level $\ell_2$ whence 
$$
0<a_i-a_0,\,a_N-a_0<\ell_1.
$$
At any rate, $-\ell_1<a_N-a_i<\ell_1$.

\medskip

Next consider $\cl_i=(a_i,b_i)<
\cl_j=(a_j,b_j)$ for $j<N$. By the above,
$a_j-a_N>-\ell_1$. Since $b_i\leq b_j$,
$b_N-b_i<\ell_2$ implies $b_N-b_j<\ell_2$
and therefore $0<a_i-a_0,\,a_j-a_0<\ell_1$. 
In particular, $a_j-a_i>-\ell_1$. On the
other hand, $b_N-b_i\geq\ell_2$ yields
also
$$
a_j-a_i=a_j-a_N+a_N-a_i>-\ell_1.
$$
\end{proof}

This gives rise to the following dichotomy. 
If $\cl$ satisfies
$H(\cl)\geq2\ell_2$, then Lemma~\ref{prop:HorVer} 
and Theorem~\ref{thm:SlimPaPb} imply 
horizontal orderability and $W(\cl)\leq
2\ell_1-1$. Similarly, 
$W(\cl)\geq2\ell_1$ implies 
vertical orderability and $H(\cl)\leq2\ell_2-1$.
Moreover, independently on $H(\cl)$ or $W(\cl)$,
$\cl$ is either vertically 
or horizontally orderable by 
Theorem~\ref{thm:MESRelProdCase}. This 
implies the

\begin{theorem}[Dichotomy of \mes s]
\label{thm:Dichotomy}
To any {\mes} $\cl$ on $(\ell_1,\ell_2;0)$
at least one of the following two items applies:
\begin{enumerate}
\item[(i)] $\cl$ is vertically orderable with 
$H(\cl)\leq2\ell_2-1$
\vspace{1ex}
\item[(ii)] $\cl$ is horizontally orderable 
with $W(\cl)\leq2\ell_1-1$.
\end{enumerate}
\end{theorem}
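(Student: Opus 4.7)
The plan is to combine the three preceding structural results—the semi-boundedness Theorem~\ref{thm:SlimPaPb}, the lexicographic orderability Theorem~\ref{thm:MESRelProdCase}, and the spatial-to-order implication Lemma~\ref{prop:HorVer}—into a clean case split based on the size of $H(\cl)$ and $W(\cl)$. In fact, the paragraph immediately preceding the statement essentially writes the proof; I would just formalise it by trichotomy.

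First I would dispose of the ``generic'' case in which both $H(\cl)\leq 2\ell_2-1$ and $W(\cl)\leq 2\ell_1-1$. Here Theorem~\ref{thm:MESRelProdCase} guarantees that $\cl$ is vertically or horizontally orderable, and whichever orderability holds, it is paired with the matching width/height bound, so one of (i), (ii) applies directly.

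Next, suppose $H(\cl)\geq 2\ell_2$. Then the semi-boundedness Theorem~\ref{thm:SlimPaPb} forces $W(\cl)\leq 2\ell_1-1$, and Lemma~\ref{prop:HorVer} yields horizontal orderability of $\cl$. Hence case (ii) applies. By the symmetry $(a,b)\mapsto(b,a)$ which swaps $(\ell_1,\ell_2;0)$ with $(\ell_2,\ell_1;0)$ and interchanges height with width and the two orderability notions (cf.\ Remark~\ref{rem:RelProdCase}), the remaining case $W(\cl)\geq 2\ell_1$ reduces to the previous one and gives case (i).

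There is essentially no obstacle here: the three inputs together already cover every possible combination of $(H(\cl),W(\cl))$. The only point to double-check is that Lemma~\ref{prop:HorVer} and its symmetric analogue apply under the hypothesis that $\cl$ is already known to be a \mes\ (not just exceptional), since those inputs rely on maximality through Theorem~\ref{thm:MESRelProdCase}; both are stated at that level of generality, so the reduction is immediate.
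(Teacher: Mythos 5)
Your proposal is correct and is essentially identical to the paper's own argument, which likewise combines Theorem~\ref{thm:SlimPaPb}, Theorem~\ref{thm:MESRelProdCase} and Lemma~\ref{prop:HorVer} in exactly this case split (both bounds hold; $H(\cl)\geq 2\ell_2$; $W(\cl)\geq 2\ell_1$ by symmetry). Your closing check that Lemma~\ref{prop:HorVer} is stated for maximal sequences is the right thing to verify, and it holds.
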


For sake of concreteness {\em we will 
concentrate on the first case for the remainder 
of this paper} since this fits into the twisted case; 
mutatis mutandis everything which follows also 
applies to the second case.

\medskip

\section{The classification for the 
product case}
\label{sec:ClassPC}
\subsection{HeLexing}
\label{subsec:HeLexPC}
Recall from \ssect{subsec:helixOp}
the definition of the helix operator 
$\helix$ which sends a {\mes} $\cl$ on 
$(\ell_1,\ell_2;0)$ to
$$
\helix(\cl)=\big(\cl_1,\ldots,\cl_N,\cl_0+
(\ell_1,\ell_2)\big).
$$

\begin{lemma}
Let $\cl$ be a vertically ordered {\mes} on 
$(\ell_1,\ell_2;0)$ with $\cl_0=0$ and 
$\ell_2<H(\cl)\leq2\ell_2-1$. Then $\helix(\cl)$ 
is vertically orderable with $H(\helix(\cl))\leq2\ell_2-1$.
\end{lemma}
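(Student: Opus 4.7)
The plan is to read off both conclusions directly from Proposition~\ref{prop:RelCrit} (which characterises vertical orderability of an exceptional sequence via the inequality $b_j-b_i>-\ell_2$) together with two elementary observations about the range of $b$-coordinates of $\cl$. The helix operator preserves exceptionality by \ssect{subsec:helixOp}, so it suffices to verify the $b$-coordinate inequality in the new sequence and to bound its height. No new combinatorial input on the parallelogram $\cP$ is needed, which is why this lemma is much softer than what comes before it.

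First I would extract what the hypotheses tell us about the second coordinates of $\cl$. Write $\cl_i=(a_i,b_i)$. Since $\cl_0=(0,0)$ is the vertical-lex minimum of the ordered sequence $\cl$, every later element satisfies $b_i\ge 0$, hence $\underline b(\cl)=0$ and $\overline b(\cl)=H(\cl)-1$. The hypothesis $\ell_2<H(\cl)\le 2\ell_2-1$ therefore gives $\overline b(\cl)\ge\ell_2$ and $\overline b(\cl)\le 2\ell_2-2$, so all elements of $\cl$ lie in the horizontal strip $0\le b\le 2\ell_2-2$.

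The height of $\helix(\cl)=(\cl_1,\ldots,\cl_N,(\ell_1,\ell_2))$ is then immediate. Its maximal $b$-coordinate is $\max(\overline b(\cl),\ell_2)=\overline b(\cl)$ and its minimal $b$-coordinate is at least $0$, so
\[
H(\helix(\cl))\le\overline b(\cl)+1=H(\cl)\le 2\ell_2-1.
\]
For vertical orderability I would invoke Proposition~\ref{prop:RelCrit} applied to the (exceptional) sequence $\helix(\cl)$. For a pair of indices $1\le i<j\le N$ the corresponding elements sit in their original vertical order in $\cl$, so $b_j-b_i\ge 0>-\ell_2$. The only pairs that need checking are those of the form $(\cl_i,(\ell_1,\ell_2))$, where the relevant difference is $\ell_2-b_i\ge \ell_2-(2\ell_2-2)=2-\ell_2>-\ell_2$. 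This exhausts all pairs and yields vertical orderability.

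The main (mild) obstacle is really just making sure one interprets the helix operator correctly and that Proposition~\ref{prop:RelCrit} applies — once the strip $0\le b\le 2\ell_2-2$ is identified the estimates are automatic, since the newly appended point sits at height $\ell_2$, i.e. safely inside the window $[\,\overline b(\cl)-\ell_2+1,\,\underline b(\cl)+\ell_2-1\,]$ in which both the height bound and the $-\ell_2$ threshold are respected.
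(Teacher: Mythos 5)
Your proof is correct and follows essentially the same route as the paper: both arguments reduce vertical orderability to the criterion $b_j-b_i>-\ell_2$ of Proposition~\ref{prop:RelCrit} (you verify it directly, the paper argues by contradiction) and both obtain the height bound from the observation that $H(\cl)>\ell_2$ forces the appended point $(\ell_1,\ell_2)$ to lie at a level already occupied by $\cl$, so that $H(\helix(\cl))\leq H(\cl)$.
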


\begin{proof}
Let
$$
\cl'=(\cl_0',\ldots,\cl_N'):=\helix(\cl).
$$
Since $H(\cl)>\ell_2$ there is a point 
$\cl_i=(a_i,b_i)$ with $b_i\geq\ell_2$ for 
some $i=1,\ldots,N$. Therefore, $H(\helix(\cl))
\leq H(\cl)\leq2\ell_2-1$.

\medskip

Next assume that $\cl'=\helix(\cl)$ is not
vertically orderable. By Proposition
~\ref{prop:RelCrit}, there exists a pair 
$\cl_i'<\cl_j'$ with $b_i'-b_j'\geq\ell_2$. 
Since $\cl_i'=\cl_{i+1}$ and $\cl_j'=\cl_{j+1}$ 
if $j<N$, vertical orderability of $\cl$ 
implies that $j=N$. However, 
$b_{i+1}-\ell_2\geq\ell_2$ implies 
$b_{i+1}\geq2\ell_2$, contradicting 
$H(\cl)\leq2\ell_2-1$.
\end{proof}

It follows that we can vertically reorder 
$\helix(\cl)$. Shifting yields an exceptional
sequence $\heLex(\cl)$ starting at the origin.
We call $\heLex$ the {\em heLex operator}.

\begin{theorem}[Theorem~\thHelex, product version]
\label{thm:HeLexingPC}
On $(\ell_1,\ell_2;0)$ every vertically 
orderable {\mes} $\cl$ of height 
$H(\cl)\leq 2\ell_2-1$ can be transformed 
into a vertically trivial sequence by 
vertically reordering and successively 
applying $\heLex$ 
at most $\ell_1\ell_2$ times.
\end{theorem}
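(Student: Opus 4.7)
The plan is to identify a two-coordinate lex monovariant that strictly decreases under each application of $\heLex$ and whose range in the non-trivial regime has cardinality at most $\ell_1\ell_2$.

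After the initial vertical reordering and (absorbing the shift provided by one application of $\heLex$ if necessary) an overall translation, we may assume $\cl$ is vertically ordered with $\cl_0=0$. Then $\underline b=0$ and $H(\cl)=\ovl b+1\le 2\ell_2-1$, where $\ovl b=\ovl b(\cl)$. Setting $n_0:=\sharp([y=0]\cap\cl)$, the proposed monovariant is
\[
\Phi(\cl):=(\ovl b,\,n_0)\in\Z^2,
\]
endowed with the lexicographical order (first coordinate dominant). Since every row has at most $\ell_1$ elements by Corollary~\ref{coro:NoGaps} and $\sharp\cl=\ell_1\ell_2$, the sequence $\cl$ is vertically trivial if and only if $\ovl b=\ell_2-1$ (in which case every row from $0$ to $\ell_2-1$ is completely filled).

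The crucial preliminary observation is that for any non-trivial $\cl$ the successor $\cl_1=(a_1,b_1)$ satisfies $b_1\in\{0,1\}$: Corollary~\ref{coro:VertOcc} forces every row between $0$ and $\ovl b$ to be occupied. This reduces the analysis of $\heLex$ to two cases. If $n_0\ge 2$, then $b_1=0$ and by Corollary~\ref{coro:NoGaps} the level-$0$ elements are precisely $(0,0),(1,0),\ldots,(n_0-1,0)$. The point $(\ell_1,\ell_2)$ lies at level $\ell_2>0$, so $\cl_1=(1,0)$ remains the minimum after the vertical reordering of $\helix(\cl)$; shifting by $-\cl_1$ preserves $\ovl b$ (as $\ovl b\ge\ell_2$) while decreasing $n_0$ by exactly~$1$. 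If $n_0=1$, row occupancy forces $b_1=1$; again $\cl_1$ remains minimal (since $(\ell_1,\ell_2)$ has level $\ell_2\ge 2>1$), and the shift by $-\cl_1$ drops every level by~$1$, strictly decreasing $\ovl b$.

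In both cases $\Phi$ strictly lex-decreases under $\heLex$, while the lemma preceding the theorem ensures that $\heLex(\cl)$ still satisfies the hypotheses. In the non-trivial regime the range of $\Phi$ is contained in $\{\ell_2,\ldots,2\ell_2-2\}\times\{1,\ldots,\ell_1\}$, a set of cardinality $(\ell_2-1)\ell_1\le\ell_1\ell_2$. Strict lex-decrease therefore forces $\cl$ to reach the trivial regime $\ovl b=\ell_2-1$ after at most $\ell_1\ell_2$ iterations. The main subtle point is the appeal to row occupancy that pins $b_1$ to $\{0,1\}$; without it one would have to allow larger downward jumps in the first coordinate of $\Phi$ and track a more elaborate monovariant.
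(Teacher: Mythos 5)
Your proof is correct and takes essentially the same route as the paper's: the paper's dichotomy (each application of $\heLex$ either empties the bottom row, so the height drops, or preserves the height while removing one element from $[y=0]\cap\cl$) is exactly your lexicographic monovariant $(\ovl b,n_0)$. You merely make explicit the case split via $b_1\in\{0,1\}$ and the count of at most $(\ell_2-1)\ell_1\leq\ell_1\ell_2$ iterations, which the paper leaves implicit.
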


\begin{proof}
If $\cl$ is vertically ordered with $\cl_0=0$, 
the helix operator $\helix$ sends $\cl_0$ to
$(\ell_1,\ell_2)$. Unless $H(\cl)\leq \ell_2$, 
that is, $\cl$ is vertically trivial, either 
\begin{enumerate}
\item[(i)]
the lowest row becomes empty, whence $H(\cl)$ 
decreases at least by one, or
\vspace{1ex}
\item[(ii)]
the height $H(\cl)$ remains unchanged, so that 
we are reducing the number of elements in 
$[y=0]\cap\cl$ by one.
\end{enumerate}
By induction we eventually arrive at a {\mes} 
of height $H(\cl)\leq\ell_2$, i.e., a 
vertically trivial one.
\end{proof}

\medskip

\subsection{The classification of {\mes}s}
\label{subsec:ClassProdCase}
Next we discuss the classification of the
sets underlying a {\mes} on 
$(\ell_1,\ell_2;0)$ by giving an explicit 
algorithm for their construction. We recall
that we tacitly assume to work with
vertically orderable sequences $\cl$ of height
$H(\cl)\leq2\ell_2-1$.

\medskip

For any subset $X\subseteq\DeltaUp=
\{(a,b)\in\Z^2\mid b\geq\ell_2,\; 
0<a<\ell_1\}$ 
or, more generally, for any
$X\subseteq\Z^2$ we let
$$
X_k:=[y=k]\cap X=\{(a,b)\in X\kst b=k\}.
$$

\begin{definition}
\label{def:AdmSetPC}
We call a non-empty set $X\subseteq
\DeltaUp$ {\em admissible} if
\begin{enumerate}
\item[(Ai)]
$X_k=\varnothing$ for $k\geq 2\ell_2-1$.
\vspace{1ex}
\item[(Aii)]
the layers $X_k\not=\varnothing$ consist 
of successive points $(x,k),\,(x+1,k),
\ldots,(x+q_k,k)$.
\vspace{1ex}
\item[(Aiii)]
for each $k\geq\ell_2$ we have 
$$
(0,-1)+X_{k+1}\subseteq X_k.
$$
\item[(Aiv)]
the bottom layer $X_{\ell_2}$ is 
right-aligned, i.e., $(\ell_1-1,\ell_2)
\in X_{\ell_2}$.
\end{enumerate}

\medskip

By convention, the empty set will be 
admissible, too.
\end{definition}

In addition to an admissible set
$X\subseteq \DeltaUp$ we need 
a further set to completely 
classify {\mes}s.

\begin{definition}
\label{def:SuppPartnerPC}
Let $\varnothing\not=X\subseteq\DeltaUp$ 
be admissible. Then $Y\subseteq\Z^2$ 
is called a {\em complementing partner} 
of $X$ if
\begin{enumerate}
\item[(Cv)]
$Y_k=\varnothing$ for $k\geq 2\ell_2$.
\vspace{1ex}
\item[(Cvi)]
$Y_k<X_k$, meaning that $(y,k)\in Y_k$ 
and $(x,k)\in X_k$ imply $y<x$.
\vspace{1ex}
\item[(Cvii)]
for each $k\in\{\ell_2,\ldots,2\ell_2-1\}$, 
the union $Y_k\cup X_k$ forms a horizontal 
chain.
\end{enumerate}

\medskip

For $X=\varnothing$ a complementing partner 
will be any set $Y$ consisting of $\ell_2$ 
horizontal chains $Y_{\ell_2},\ldots,
Y_{2\ell_2-1}$ such that $Y_{\ell_2}$ 
starts at $(0,\ell_2)$.
\end{definition}

\begin{remark}
Whenever $X_k\neq\varnothing$, the 
complementary set $Y_k$ is uniquely 
determined. In contrast, $X_k=\varnothing$ 
for $k\in\{\ell_2,\ldots,2\ell_2-1\}$ 
implies that $Y_k$ defines a horizontal 
chain whose horizontal position is 
unrestricted unless $k=\ell_2$.
\end{remark}

\begin{example}
\label{ex:XY}
Figure~\ref{fig:XY} displays a typical 
{\mes} together with its 
admissible set and complementing partner.
\begin{figure}[ht]
\newcommand{\spaceA}{\hspace*{2em}}
\newcommand{\scaleA}{0.4}
\begin{tikzpicture}[scale=\scaleA]
\draw[color=oliwkowy!40] (-2.3,-1.3) grid (6.3,8.3);
\fill[pattern color=parallelogram, pattern=north west lines]
  (0,-1.3) -- (0,8.3) -- (5,8.3) -- 
  (5,-1.3) -- cycle;
\fill[pattern color=parallelogram, pattern=north west lines]
  (-2.3,0) -- (0,0) -- (0,4) -- 
  (-2.3,4) -- cycle;
\fill[pattern color=parallelogram, pattern=north west lines]
  (5,0) -- (6.3,0) -- (6.3,4) -- 
  (5,4) -- cycle;
\draw[thick, color=black]
  (0,0) -- (-1,0) (5,0) -- (5.5,0)
  (0,4) -- (-1,4) (5,4) -- (5.5,4);
\draw[thick, color=black]
  (0,4) -- (0,6.5) (5,4) -- (5,6.5)
  (0,0) -- (0,-0.5) (5,0) -- (5,-0.5);
\draw[thick, dotted, color=black]
  (-1,0) -- (-2,0) (5.5,0) -- (6.3,0)
  (-1,4) -- (-2,4) (5.5,4) -- (6.3,4);  
\draw[thick, dotted, color=black]
  (0,6.5) -- (0,7.3) (5,5.5) -- (5,7.3)
  (0,-0.5) -- (0,-1.3) (5,-0.5) -- (5,-1.3);
\fill[thick, color=origin]
    (0,0) circle (7pt);
\fill[thick, color=black]
    (1,0) circle (7pt) (0,1) circle (7pt) 
    (1,1) circle (7pt)
    (-1,2) circle (7pt) (0,2) circle (7pt) 
    (1,2) circle (7pt)
    (2,2) circle (7pt) (1,3) circle (7pt) 
    (2,3) circle (7pt) 
    (3,3) circle (7pt) (4,3) circle (7pt) 
    (5,3) circle (7pt)
    (2,4) circle (7pt) (3,4) circle (7pt) 
    (4,4) circle (7pt)
    (2,5) circle (7pt) (3,5) circle (7pt) 
    (4,5) circle (7pt) 
    (3,6) circle (7pt);
\draw[thin, color=red]
    (2,4) circle (7pt) (3,4) circle (7pt)
    (4,4) circle (7pt) (2,5) circle (7pt)
    (3,5) circle (7pt) (4,5) circle (7pt)
    (3,6) circle (7pt);
\draw[thin, color=blue]
    (0,0) circle (7pt)
    (1,0) circle (7pt) (0,1) circle (7pt)
    (1,1) circle (7pt) (-1,2) circle (7pt)
    (0,2) circle (7pt) (1,2) circle (7pt)
    (2,2) circle (7pt) (1,3) circle (7pt)
    (2,3) circle (7pt) (3,3) circle (7pt)
    (4,3) circle (7pt) (5,3) circle (7pt);
\end{tikzpicture}
\spaceA
\begin{tikzpicture}[scale=\scaleA]
\draw[color=oliwkowy!40] (-2.3,-1.3) grid (6.3,8.3);
\fill[thick, color=origin]
    (0,0) circle (7pt);
\fill[thick, color=blue]
    (0,4) circle (7pt);
\fill[thick, color=blue]
    (1,4) circle (7pt) (0,5) circle (7pt) 
    (1,5) circle (7pt) (-1,6) circle (7pt) 
    (0,6) circle (7pt) (1,6) circle (7pt)
    (2,6) circle (7pt) (1,7) circle (7pt) 
    (2,7) circle (7pt) (3,7) circle (7pt) 
    (4,7) circle (7pt) (5,7) circle (7pt);
\fill[thick, color=red]
    (2,4) circle (7pt) (3,4) circle (7pt) 
    (4,4) circle (7pt) (2,5) circle (7pt) 
    (3,5) circle (7pt) (4,5) circle (7pt) 
    (3,6) circle (7pt);
\end{tikzpicture}
\caption{A {\mes} on $(5,4;0)$ with starting 
point at the origin in green. The right hand 
side displays its corresponding admissible 
set $X$ in red with complementing partner 
$Y$ in blue.}
\label{fig:XY}
\end{figure}
\end{example}

\begin{theorem}[Theorem \thClass, product 
version]
\label{thm:mesAfterAdShiftPC}
Let $X\subseteq \DeltaUp$ be 
admissible and $Y\subseteq\Z^2$ a 
complementing partner. Then the 
union of
$$
\clDown:=Y+(0,-\ell_2)\subseteq\cH
\cup[y=0]\quad\mbox{and}\quad 
\clUp:=X\subseteq\DeltaUp
$$
together with vertical order yields 
a {\mes} $\cl$ with $H(\cl)\leq2\ell_2-1$
and $\cl_0=0$. {Moreover, any
vertically ordered {\mes} starting at the origin 
arises this way.}
\end{theorem}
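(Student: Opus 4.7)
The plan is to argue in two directions: first, given admissible $X$ and a complementing partner $Y$, verify that $\cl = \clDown \cup \clUp$ under vertical order is a {\mes} with $\cl_0 = 0$ and $H(\cl) \leq 2\ell_2 - 1$; conversely, extract such $X$ and $Y$ from any vertically ordered {\mes} starting at the origin (which by Subsection~\ref{subsec:DichoMESPC} we may assume satisfies $H(\cl) \leq 2\ell_2 - 1$).

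For the forward direction, the basic numerics come out immediately. Property (Aiv) combined with (Cvi)--(Cvii) at level $\ell_2$ forces $(0,\ell_2) \in Y_{\ell_2}$, so $(0,0) \in \clDown$ is the vertical minimum; (Ai) together with $X \subseteq \DeltaUp$ and (Cv) confine $\cl$ to levels $\{0, \ldots, 2\ell_2 - 2\}$; and (Cvii) contributes $\ell_1$ points at each of the $\ell_2$ levels $k \in \{\ell_2, \ldots, 2\ell_2 - 1\}$, giving $|\cl| = \ell_1\ell_2$. For exceptionality, two points at the same level differ by a chain vector in $\cP$ via (Aii) and (Cvii), and two points with vertical gap in $\{1, \ldots, \ell_2 - 1\}$ differ by a vector in $\cH$. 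The only delicate case is a straddling pair $\cl_i = (a_i, b_i) \in \clDown$ and $\cl_j = (a_j, b_j) \in \clUp$ with $b_j - b_i \geq \ell_2$; here iterating (Aiii) pulls $(a_j, b_j) \in X_{b_j}$ down to $(a_j, b_i + \ell_2) \in X_{b_i + \ell_2}$, and then (Cvi)--(Cvii) place both $(a_i, b_i + \ell_2) \in Y_{b_i + \ell_2}$ and $(a_j, b_i + \ell_2) \in X_{b_i + \ell_2}$ inside a common horizontal chain of length $\ell_1$ with $Y < X$, so $0 < a_j - a_i < \ell_1$ and the difference lies in $\cP$.

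For the converse, set $X := \cl \cap \{b \geq \ell_2\}$ and $Y := (\cl \cap \{0 \leq b < \ell_2\}) + (0, \ell_2)$. Vertical ordering and $\cl_0 = 0$ force $\cl \subseteq \{b \geq 0\}$, and exceptionality against $\cl_0$ places every $\cl_j$ at level $\geq \ell_2$ inside $\cP$, so $X \subseteq \DeltaUp$. Proposition~\ref{prop:CompShape} then yields (Cv) and (Cvii) directly, while inspection of its proof in the product case ($\xb = \xc = 0$) gives (Cvi). The height bound is (Ai); Corollary~\ref{coro:NoGaps} gives (Aii); and (Aiv) follows because $(0, \ell_2) \in Y_{\ell_2}$ forces $Y_{\ell_2} \cup X_{\ell_2} = \{0, \ldots, \ell_1 - 1\} \times \{\ell_2\}$, with $X_{\ell_2}$ the right-aligned portion by (Cvi).

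The main obstacle is (Aiii), the nesting $(0, -1) + X_{k+1} \subseteq X_k$ for $k \geq \ell_2$. Given $(a, k+1) \in X_{k+1}$, exceptionality against $\cl_0$ gives $0 < a < \ell_1$. By bijectivity of $\Phi|_\cl$ (Lemma~\ref{lem:PhiMap}), there is a unique $\cl_p \in \cl$ with $\Phi(\cl_p) = [a, k]$, necessarily of the form $(a + n\ell_1, k + m\ell_2)$ with $m \in \{0, -1\}$ since $\cl$ occupies levels in $\{0, \ldots, 2\ell_2 - 2\}$. I would exclude $m = -1$ by noting that the vertical-order difference $(a, k+1) - \cl_p = (-n\ell_1, \ell_2 + 1)$ cannot lie in $\nImm$: it is not in $\cH$ (wrong level), and in $\cP$ it would require $0 < -n\ell_1 < \ell_1$, impossible for any integer multiple of $\ell_1$. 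Hence $m = 0$, and exceptionality $\cl_0 < \cl_p$ pins $n = 0$, giving $\cl_p = (a, k) \in X_k$.
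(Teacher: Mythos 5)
Your proof is correct and follows the paper's overall architecture (forward direction by case analysis on the level gap, with the straddling case $b_j-b_i\geq\ell_2$ reduced via (Aiii) to a same-level comparison inside a horizontal chain; converse by setting $X:=\cl\cap\DeltaUp$, $Y:=(0,\ell_2)+(\cl\cap(\cH\cup[y=0]))$ and checking the axioms against Corollary~\ref{coro:NoGaps}, Proposition~\ref{prop:CompShape} and Corollary~\ref{coro:VertOcc}). The one step where you genuinely diverge is the verification of (Aiii) in the converse. The paper derives it from a monotonicity statement about the row intervals: using Proposition~\ref{prop:CompShape} it places $(a_b+r_b+1,b+\ell_2)$ in $X$, tests the differences $(a_b+r_b+1,b+\ell_2)-(a_{b-1}+r_{b-1},b-1)$ and its left-hand analogue against $\nImm$ to get $a_b\leq a_{b-1}$ and $a_b+r_b\geq a_{b-1}+r_{b-1}$, and reads off the nesting $(0,-1)+X_{k+1}\subseteq X_k$ from the resulting nesting of the $Y$-intervals. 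You instead invoke the bijectivity of $\Phi|_\cl$ onto $\Z^2/L$ (Lemma~\ref{lem:PhiMap}) to produce the unique representative of the class $[a,k]$ in $\cl$ and then eliminate all representatives other than $(a,k)$ itself by testing the two admissible residues $m\in\{0,-1\}$ against exceptionality; this is a clean, self-contained argument that localises the required point directly rather than through interval bookkeeping, at the cost of leaning on the global counting Lemma~\ref{lem:PhiMap} (which the paper reserves for Proposition~\ref{prop:CompShape} and Theorem~\ref{thm:MESRelProdCase}). Your derivation of (Cvi) from the mixed-pair characterisation inside the proof of Proposition~\ref{prop:CompShape} also replaces the paper's short direct contradiction, with the same effect. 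Two cosmetic caveats: your (Aiv) argument tacitly assumes $X_{\ell_2}\neq\varnothing$, which holds for nontrivial $\cl$ because a full chain in $[y=0]\cap\cl$ would force $H(\cl)\leq\ell_2$ by Corollary~\ref{coro:SlimTwisted}, and should be said; and the standing hypothesis $H(\cl)\leq2\ell_2-1$ for the converse is indeed needed (a vertically ordered \mes\ of larger height exists, e.g.\ $\big((0,0),(0,1),(1,3),(1,4)\big)$ on $\PP^1\times\PP^1$, and is covered only by the horizontal version), but you make the same tacit assumption the paper does, so this is not a gap relative to the intended statement.
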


\begin{proof}
If $X=\varnothing$, then $Y$ consists of 
$\ell_2$ consecutive horizontal chains.
Shifting down by $(0,-\ell_2)$ yields 
a vertically trivial sequence starting 
at the origin. 

\medskip

We therefore assume that $X\not=\varnothing$.
We order the set $\clDown\cup\clUp$ 
vertically to obtain the sequence $\cl$. 
From Definition~\ref{def:HorChain},
Definition~\ref{def:AdmSetPC}~(iv), and
Definition~\ref{def:SuppPartnerPC}~(Cvii), 
it is clear that $Y_{\ell_2}\cup X_{\ell_2}$ 
forms a horizontal chain which ends at 
$(\ell_1-1,\ell_2)$. Consequently, 
$Y_{\ell_2}$ starts at $(0,\ell_2)$ and
$\cl_0=0$.

\medskip

For $\cl_i<\cl_j$ we have to show that 
$\cl_j-\cl_i\in\nImm$. If $\cl_i$, 
$\cl_j\in[y=b]$, then this follows from
$\cl_i,\cl_j\in Y_{b+\ell_2}+(0,-\ell_2)$ 
or $\cl_i,\cl_j\in X_b$,
(Cvi) and (Cvii). If they are at different levels 
$0\leq b_i<b_j\leq 2\ell_2-1$, the only
critical case arises from $b_j-b_i\geq \ell_2$
which implies $0\leq b_i\leq\ell_2-1$ and 
$\ell_2\leq b_j\leq2\ell_2-1$. This means
$$
\cl_i\in[y=b_i]\cap\clDown=Y_{b_i+\ell_2}+
(0,-\ell_2)\quad\mbox{and}\quad\cl_j\in[y=b_j]
\cap\clUp=X_{b_j}.
$$
We proceed via induction over $m:=b_j-b_i-
\ell_2\geq 0$. 

\medskip

If $m=0$, then $\cl_i\in Y_{b_j}+(0,-\ell_2)$ 
whence
$$
\cl_j-\big(\cl_i+(0,\ell_2)\big)\in X_{b_j}-Y_{b_j}
\subseteq\{(1,0),\,\ldots, (\ell_1-1,0)\}.
$$
In particular,  
$\cl_j-\cl_i\in (0,\ell_2)+\{(1,0),\,\ldots,
(\ell_1-1,0)\}\subseteq\DeltaUp\subseteq\nImm$.

\medskip

Next let $m\geq 1$. By (Aiii) we know that
$\cl_j-(0,1)\in\clUp$. On the other hand,
the induction hypothesis implies $\cl_j-(0,1)
-\cl_i\in\nImm$, and since $b_j-1-b_i-\ell_2
\geq0$ we even have $\cl_j-(0,1)-\cl_i\in
\DeltaUp$. Further, $\cl_j-\cl_i\in\DeltaUp$ 
by definition of $\DeltaUp$ whence $\cl_j-\cl_i
\in\nImm$.

\medskip

Finally, we want to show that any vertically
ordered {\mes} $\cl$ with $\cl_0=0$ and $H(\cl)
\leq2\ell_2-1$ arises this way. For this, we 
let
$$
Y:=(0,\ell_2)+\big(\cl\cap(\cH\cup[y=0])\big)
\quad\mbox{and}\quad X:=\cl\cap\DeltaUp
$$ 
where we identify the sequence $\cl$ with its
underlying set. 

\medskip

If $\cl$ is vertically trivial, 
then $Y=(0,\ell_2)+\cl$ and $X=\varnothing$. 
We therefore assume that $\cl$ is not 
trivial and check that $X$ is admissible 
with complementing partner $Y$. Properties 
(Ai) and (Cv) follow from $H(\cl)\leq2\ell_2-1$ 
and the definition of $Y$, respectively. 
Furthermore, (Aii) follows from
Corollary~\ref{coro:NoGaps}, while (Cvii) 
is a consequence of 
Proposition~\ref{prop:CompShape}.

\medskip

By Corollary~\ref{coro:VertOcc}, 
$[y=b]\cap\cl\not=\varnothing$ for 
$0\leq b\leq H(\cl)-1$. Furthermore,
we have $\nl=2\ell_2-H(\cl)$ horizontal lines 
by Corollary~\ref{coro:ExHorChains}. {For 
$b=0,\ldots,\ell_2-\nl-1$, we put here and in the sequel
\begin{equation}\label{eq:rb}
0\leq r_b:=\sharp[y=b]\cap\cl-1<\ell_1-1
\end{equation}
so that $[y=b]\cap\cl=\{(a_b,b),\ldots,(a_b+r_b,b)\}$.} 

\medskip

In particular, we have 
$\{(r_0+1,\ell_2),\ldots,(\ell_1-1,\ell_2)\}
=[y=\ell_2]\cap\cl$ which implies (Aiv). 
For all other $0<b<\ell_2-\nl-1$, 
$Y_{b+\ell_2}=(0,\ell_2)+[y=b]\cap\cl$ is either 
to the left or to the right of 
$X_{b+\ell_2}$. If $Y_{b+\ell_2}$ is to the right, 
then $(a_b-1,b+\ell_2)\in X_{b+\ell_2}$ whence 
$$
(a_b-1,b+\ell_2)-(a_b+r_b,b)=
(-1-r_b,\ell_2)\in\nImm,
$$
contradiction. This implies (Cvi).

\medskip

Finally, Proposition~\ref{prop:CompShape}
implies $(a_b+r_b+1,b+\ell_2)
\in X=\cl\cap\DeltaUp$. It follows that 
$(a_b+r_b+1,b+\ell_2)-(a_{b-1}
+r_{b-1},b-1)\in\nImm$ whence 
$a_b+r_b\geq a_{b-1}+r_{b-1}$. 
Similarly, we have $a_b\leq a_{b-1}$. 
In particular, $(0,1)+Y_{b+\ell_2}
\subseteq Y_{b+\ell_2+1}$
and thus $(0,-1)+X_{b+\ell_2+1}
\subseteq X_{b+\ell_2}$
which yields (Aiii).
\end{proof}

\medskip

\subsection{Generating the derived category}
\label{subsec:ProofPaPb}

We can use Theorem~\ref{thm:mesAfterAdShiftPC} 
to prove fullness of any {\mes} on 
$(\ell_1,\ell_2;0)$. By the dichotomy {principle}, it 
suffices to consider the case of a vertically 
orderable sequence $\cl$ of height 
$H(\cl)\leq2\ell_2-1$. The case of horizontally
orderable sequences of width $W(\cl)\leq2\ell_1-1$
follows analogously.

\medskip

Let $\cl$ be a {\mes}. Since the standard 
rectangle $\Rect_{\ell_1,\ell_2}$ generates 
the Picard group via the horizontal and vertical
Beilinson sequence it suffices to show that 
$\Rect_{\ell_1,\ell_2}\subseteq\langle\cl\rangle$.

\begin{example}
We consider again the sequence $\cl$ from 
Example~\ref{ex:XY}. Figure~\ref{fig:Fullness} 
displays our strategy to fill all of $\Z^2$ 
starting from $\cl$. After filling horizontal 
and vertical lines in Steps (a)-(e) we see that 
$\Rect_{5,4}\subseteq\langle\cl\rangle$.

\begin{figure}[ht]
\newcommand{\spaceA}{\hspace*{1.5em}}
\newcommand{\scaleA}{0.3}
\begin{tikzpicture}[scale=\scaleA]
\draw[color=oliwkowy!40] (-2.3,-1.3) grid (5.3,6.3);
\draw[thick, color=red]
  (-2,3) -- (5,3);
\fill[thick, color=origin]
    (0,0) circle (9pt);
\fill[thick, color=black]
    (1,0) circle (9pt) (0,1) circle (9pt)  
    (1,1) circle (9pt) (-1,2) circle (9pt) 
    (0,2) circle (9pt) (1,2) circle (9pt)
    (2,2) circle (9pt) (1,3) circle (9pt) 
    (2,3) circle (9pt) (3,3) circle (9pt) 
    (4,3) circle (9pt) (5,3) circle (9pt)
    (2,4) circle (9pt) (3,4) circle (9pt) 
    (4,4) circle (9pt) (2,5) circle (9pt) 
    (3,5) circle (9pt) (4,5) circle (9pt) 
    (3,6) circle (9pt);
\draw[thick, color=black]
    (1.5,-3) node{(a)};
\end{tikzpicture}
\spaceA
\begin{tikzpicture}[scale=\scaleA]
\draw[color=oliwkowy!40] (-2.3,-1.3) grid (5.3,6.3);
\draw[thick, color=blue]
  (-2,3) -- (5,3);
\draw[thick, color=red]
  (3,6) -- (3,-1);
\fill[thick, color=origin]
    (0,0) circle (9pt);
\fill[thick, color=black]
    (1,0) circle (9pt) (0,1) circle (9pt) 
    (1,1) circle (9pt) (-1,2) circle (9pt) 
    (0,2) circle (9pt) (1,2) circle (9pt)
    (2,2) circle (9pt) (1,3) circle (9pt) 
    (2,3) circle (9pt) (3,3) circle (9pt) 
    (4,3) circle (9pt) (5,3) circle (9pt)
    (2,4) circle (9pt) (3,4) circle (9pt) 
    (4,4) circle (9pt) (2,5) circle (9pt) 
    (3,5) circle (9pt) (4,5) circle (9pt) 
    (3,6) circle (9pt);
\draw[thick, color=black]
    (1.5,-3) node{(b)};
\end{tikzpicture}
\spaceA
\begin{tikzpicture}[scale=\scaleA]
\draw[color=oliwkowy!40] (-2.3,-1.3) grid (5.3,6.3);
\draw[thick, color=blue]
  (-2,3) -- (5,3);
\draw[thick, color=blue]
  (3,6) -- (3,-1);
\draw[thick, color=red]
  (-2,2) -- (5,2);
\fill[thick, color=origin]
    (0,0) circle (9pt);
\fill[thick, color=red]
    (3,2) circle (9pt);
\fill[thick, color=black]
    (1,0) circle (9pt) (0,1) circle (9pt) 
    (1,1) circle (9pt) (-1,2) circle (9pt) 
    (0,2) circle (9pt) (1,2) circle (9pt)
    (2,2) circle (9pt) (1,3) circle (9pt) 
    (2,3) circle (9pt) (3,3) circle (9pt) 
    (4,3) circle (9pt) (5,3) circle (9pt)
    (2,4) circle (9pt) (3,4) circle (9pt) 
    (4,4) circle (9pt) (2,5) circle (9pt) 
    (3,5) circle (9pt) (4,5) circle (9pt) 
    (3,6) circle (9pt);
\draw[thick, color=black]
    (1.5,-3) node{(c)};
\end{tikzpicture}
\spaceA
\begin{tikzpicture}[scale=\scaleA]
\draw[color=oliwkowy!40] (-2.3,-1.3) grid (5.3,6.3);
\draw[thick, color=blue]
  (-2,3) -- (5,3);
\draw[thick, color=blue]
  (3,6) -- (3,-1);
\draw[thick, color=blue]
  (-2,2) -- (5,2);
\draw[thick, color=red]
  (2,6) -- (2,-1);
\draw[thick, color=red]
  (4,6) -- (4,-1);
\fill[thick, color=origin]
    (0,0) circle (9pt);
\fill[thick, color=red]
    (3,2) circle (9pt) (4,2) circle (9pt);
\fill[thick, color=black]
    (1,0) circle (9pt) (0,1) circle (9pt) 
    (1,1) circle (9pt) (-1,2) circle (9pt) 
    (0,2) circle (9pt) (1,2) circle (9pt) 
    (2,2) circle (9pt) (1,3) circle (9pt) 
    (2,3) circle (9pt) (3,3) circle (9pt) 
    (4,3) circle (9pt) (5,3) circle (9pt)
    (2,4) circle (9pt) (3,4) circle (9pt) 
    (4,4) circle (9pt) (2,5) circle (9pt) 
    (3,5) circle (9pt) (4,5) circle (9pt) 
    (3,6) circle (9pt);
\draw[thick, color=black]
    (1.5,-3) node{(d)};
\end{tikzpicture}
\spaceA
\begin{tikzpicture}[scale=\scaleA]
\draw[color=oliwkowy!40] (-2.3,-1.3) grid (5.3,6.3);
\draw[thick, color=blue]
  (-2,3) -- (5,3);
\draw[thick, color=blue]
  (3,6) -- (3,-1);
\draw[thick, color=blue]
  (-2,2) -- (5,2);
\draw[thick, color=blue]
  (2,6) -- (2,-1);
\draw[thick, color=blue]
  (4,6) -- (4,-1);
\draw[thick, color=red]
  (-2,1) -- (5,1);
\draw[thick, color=red]
  (-2,0) -- (5,0);
\fill[thick, color=origin]
    (0,0) circle (9pt);
\fill[thick, color=red]
    (3,2) circle (9pt) (4,2) circle (9pt);
\fill[thick, color=red]
    (2,1) circle (9pt) (3,1) circle (9pt) 
    (4,1) circle (9pt);
\fill[thick, color=red]
    (2,0) circle (9pt) (3,0) circle (9pt) 
    (4,0) circle (9pt);
\fill[thick, color=black]
    (1,0) circle (9pt) (0,1) circle (9pt) 
    (1,1) circle (9pt) (-1,2) circle (9pt) 
    (0,2) circle (9pt) (1,2) circle (9pt)
    (2,2) circle (9pt) (1,3) circle (9pt) 
    (2,3) circle (9pt) (3,3) circle (9pt) 
    (4,3) circle (9pt) (5,3) circle (9pt)
    (2,4) circle (9pt) (3,4) circle (9pt) 
    (4,4) circle (9pt) (2,5) circle (9pt) 
    (3,5) circle (9pt) (4,5) circle (9pt) 
    (3,6) circle (9pt);
\draw[thick, color=black]
    (1.5,-3) node{(e)};
\end{tikzpicture}
\caption{Filling $\Pic(5,4;0,0)$ from $\cl$. 
The green dot marks the origin. The red 
dots are generated by $\cl$ and are used to
fill further lines.}
\label{fig:Fullness}
\end{figure}
\end{example}

\begin{theorem}[Theorem {\thFull}, 
product version]
\label{thm:FullnessTheoremPaPb}
On $(\ell_1,\ell_2;0)$ every maximal 
exceptional sequence $\cl$ is full.
\end{theorem}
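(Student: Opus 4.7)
The plan is to combine the dichotomy (Theorem~\ref{thm:Dichotomy}) with the classification (Theorem~\ref{thm:mesAfterAdShiftPC}) in order to fill the standard rectangle $\Rect_{\ell_1,\ell_2}$ inside $\langle\cl\rangle$; once that is done, the horizontal and vertical Beilinson sequences from Example~\ref{ex:Beilinson} generate $\Pic(\ell_1,\ell_2;0)$ and hence all of $\CD(X)$.

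First I would invoke Theorem~\ref{thm:Dichotomy} to reduce to the case where $\cl$ is vertically orderable with $H(\cl)\le 2\ell_2-1$; the horizontally orderable case with $W(\cl)\le 2\ell_1-1$ is symmetric under the involution $(a,b)\mapsto(b,a)$, which swaps the roles of $\ell_1$ and $\ell_2$. After a shift we may assume $\cl_0=0$, and Theorem~\ref{thm:mesAfterAdShiftPC} presents $\cl$ as $\clUp\cup\clDown$ with $\clUp=X\subseteq\DeltaUp$ admissible and $\clDown=Y+(0,-\ell_2)$ for a complementing partner $Y$. If $X=\varnothing$, then $\cl$ is vertically trivial and each of its $\ell_2$ rows already is a full horizontal chain, so the horizontal Beilinson sequence fills $\Rect_{\ell_1,\ell_2}$ directly.

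Assume $X\neq\varnothing$. The crucial first step is to fill the line $[y=\ell_2-1]$: condition (Ai) forces $X_{2\ell_2-1}=\varnothing$, whence (Cvii) makes $Y_{2\ell_2-1}$ alone a full horizontal chain of length $\ell_1$; translating by $(0,-\ell_2)$ puts a full chain of $\clDown$ at level $\ell_2-1$, and the horizontal Beilinson sequence yields $[y=\ell_2-1]\subseteq\langle\cl\rangle$. From there the plan is iterative, generalising the procedure illustrated in Figure~\ref{fig:Fullness}: for any column $a$ of $X$ reaching level $2\ell_2-2$, property (Aiii) supplies a chain $(a,\ell_2),\ldots,(a,2\ell_2-2)\in X$ which together with the newly obtained $(a,\ell_2-1)\in\langle\cl\rangle$ forms a vertical chain of length $\ell_2$, and the vertical Beilinson sequence fills $[x=a]$. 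Each new vertical line then completes additional horizontal chains at lower and intermediate levels, which allows further horizontal lines to be filled, and so on.

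The main obstacle is to verify that this iteration actually exhausts $\Rect_{\ell_1,\ell_2}$. The cleanest route is via the inductive collapsing of Lemma~\ref{lem:ProductColl} together with Corollary~\ref{cor:FullnessTheoremPaPb}: once $[y=\ell_2-1]$ is filled, collapsing along that row yields a maximal exceptional sequence $\cl'$ on $(\ell_1,\ell_2-1;0)$, to which the inductive hypothesis applies. Every vertical line-filling of $\cl'$ lifts to one of $\cl$, because any vertical chain of length $\ell_2-1$ inside $\cl'$ must touch the collapsed row, where our already-filled line $[y=\ell_2-1]$ supplies the missing point and completes a vertical chain of length $\ell_2$ in $\cl$; horizontal line-fillings lift trivially. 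The base case $\ell_2=2$ reduces to $\PP^{\ell_1-1}\times\PP^1$ and can be checked directly from the admissible-set description (essentially a generalisation of Example~\ref{ex:P1P1}), which completes the argument.
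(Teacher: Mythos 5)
Your overall strategy is sound, but you have chosen the route that the paper itself presents only as an aside, not the one it uses to actually prove Theorem~\ref{thm:FullnessTheoremPaPb}. The paper's official proof is a direct, non-inductive filling algorithm: after reducing to a vertically ordered \mes\ of the form of Theorem~\ref{thm:mesAfterAdShiftPC}, it fills the $\nl=2\ell_2-H(\cl)$ horizontal chains and then runs a decreasing induction on the level $b$ \emph{within the same sequence}, using (Aiii) and (Cvii) together with the monotonicity $a_{b+1}\leq a_b$ and $a_b+r_b\leq a_{b+1}+r_{b+1}$ to exhibit explicit vertical chains $V_i$ and $V_i'$ in the columns $[x=a_b+r_b+i]$, which in turn produce a horizontal chain at level $b$ in $\langle\cl\rangle$. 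Your opening moves (dichotomy, classification, filling $[y=\ell_2-1]$ from $Y_{2\ell_2-1}$) coincide with the paper's, but for the core step --- verifying that the iteration exhausts $\Rect_{\ell_1,\ell_2}$ --- you fall back on the collapsing induction of Lemma~\ref{lem:ProductColl} and Corollary~\ref{cor:FullnessTheoremPaPb}, which the paper explicitly labels a ``sketch'' and does not rely on for the theorem.

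As a complete proof this leaves a gap in the lifting step. First, the inductive hypothesis cannot simply be ``$\cl'$ is full'': to lift anything you must know that $\cl'$ generates $\Pic$ through a specific sequence of horizontal and vertical line-fillings, so the induction has to carry that stronger, procedural statement. Second, your claim that ``any vertical chain of length $\ell_2-1$ inside $\cl'$ must touch the collapsed row'' is neither justified nor quite what is needed: a vertical chain of $\langle\cl'\rangle$ at collapsed levels $b,\ldots,b+\ell_2-2$ lifts to $\ell_2$ consecutive points of $\langle\cl\rangle$ (after adjoining the point of the filled line $[y=\ell_2-1]$) exactly when $0\leq b\leq\ell_2-1$; a chain lying entirely further below or above the cut lifts to $\ell_2-1$ points separated from the filled line by a gap. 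One must therefore argue that the filling of $\cl'$ never needs such chains. Third, collapsing can saturate the height bound: if $H(\cl)=2\ell_2-1$ then $H(\cl')=2(\ell_2-1)$, so by Lemma~\ref{prop:HorVer} the collapsed sequence falls into the \emph{horizontal} branch of the dichotomy, the vertical classification of Theorem~\ref{thm:mesAfterAdShiftPC} no longer applies to it, and its filling procedure has a different shape which your lifting argument does not address. These are precisely the points the paper sidesteps by giving the explicit algorithm; to finish your argument you would either have to close them or switch to the paper's direct construction.
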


\begin{proof}
{We are indebted to the referee for pointing out to us the following much more elegant version of the proof.

\medskip

We may assume that $\cl$ is of the form
given in Theorem~\ref{thm:mesAfterAdShiftPC} and proceed 
by induction on $\#X$.
If the admissible set 
$X\subseteq \DeltaUp$ is empty, we have already observed in the proof
of Theorem~\ref{thm:mesAfterAdShiftPC} that $\cl$ is
vertically trivial. 

\medskip

Next consider $\#X>0$. First, we fill the lines $[y=b]$ for
$b\in\{0,\ldots,\ell_2-1\}$
with $X_{b+\ell_2}=\emptyset$, that is,
$(\clDown)_b = Y_{b+\ell_2}+(0,-\ell_2)$ is a horizontal chain.
Second, we pick the left-most element $(A,B)$ of the top row of $X$.
Because of (Aiii), the vertical line $[x=A]$ contains a vertical chain built
from the elements of $X$ and the horizontal lines just filled. 
In particular, filling this vertical line yields
$(A,B-\ell_2)\in\langle\cl\rangle$.

\medskip

Now we may consider $\cl'$ built from $X':=X\setminus\{(A,B)\}$.
This is still an admissible set with $\#X'=\#X-1$. Moreover,
$\cl'\subseteq\cl\cup\{(A,B-\ell_2)\}\subseteq\langle\cl\rangle$,
hence $\Z^2=\langle\cl'\rangle\subseteq\langle\cl\rangle$.
}
\end{proof}

\medskip

\section{Maximal exceptional sequences 
in the twisted case}
\label{sec:MES}
For the rest of this article we assume 
$c\not=0$ and set out to prove the main 
theorems \thLex-{\thFull} in the subsequent sections.
We start with some examples.

\subsection{Maximal exceptional sequences 
with $\mathbf{\ell_1=2}$}\label{subsec:ExamL1}
The integral depth was defined as
$$
\hZ=-\Big\lfloor{\frac{\ell_1-2}{\xa}}\Big\rfloor,
$$
cf.\ \ssect{subsec:RelVar}. 
If $\ell_1=2$, then $\hZ=0$. Therefore, the 
{co-immaculate} locus of $(2,\ell_2;c)$ 
is given by
$$
\nImm(2,\ell_2;\xa,\xb)=\cH\cup
\{0,\,(1,0),\,(-\xb+1,\ell_2)\}
$$
and the variety is orderable. 
Hence, the only nontrivial family of 
{\mes}s up to shifts is given by
$$
\cl=\{0,\,(a_1,1),\,(a_1+1,1),\ldots,
(a_{\ell_2-1},\,\ell_2-1),\,
(a_{\ell_2-1}+1,\ell_2-1),(-\xb+1,\ell_2)\}
$$
with $a_i\in\Z$. We have $H(\cl)=\ell_2+1$.
In the language of admissible sets and 
complementing partners which will be developed 
for the twisted case in 
\ssect{subsec:mesClassification},
we have 
$$
X=\{(-\xb+1,\ell_2)\}\;\mbox{and}\; 
Y=\{(-\xb,\ell_2),\,(a_1-\xb,1+\ell_2),\ldots,
(a_{\ell_2-1}+1-\xb,\,2\ell_2-1)\}.
$$

\medskip

For instance, {we obtain the family 
of Hirzebruch surfaces 
$\CH_{\xa}=\big(2,2;(0,-\xa)\big)$ by setting $\ell_2=2$}. 
Among these, $\CH_1=\PP\big(\CO_{\PP^1}\oplus
\CO_{\PP^1}(1)\big)$ is the only Fano variety. 
Figure~\ref{fig:Hirzebruch} represents 
$\nImm(\CH_{\xa})=\nImm(2,2;\xa,\xa)$ for 
$\xa=1$, $2$ and $3$.
\begin{figure}
\newcommand{\scaleA}{0.4}
\newcommand{\spaceA}{\hspace*{3em}}
\begin{tikzpicture}[scale=\scaleA]
\immreg{2}{2}{1}{1}
\end{tikzpicture}
\spaceA
\begin{tikzpicture}[scale=\scaleA]
\immreg{2}{2}{2}{2}
\end{tikzpicture}
\spaceA
\begin{tikzpicture}[scale=\scaleA]
\immreg{2}{2}{3}{3}
\end{tikzpicture}
\caption{The Hirzebruch surfaces $\CH_1$, 
$\CH_2$ and $\CH_3$.}
\label{fig:Hirzebruch}
\end{figure}

\medskip

\subsection{Maximal exceptional sequences 
with $\ell_1=\ell_2=3$}\label{subsec:Exam33}
Since $\hZ\geq -1$ any variety $(3,3;c)$ 
is necessarily vertically orderable. In 
order to determine the nontrivial {\mes}s 
$\cl$ up to shift we may therefore assume 
that $\cl$ is vertically lexicographically 
ordered. We distinguish two cases:

\medskip

{\em Case 1:} $\xa\geq2$. Then $\hZ=0$. 
As in \ssect{subsec:ExamL1} we find the 
{\mes}s
\begin{align*}
\cl^1=\big(& 0,\,(1,0),\,(a_1,1),
\,(a_1+1,1),\,(a_1+2,1),\\
&(a_2,2),\,(a_2+1,2),\,(a_2+2,2),
\,(-\xb+2,3)\big)\\\cl^2=\big(& 0,\,(a_1,1),
\,(a_1+1,1),\,(a_1+2,1),\\
&(a_2,2),\,(a_2+1,2),\,(a_2+2,2),\,(-\xb+1,3),
\,(-\xb+2,3)\big)
\end{align*}
for $a_1$, $a_2\in\Z$. We have $H(\cl^1)=H(\cl^2)=4$.

\medskip

{\em Case 2:} $\xa=1$. By the basic 
inequality~\eqref{eq:EffIneq}, $\xb=1$ or $2$. 
In addition to the {\mes}s from the previous 
case we find
\begin{align*}
\cl^3=\big(&0,\,(-1,1),\,(0,1),\,(a_2,2),
\,(a_2+1,2),\,(a_2+2,2),\\
&(-\xb+1,3),\,(-\xb+2,3),\,(-\xb+1,4)\big)
\end{align*}
for $a_2\in\Z$. We have $H(\cl^3)=5$.

\medskip

Figure~\ref{fig:ExamXY} displays the admissible 
sets and complementing partners for $\xb=2$.

\begin{figure}[ht]
\newcommand{\spaceA}{\hspace*{1.5em}}
\newcommand{\scaleA}{0.3}
\begin{tikzpicture}[scale=\scaleA]
\draw[color=oliwkowy!40] (-4.3,-1.3) grid (3.3,6.3);
\fill[thick, color=origin]
    (0,0) circle (9pt);
\fill[thick, color=blue]
    (-2,3) circle (9pt) (-1,3) circle (9pt)  
    (-3,4) circle (9pt) (-2,4) circle (9pt) 
    (-1,4) circle (9pt) (0,5) circle (9pt)
    (1,5) circle (9pt) (2,5) circle (9pt);
\fill[thick, color=red]
    (0,3) circle (9pt);  
\draw[thick, color=black]
    (-0.5,-3) node{(a)};
\end{tikzpicture}
\spaceA
\begin{tikzpicture}[scale=\scaleA]
\draw[color=oliwkowy!40] (-4.3,-1.3) grid (3.3,6.3);
\fill[thick, color=origin]
    (0,0) circle (9pt);
\fill[thick, color=blue]
    (-2,3) circle (9pt)   
    (-3,4) circle (9pt) (-2,4) circle (9pt) 
    (-1,4) circle (9pt) (0,5) circle (9pt)
    (1,5) circle (9pt) (2,5) circle (9pt);
\fill[thick, color=red]
    (0,3) circle (9pt) (-1,3) circle (9pt);
\draw[thick, color=black]
    (-0.5,-3) node{(b)};
\end{tikzpicture}
\spaceA
\begin{tikzpicture}[scale=\scaleA]
\draw[color=oliwkowy!40] (-4.3,-1.3) grid (3.3,6.3);
\fill[thick, color=origin]
    (0,0) circle (9pt);
\fill[thick, color=blue]
    (-2,3) circle (9pt) (-3,4) circle (9pt) 
    (-2,4) circle (9pt) (0,5) circle (9pt) 
    (1,5) circle (9pt) (2,5) circle (9pt);
\fill[thick, color=red]
    (-1,3) circle (9pt) (0,3) circle (9pt)
    (-1,4) circle (9pt);
\draw[thick, color=black]
    (-0.5,-3) node{(c)};
\end{tikzpicture}
\caption{The admissible sets (red) and 
complementing partners (blue) for 
(a) $\cl^1$ (b) $\cl^2$ and 
(c) $\cl^3$ with $a_1=-1$, $a_2=2$ 
and $\xb=2$. The origin is marked in
green.}
\label{fig:ExamXY}
\end{figure}

\medskip

\subsection{Maximal exceptional sequences 
are always vertically orderable}
\label{subsec:MESRel}
General {\nex} sequences might not be 
vertically orderable.

\begin{example}
\label{exam:NexNmax}
On $(4,2;(0,1))$ consider the exceptional 
sequence
$$
\cl=\big(0,\,(3,-2),\,(2,-1),\,(3,-1),
\,(3,0),\,(3,1),\,(4,1)\big),
$$
cf.\ Figure~\ref{fig:SharpEstimate}.
\begin{figure}[ht]
\begin{tikzpicture}[scale=0.4]
\immreg{4}{2}{1}{1}
\foreach \y in {-1,0,1} {
  \draw[thick, color=red]
  (3,\y) circle (7pt); }
\draw[thick, color=red]
  (4,1) circle (7pt); 
\draw[thick, color=red]
  (2,-1) circle (7pt);
\fill[thick, color=origin]
  (0,0) circle (5pt);
\draw[thick, color=red]
  (0,0) circle (7pt);
\draw[thick, color=red]
  (3,-2) circle (7pt);
\draw[thick, color=black]
  (3.6,-2.5) node{$\cl_1$};
\end{tikzpicture}
\caption{The points encircled in 
red define the sequence $\cl$ in 
$\nImm(4,2;1,1)$.}
\label{fig:SharpEstimate}
\end{figure}
Since $\cl_1-\cl_0=(3,-2)$ the 
order of these two elements cannot 
be switched, cf.\ Proposition
~\ref{prop:RelCrit}. 

\medskip

On the other hand, the sequence 
$\cl$ is not maximal for 
$\sharp\cl=7<8=\ell_1\ell_2$ and eventually 
not extendable. An easy computation 
shows that $\nImm\cap\big((3,-2)+\nImm\big)$,
the set of possible common successors of 
$\cl_0$ and $\cl_1$, is
$$
\{(2,-1),\,(3,-1),\,(3,0),\,(3,1),\,(4,1)\}.
$$
Alternatively, one may apply
Theorem~\ref{thm:MESRel} below
to show that $\cl$ is the largest 
choice for extending $\big(0,\,(3,-2)\big)$ 
to an exceptional sequence.
\end{example}

\begin{theorem}[Theorem {\thLex}, twisted version]
\label{thm:MESRel}
Let $\cl=(\cl_0,\ldots,\cl_{\ell_1\ell_2-1})$ be 
a {\mes} on $(\ell_1,\ell_2;c)$, $c\not=0$. 
Then $\cl$ is vertically orderable.
\end{theorem}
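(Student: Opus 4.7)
The strategy parallels the proof of Theorem~\ref{thm:MESRelProdCase}: assume $\cl$ is not vertically orderable, move the offending jump to the start of the sequence via the helix operator, and use Lemma~\ref{lem:PhiMap} to force the existence of an element of $\cl$ that turns out not to exist. The essential new feature is that the product-case argument concluded with horizontal orderability as an escape, whereas here the twist $c\neq 0$ (equivalently $\xc=\xa\ell_2-\xb\geq\xa\geq 1$) shuts down that escape and produces a genuine contradiction. By Corollary~\ref{coro:RelVar} only the regime $\hZ\leq -\ell_2$, i.e., $\ell_1\geq\xa\ell_2+2$, needs treatment. Suppose $\cl$ is maximal but not vertically orderable. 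Lemma~\ref{lem:ReorderJump} lets me reorder so that some consecutive pair $\cl_i<\cl_{i+1}$ satisfies $b_i-b_{i+1}\geq\ell_2$; applying the helix operator $i$ times (which preserves maximality) and then shifting, I may assume $\cl_0=0$ and $\cl_1=(\kappa,\lambda)$ with $\lambda\leq -\ell_2$. Since $\lambda<0$ excludes $\cl_1\in\cH$, we have $\cl_1\in\cP$, and then $\kappa+\xa\lambda>0$ together with $\lambda\leq -\ell_2$ yields $\kappa>\xa\ell_2\geq\xb+\xa$, so $\kappa\geq\xb+2$.

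By Lemma~\ref{lem:PhiMap}, $\Phi|_\cl : \cl \to \CT=\Z^2/L$ is bijective, so some $\cl_{i_0}\in\cl$ satisfies $\Phi(\cl_{i_0})=[1,0]$. Its representatives have the form $(1+n\ell_1-m\xb,\,m\ell_2)$, whose second coordinate never lies in the open interval $(0,\ell_2)$; hence $\cl_{i_0}\in\nImm$ forces either $m=0$ with $\cl_{i_0}=(1,0)$, or $\cl_{i_0}\in\cP$. Moreover $i_0\geq 1$, so either $\cl_{i_0}=\cl_1$ or $\cl_{i_0}-\cl_1\in\nImm$. The plan is to rule out each of the four resulting subcases on $(n,m)$.

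If $\cl_{i_0}=\cl_1$, then $m\leq -1$ and $\kappa+\xa\lambda=1+n\ell_1+m\xc$; the bound $\kappa<\ell_1$ forces $n\leq 0$, whence $\kappa+\xa\lambda\leq 1-\xc\leq 0$, contradicting $\cl_1\in\cP$. If $m=0$ and $\cl_{i_0}=(1,0)\neq\cl_1$, then $\cl_{i_0}-\cl_1=(1-\kappa,-\lambda)$ is not in $\cH$ (as $-\lambda\geq\ell_2$), and $1-\kappa\leq -\xb-1$ violates the $-\xb<a$ condition of $\cP$. For $m\geq 1$, the upper $\cP$-bound on $\cl_{i_0}$ forces $n\leq 0$, and then the lower $\cP$-bound on $\cl_{i_0}-\cl_1$ reads $\kappa<1+(1-m)\xb\leq 1$, again contradicting $\kappa\geq\xb+2$. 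For $m\leq -1$ and $\cl_{i_0}\neq\cl_1$, the two $\cP$-bounds on $\cl_{i_0}$ demand simultaneously $n\leq 0$ (from $\kappa<\ell_1$) and $n\geq 1$ (from $\kappa+\xa\lambda>0$), which is impossible. The main obstacle is the arithmetic bookkeeping in this case split; the decisive structural input is $\xc\geq 1$, without which the subcase $\cl_{i_0}=\cl_1$ would survive, as indeed it does in the product case.
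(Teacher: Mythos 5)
Your proposal is correct and follows essentially the same route as the paper's proof: normalise the offending jump to the front via Lemma~\ref{lem:ReorderJump} and the helix operator, locate the element $\cl_{i_0}$ with $\Phi(\cl_{i_0})=[1,0]$ via Lemma~\ref{lem:PhiMap}, and derive a contradiction from the $\cP$-inequalities using $\xc\geq 1$. The only differences are organisational (the paper first pins $\cl_{i_0}$ down to $(1,0)$ or $(1-\xb,\ell_2)$ before comparing with $\cl_1$, whereas you interleave the two sets of constraints) plus a harmless reuse of the symbols $\kappa,\lambda$ for the coordinates of $\cl_{i_0}$ in your final subcase.
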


\begin{proof}
Assume to the contrary that $\cl$ is a maximal 
exceptional sequence which is not vertically 
orderable. 

\medskip

By Lemma~\ref{lem:ReorderJump} we may assume 
that there exists a pair $\cl_i<\cl_{i+1}$
with $b_i-b_{i+1}\geq\ell_2$. Furthermore, upon
applying $i$-times the helix operator from 
\ssect{subsec:helixOp} we may replace $\cl$
by a sequence where $i=0$. Choosing suitable
coordinates we therefore suppose
without loss of generality that
$$
\cl_0=(0,0),\quad\cl_1=(\kappa,\lambda)
$$
for $\lambda\leq-\ell_2$ and 
$0<-\lambda\xa<\kappa<\ell_1$. 

\medskip

By Lemma~\ref{lem:PhiMap}, there exists precisely 
one $i_0>0$ such that $\Phi(\cl_{i_0})=[1,0]$. 
Hence, $\cl_{i_0}\in\big((1,0)+L\big)\cap\nImm$. 
Reasoning as in the proof of Lemma~\ref{lem:LImm} 
{shows that} 
$$
\cl_{i_0}=(n\ell_1-m\xb+1,m\ell_2)\in\cP. 
$$
{Comparing with the proof of
Lemma~\ref{lem:LImm} we have added $1$ in the first argument. 
Instead of a contradiction, we now obtain a unique solution:
$m=n=0$ providing $\cl_{i_0}=(1,0)$ for $m\leq0$ and 
$m=1$, $n=0$ providing $\cl_{i_0}=(-\xb+1,\ell_2)$ for $m\geq1$.}

\medskip

On the other hand, $\cl_{i_0}$ is equal to or 
a successor of $\cl_1=(\kappa,\lambda)$, hence
\begin{equation}\label{eq:SI0}
\cl_{i_0}\in\big((\kappa,\lambda)+\nImm\big)
\cup\{(\kappa,\lambda)\}.
\end{equation}

\medskip

However, $(1,0)$, 
$(-\xb+1,\ell_2)\not\in(\kappa,\lambda)+\nImm$, 
contradicting~\eqref{eq:SI0}. Indeed, we have 
$\kappa>-\lambda\xa\geq\xa\ell_2>\xb$.
Therefore, a point 
$(a,b)\in\Z^2$ with $a\leq1$ and $b\geq0$ cannot 
lie in $(\kappa,\lambda)+\nImm$ as
$$
a-\kappa\leq1-\kappa\leq-\xa\ell_2<-\xb,
$$
while $b-\lambda\geq\ell_2$ implies 
$-\xb<a-\kappa$. 
\end{proof}

\medskip

\section{Vertical {\wbox}s}
\label{sec:HCVB}
{We continue with the twisted case $c\neq 0$.
In particular, $\xa,\xb,\xc\geq 1$.}
\subsection{Replacing vertical chains by 
ensembles}
\label{subsec:vertBoxes}
Rather than to vertical chains, the $\kV$-sequence 
from Corollary~\ref{coro:resol} gives rise to 
a complicated shape for the involved locus 
in $\Pic X=\Z^2$. The sheaves
$\CF_k$ occurring in \ssect{subsec:appPicTwo}
suggest the following  

\begin{definition}
\label{def:ShapeBox}
We denote by 
$$
V=V(\ell_2;\xa,\xb)\subseteq\Z^2
$$ 
the set of lattice points 
$\sum_{j\in J}(f^j,1)=(\sum_{j\in J}f^j,\sharp J)$
where $J\subseteq\{1,\ldots,\ell_2\}$ is an arbitrary
subset of cardinality $1\leq\sharp J\leq\ell_2-1$ and 
$f=(f^1,f^2,\ldots,f^{\ell_2})$ runs 
through all integral $\ell_2$-tuples satisfying 
$$
0=f^1\geq f^2\geq\ldots\geq f^{\ell_2-1}\geq f^{\ell_2}=
-\xa\quad\mbox{and}
\hspace{0.5em}\sum_{j=1}^{\ell_2}f^j=-\xb.
$$

\medskip

We call the subset $V+(a,b)$ the 
{\bf V-{\wbox} based at $(a,b)$}.
\end{definition}

Note that $V$ fits into the rectangular 
box bounded by $-\xb\leq x\leq 0$ and $0\leq y\leq \ell_2$ 
and whose diagonal of negative slope ends in $(0,0)$ and 
$(-\xb,\ell_2)$. {In particular}, $V\subseteq\cH$. Further, 
in the extreme case $\xa=\xb$ which necessarily implies 
$c^1=c^2=\ldots=c^{\ell_2-1}=0$, the V-{\wbox} degenerates 
to 
$$
V(\ell_2;\xb,\xb)=\{(-\xb,b),\,(0,b)\mid b=1,\ldots,\ell_2-1\}.
$$
See also Figure~\ref{fig:VBox} for an illustration.

\begin{figure}[ht]
\begin{tikzpicture}[scale=0.5]
\draw[color=oliwkowy!40] (-3.3,-3.3) grid (3.3,3.3);
\fill[pattern color=darkgreen!100, pattern=north west lines]
 (2,-2) -- (-2,0) -- (-2,2) -- (2,0) -- cycle;
\draw[thick, color=redSeg]
 (2,-2)--(2,2)--(-2,2)--(-2,-2)--(2,-2);
\fill[thick, color=origin]
    (2,-2) circle (5pt) (-2,2) circle (5pt);
\draw[thick, color=origin]
    (2,-2) circle (7pt) (-2,2) circle (7pt);
\draw[thick, color=origin]
  (2.4,-2.7) node{$0$};
\draw[thick, color=origin]
  (-2.5,2.7) node{$(-\xb,\ell_2)$};
\foreach \y in {1,2,3}
    {
    \foreach \x in {1,2,3}
        {
        \fill[thick, color=redSeg]
        (\x-\y,\y-2) circle (5pt);
        }
    }
\fill[thick, color=red]
  (-2,0) circle (5pt) (2,0) circle (5pt);
\draw[color=oliwkowy!40] (5.3,-3.3) grid (12.3,3.3);
\fill[pattern color=darkgreen!100, pattern=north west lines]
 (11,-2) -- (7,-1) -- (7,2) -- (11,1) -- cycle;
\draw[thick, color=redSeg]
 (11,-2)--(11,2)--(7,2)--(7,-2)--(11,-2);
\fill[thick, color=origin]
    (11,-2) circle (5pt) (7,2) circle (5pt);
\draw[thick, color=origin]
    (11,-2) circle (7pt) (7,2) circle (7pt);
\draw[thick, color=origin]
  (11.4,-2.7) node{$0$};
\draw[thick, color=origin]
  (6.5,2.7) node{$(-\xb,\ell_2)$};
\foreach \y in {1,2,3}
    {
    \fill[thick, color=redSeg]
    (11,\y-2) circle (5pt) (7,\y-2) circle (5pt);
    }
\end{tikzpicture}
\caption{The sets $V(4;2,4)$ and $V(4;4,4)$. On the left hand 
side the dark red points come from the vector $c=(0,-1-1,-2)$. 
Additional points from $c=(0,0,-2,-2)$ which are not dark red 
yet are marked in light red.}
\label{fig:VBox}
\end{figure}

\medskip

We turn to existence of V-{\wbox}s inside exceptional 
sequences next. To simplify the shape we introduce the
larger {\bf $W$-{\wbox} 
of $(\ell_1,\ell_2;c)$} by 
$$
W=W(\ell_2;\xa):=\{(a,b)\in\Z^2\mid0<b<\ell_2,\,-b\xa\leq a\leq0\}.
$$

\begin{lemma}
\label{lem:comparisonVW}
$\;V(\ell_2;\xa,\xb)$ is symmetric under the transformation
$(x,y)\mapsto (-\xb,\ell_2)-(x,y)$. Moreover, $V(\ell_2;\xa,\xb)
\subseteq W(\ell_2;\xa)$. 
\end{lemma}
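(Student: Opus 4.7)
The plan is to exploit the natural involution on the defining data $(f,J)$ of a point of $V$ that comes from passing to the complementary index set $J^c := \{1,\ldots,\ell_2\}\setminus J$. First I would observe that if $(x,y)\in V$ is produced by the pair $(f,J)$, so that $x=\sum_{j\in J}f^j$ and $y=\sharp J$, then the pair $(f,J^c)$ is still admissible in the sense of Definition~\ref{def:ShapeBox}: the vector $f$ is untouched, and the cardinality bound $1\leq \sharp J\leq \ell_2-1$ is preserved under complementation since $\sharp J^c=\ell_2-y$ lies in the same range. Using the identity $\sum_{j=1}^{\ell_2}f^j=-\xb$, the point associated with $(f,J^c)$ is
$$
\Bigl(\sum_{j\in J^c}f^j,\;\sharp J^c\Bigr)=(-\xb-x,\;\ell_2-y),
$$
which gives the asserted symmetry $(x,y)\mapsto(-\xb,\ell_2)-(x,y)$.

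For the inclusion $V\subseteq W(\ell_2;\xa)$, I would extract from the monotonicity chain
$$
0=f^1\geq f^2\geq\ldots\geq f^{\ell_2-1}\geq f^{\ell_2}=-\xa
$$
the componentwise bounds $-\xa\leq f^j\leq 0$ for every $j$. Summing these inequalities over any subset $J$ of cardinality $y$ gives
$$
-y\xa\;\leq\;\sum_{j\in J}f^j\;\leq\;0,
$$
while the condition $1\leq \sharp J\leq \ell_2-1$ yields $0<y<\ell_2$. These three estimates together are exactly the defining inequalities of $W(\ell_2;\xa)$.

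No real obstacle arises here; the lemma is a direct unpacking of Definition~\ref{def:ShapeBox}, with complementation of $J$ supplying the point symmetry and the monotonicity constraints on $f$ supplying the rectangular containment. The only minor care needed is to verify that the cardinality range is stable under $J\mapsto J^c$, which is automatic from the symmetric form $1\leq\sharp J\leq\ell_2-1$.
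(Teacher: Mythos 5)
Your proof is correct and follows essentially the same route as the paper: the symmetry comes from replacing $J$ by its complement together with the identity $\sum_{j=1}^{\ell_2}(f^j,1)=(-\xb,\ell_2)$, and the inclusion $V\subseteq W$ comes from the componentwise bounds $-\xa\leq f^j\leq 0$ summed over a subset of cardinality $y$ with $0<y<\ell_2$. Nothing is missing.
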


\begin{proof}
For the symmetry note that
$\sum_{j=1}^{\ell_2}(f^j,1)=(-\xb,\ell_2)$. {Therefore,}
$$
(-\xb,\ell_2)-\sum_{j\in J}(f^j,1)=\sum_{i\in I}(f^i,1)
$$ 
with $I=\{1,\ldots,\ell_2\}\setminus J$.

\medskip

Further, $(a,b)\in V$ implies 
$(a,b)=(\sum_{j\in J_b}f^j,b)$
for some suitable $0\geq f^j\geq-\xa$ and 
$\sharp J_b=b$. Hence $a\geq-b\xa$. 
\end{proof}

Let us define
\noGammaText{
\begin{align*}
\ko{\cP}_V&\phantom{:}=
\{(a,b)\in\Z^2\mid  -\xb\leq a\leq 0
\hspace{0.5em}\mbox{and}\hspace{0.5em}
0\leq\langle(a,b),\,(1,\xa)\rangle \leq -\xb+\xa\ell_2\}
\end{align*}
}
\gammaText{
\begin{align*}
\ko{\cP}_V&\phantom{:}=
\{(a,b)\in\Z^2\mid  -\xb\leq a\leq 0
\hspace{0.5em}\mbox{and}\hspace{0.5em}
0\leq\langle(a,b),\,(1,\xa)\rangle \leq \xc\}
\end{align*}
}
\!\!as a smaller (and closed) version 
of the {co-immaculate} parallelogram
$\cP$ from \ssect{subsec:ImmLoc}, 
cf.~Figure~\ref{fig:scenarioHighSeq}.
The $(-\xb,\ell_2)$-symmetry of $V$
implies the following observation.

\begin{figure}[ht]
\newcommand{\scaleA}{0.3}
\newcommand{\spaceA}{\hspace*{2em}}
\begin{tikzpicture}[scale=\scaleA]
\def\la{16} %
\def\lb{7} %
\def\a{2} %
\def\b{6} %
\tikzmath{real \slopeBox; \slopeBox=\b/\a;}
\tikzmath{real \heightBox; \heightBox=\lb-\slopeBox;}
\tikzmath{real \slopeTriangle; \slopeTriangle=\la/\a;}
\tikzmath{real \WBoxLeft; \WBoxLeft=\b*\lb/\slopeBox;}
\fill[color=blue!3]
 (-\b,\lb-\heightBox) -- (0,0) -- (\la,-\slopeTriangle) -- (\la,\heightBox)
   -- (-\b,\lb+\slopeTriangle) -- (-\b,\lb) -- cycle;
\shade[left color=white!80!yellow, right color=white]
 (-\b-3,1-0.3) -- (\la+5,1-0.3) -- (\la+5,\lb-1+0.3) -- (-\b-3,\lb-1+0.3)
                                                                  -- cycle;
\draw[color=oliwkowy!10] (-\b-3.3,-\slopeTriangle) grid
(\la+5.3,\lb+\slopeTriangle); %
\draw[thin, color=yellow!50!orange]
 (-\b-3,1) -- (\la+5,1)  (-\b-3,2) -- (\la+5,2) (-\b-3,3) -- (\la+5,3)
 (-\b-3,\lb-1) -- (\la+5,\lb-1)  (-\b-3,\lb-2) -- (\la+5,\lb-2)
 (-\b-3,\lb-3) -- (\la+5,\lb-3);
\fill[pattern color=darkgreen!100, pattern=north west lines]
 (0,0) -- (0,\heightBox) -- (-\b,\lb) -- (-\b,\slopeBox) -- cycle;
\draw[dashed, color=blue]
 (0,0) -- (\la,-\slopeTriangle) -- (\la,\heightBox)
   -- (-\b,\lb+\slopeTriangle) -- (-\b,\lb);
\draw[dashed, color=purple]
 (0,0) -- (\la,0) (-\b,\lb) -- (\la-\b,\lb)
 (-\b,2*\lb-1) -- (\la-\b,2*\lb-1);
\draw[thick, color=black]
 (0,0) -- (0,\heightBox) -- (-\b,\lb) -- (-\b,\slopeBox) -- cycle;
\fill[pattern color=red!100, pattern=north east lines]
 (-\b,\lb) -- (-\b,\slopeBox) -- (-\WBoxLeft,\lb) -- cycle
 (0,\heightBox) -- (0,\lb) -- (-\b,\lb) -- cycle;
\fill[pattern color=lightblue!100, pattern=north east lines]
 (0,0) -- (0,\lb-\slopeBox) -- (-\b+\WBoxLeft,0) -- cycle
 (-\b,\lb-\heightBox) -- (-\b,0) -- (0,0) -- cycle;
\fill[color = purple]
  (\la,0) circle (5pt);
\draw[thick, color=black]
  (-1.0,-1.3) node{$0$} (-\b-3.3,\lb+1.3) node{$(-\xb,\ell_2)$}
  (\la-\b+4.5,\lb) node{$(\ell_1-\xb,\ell_2)$} (\la+3.0,0) node{$(\ell_1,0)$}
  (\la-\b+4.5,2*\lb-1) node{$(2\ell_2-1)$-line}
  (\la+1.5,-6) node{$\cP$} (-0.5*\b,0.5*\lb) node{$\bbox$}
  (-1.8*\b,0.7*\lb) node{$W$}%
  (0.8*\b,0.2*\lb) node{$(-\xb,\ell_2)-W$} %
  (\la+3,0.5*\lb) node{$\CH$};
\fill[color = red]
 (0,0) circle (5pt);
\fill[color = blue]
 (-\b,\lb) circle (5pt) (\la-\b,\lb) circle (5pt);
\end{tikzpicture}
\caption{$\bbox$ and $\cP$ and $\CH$ for
$\nImm(\ell_1,\ell_2,\xa,\xb)=\nImm(16,7;2,6)$}
\label{fig:scenarioHighSeq}
\end{figure}

\begin{lemma}
\label{lem:comparisonPW}
We have
\begin{enumerate}
\item[(i)] $W(\ell_2;\xa)\cap\big((-\xb,\ell_2) - W(\ell_2;\xa)\big)
= \ko{\cP}_V\setminus \big\{0,\;(-\xb,\ell_2)\big\}$.
\vspace{1ex}
\item[(ii)] $V(\ell_2;\xa,\xb)\subseteq \ko{\cP}_V\setminus
\big\{0,\;(-\xb,\ell_2)\big\}$.
\end{enumerate}
\end{lemma}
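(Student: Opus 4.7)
My plan is to verify each part by a direct computation from the definitions, with part (i) obtained by a substitution and part (ii) by elementary sign bookkeeping inside $V$.

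For part (i), I will substitute $(a,b) = (-\xb - x,\,\ell_2 - y)$ to translate membership of $(x,y)$ in $(-\xb,\ell_2) - W$ into inequalities on $(x,y)$. The conditions $0 < b < \ell_2$ and $-b\xa \leq a \leq 0$ defining $W$ become $0 < y < \ell_2$, $-\xb \leq x$, and $x + y\xa \leq \xc$. Intersecting with $W$ itself (which supplies $x \leq 0$ and $x + y\xa \geq 0$) gives precisely the system defining $\ko{\cP}_V$, modulo the strip condition $0 < y < \ell_2$. A short boundary check then confirms that among lattice points of $\ko{\cP}_V$ the lines $y = 0$ and $y = \ell_2$ are met only at the corners $(0,0)$ and $(-\xb,\ell_2)$ respectively, which must therefore be removed to obtain the stated equality.

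For part (ii), I will pick an arbitrary $(a,b) = \bigl(\sum_{j \in J} f^j,\,|J|\bigr) \in V$ with $1 \leq |J| \leq \ell_2 - 1$ and each $f^j \in [-\xa,0]$ satisfying $\sum_{j=1}^{\ell_2} f^j = -\xb$, and check the four inequalities of $\ko{\cP}_V$ directly. The bound $a \leq 0$ is immediate from $f^j \leq 0$; the bound $a \geq -\xb$ follows from $\sum_j f^j = -\xb$ and $f^j \leq 0$ on $J^c$; the lower bound $a + b\xa \geq 0$ rewrites as $\sum_{j \in J}(f^j + \xa) \geq 0$; and the upper bound $a + b\xa \leq \xc$ reduces, via the total-sum identity, to $\sum_{j \in J^c}(f^j + \xa) \geq 0$. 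The exclusion of $(0,0)$ and $(-\xb,\ell_2)$ is automatic from $|J| \notin \{0,\ell_2\}$.

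I do not foresee any serious obstacle; both parts are essentially bookkeeping. The one point worth noting is that the symmetry $(x,y) \mapsto (-\xb,\ell_2) - (x,y)$ established for $V$ in Lemma~\ref{lem:comparisonVW} is built into both statements (it swaps the pair of lower inequalities with the pair of upper ones), so if preferred the upper-bound checks in (ii) can be derived from the lower ones by symmetry rather than by a separate computation.
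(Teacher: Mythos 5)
Your proof is correct and takes essentially the same route as the paper, which states this lemma without a written proof as an ``observation'' implied by the definitions and the $(-\xb,\ell_2)$-symmetry of $V$ from Lemma~\ref{lem:comparisonVW}; your substitution for (i) and sign bookkeeping for (ii) supply exactly the intended verification. The only point worth making explicit is that your boundary check in (i) — that $\ko{\cP}_V$ meets the lines $[y=0]$ and $[y=\ell_2]$ only in the corners $(0,0)$ and $(-\xb,\ell_2)$ — uses $\xa\geq1$, which holds here because we are in the twisted case $c\neq0$.
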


\begin{remark}
\label{rem:shapeV}
The fact that $f^1=0$ and $f^{\ell_2}=-\xa$ for the sequences 
defining $V$ allows actually an even more
refined description. It turns out that 
$V\cup\big\{0,\;(-\xb,\ell_2)\big\}$ equals the union 
of four smaller $\ko{\cP}_V$-like parallelograms
located at the four corners of the ambient $\ko{\cP}_V$. However, 
the rather coarse relationship %
$V\subseteq W$ from Lemma~\ref{lem:comparisonVW} will be
sufficient for our purposes.
\end{remark}

\subsection{Finding $V$- and $W$-{\wbox}s inside \mes s}
\label{subsec:BoxesAndMES}
Our goal is to {locate} sufficiently 
many $V$-{\wbox}s inside any given {\mes} $\cl$.

\begin{lemma}
\label{lem:InsertWBox}
Let $\cl$ be a vertically ordered exceptional sequence on 
$(\ell_1,\ell_2;c)$ with $\cl_0=0$ and $H(\cl)\geq2\ell_2$. 
Then $\cl\cup W(\ell_2;\alpha)$ is also exceptional with 
respect to the vertical order.
\end{lemma}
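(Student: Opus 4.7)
The plan is to verify the exceptionality relation $v-u\in\nImm$ for every pair $u<v$ in $\cl\cup W$ ordered vertically. Since $\cl_0=0$ is minimal in vertical order, $\underline b(\cl)=0$, and the height assumption $H(\cl)\geq 2\ell_2$ yields an element $\cl_M=(a_M,b_M)\in\cl$ with $b_M\geq 2\ell_2-1$. Because $\cl_M\in\nImm$ and $b_M\geq\ell_2$, necessarily $\cl_M\in\cP$, and unwinding $a_M+b_M\xa<\ell_1+\xc$ with $b_M\geq 2\ell_2-1$ and $\xc=\xa\ell_2-\xb$ yields the spatial bound
$$
a_M+\xa(\ell_2-1)<\ell_1-\xb,
$$
which together with $a_M>-\xb$ forces $(\ell_2-1)\xa<\ell_1$. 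This single bound, encoding the height hypothesis, will drive every non-trivial estimate.

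I then split the verification into cases. Pairs inside $\cl$ are exceptional by hypothesis. For two points $w=(a,b)$, $w'=(a',b')\in W$ with $w<w'$, either $b<b'$ and $w'-w\in\cH$ since $0<b'-b<\ell_2$; or $b=b'$, in which case $0<a'-a\leq b\xa\leq(\ell_2-1)\xa<\ell_1$ places $(a'-a,0)$ in $\cP$. Any mixed pair $\cl_i\in\cl$, $w\in W$ at distinct levels with $|b_i-b_w|<\ell_2$ has difference in $\cH$; the subcase $\cl_i<w$ with $b_w-b_i\geq\ell_2$ is vacuous, since it would force $b_w\geq\ell_2$.

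Three non-trivial mixed subcases remain: (a) $\cl_i<w$ at the same level $b$ with $a_i<a_w$; (b) $w<\cl_i$ at the same level $b$ with $a_w<a_i$; (c) $w<\cl_i$ with $b_i-b_w\geq\ell_2$. In (a) I invoke $\cl_M-\cl_i\in\nImm$; since $b_M-b\geq\ell_2$ this element lies in $\cP$, and its second parallelogram inequality $(a_M-a_i)+(b_M-b)\xa<\ell_1+\xc$, combined with $a_M\geq -\xb+1$ and $b_M-b\geq\ell_2$, gives $a_i\geq 2-\ell_1$, whence $0<a_w-a_i\leq\ell_1-2<\ell_1$. In (b) I combine $a_i<a_M+\xb<\ell_1-\xa(\ell_2-1)$ (the first inequality from $\cl_M-\cl_i\in\cP$, the second from the spatial bound above) with $a_w\geq-(\ell_2-1)\xa$ to obtain $a_i-a_w<\ell_1$. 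In (c), $\cl_i\in\cP$ since $b_i\geq\ell_2+1$, and the delicate step is the positivity
$$
(a_i-a_w)+(b_i-b_w)\xa>-\xb+(b_i-b_w)\xa\geq\xa\ell_2-\xb\geq\xa>0,
$$
using $a_i>-\xb$, $a_w\leq 0$, $b_i-b_w\geq\ell_2$, and the basic inequality $\xb\leq\xa(\ell_2-1)$; the remaining three $\cP$-inequalities for $\cl_i-w$ follow directly from the upper bound $a_i<\ell_1-\xb-(b_i-\ell_2)\xa$ together with $a_w\geq-b_w\xa$.

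The conceptual obstacle is recognising that the hypothesis $H(\cl)\geq 2\ell_2$ enters only through the single spatial bound on $a_M$, and that this bound suffices to control every same-level and ``high-above'' comparison between $\cl$ and $W$. Once this is in hand, every subcase reduces to a short computation using the definitions of $\cP$ and $\cH$ and the basic inequality~\eqref{eq:EffIneq}.
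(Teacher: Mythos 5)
Your proof is correct and follows essentially the same route as the paper's: the case $w<\cl_i$ with $b_i-b_w\geq\ell_2$ is settled by the same unwinding of the defining inequalities of $\cP$ and $W$ (the paper's statement that $(a,b)-W\subseteq\nImm$ for $(a,b)\in\DeltaUp$), and the same-level comparisons are controlled by the point $\cl_M$ at level $\geq 2\ell_2-1$, exactly as the paper does via Lemma~\ref{lem:HeightConstraint} and the containment of $W\cup(\cl\cap\cH)$ in $\cl_M-\DeltaUp$. Your explicit bounds $a_i\geq 2-\ell_1$, $a_i<\ell_1-\xa(\ell_2-1)$ and $\xa(\ell_2-1)<\ell_1$ are just the coordinates of that containment written out by hand.
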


\begin{proof}
As suggested by 
Figure~\ref{fig:WBoxTriangle}, 
$(a,b)-W\subseteq\nImm$ for all 
$(a,b)\in\DeltaUp$. Indeed, let $(a',b')\in W$. 
By \ssect{subsec:ImmLoc},
$$
\ell_2\leq b,\quad-\xb<a<
\noGammaText{\ell_1-\xb-(b-\ell_2)\xa}
\gammaText{\ell_1 + \xc -b\xa}.
$$
Since $b'<\ell_2$ it follows that $0<b-b'$. 
If $b-b'<\ell_2$, then $(a-a',b-b')\in\cH$, 
and we are done. If $\ell_2\leq b-b'$, then 
$0\leq-a'\leq b'\xa$ 
implies 
$$
-\xb<a-a'<
\gammaText{\ell_1+\xc-(b-b')\xa}
=\ell_1-\xb-(b-b'-\ell_2)\xa<\ell_1
$$
whence $(a-a',b-b')\in\nImm$.

\begin{figure}[ht]
\begin{tikzpicture}[scale=0.4]
\immreg{4}{3}{1}{1}
\fill[pattern color=red!30, pattern=north west lines]
  (0,0) -- (-2,2) -- (0,2) -- cycle;
\fill[pattern color=red, pattern=north west lines]
  (1,4) -- (1,2) -- (3,2) -- cycle;
\end{tikzpicture}
\caption{$\nImm(4,3;1,1)$ with the $W$-{\wbox} in 
light red and the set $(1,4)-W$ in red.}
\label{fig:WBoxTriangle}
\end{figure}

\medskip

Furthermore, $H(\cl)\geq2\ell_2$ implies 
existence of a point $(a,b)\in\cl$ with 
${b\geq2\ell_2-1}$. Therefore, $(a,b)-W$ and
$(a,b)-(\cl\cap\cH)$ are contained in 
$\DeltaUp$. In virtue of 
Lemma~\ref{lem:HeightConstraint} 
it follows that $|a'-a''|\leq\ell_1-2$ 
for any $(a',b')\in W$ and $(a'',b')\in(\cl\cap\cH)\cup W$. 
Hence $\cl\cup W$ is exceptional.
\end{proof}

\begin{corollary}\label{cor:WBox}
Let $\cl$ be a vertically ordered {\mes} with 
$H(\cl)\geq2\ell_2$. Then $\cl_0+W\subseteq\cl$. 
In particular, $\cl_0+(V\cup\{(0,b)\mid0\leq b<\ell_2\})$
is contained in $\cl$.
\end{corollary}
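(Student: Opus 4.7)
The plan is to deduce this almost immediately from the preceding Lemma~\ref{lem:InsertWBox} together with the non-extendability inherent in maximality. First I would translate the sequence by $-\cl_0$; this shift preserves vertical ordering, the height $H(\cl)$, and the property of being a \mes, so we may assume $\cl_0=0$. The hypothesis $H(\cl)\geq 2\ell_2$ is unchanged, so Lemma~\ref{lem:InsertWBox} applies and shows that $\cl\cup W(\ell_2;\xa)$ again underlies an exceptional sequence with respect to the vertical order.

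Next I invoke maximality. By Definition~\ref{def:MaxExc} a {\tm} sequence is in particular {\nex}: no strictly larger subset of $\Z^2$ can underlie an exceptional sequence. Hence $\cl\cup W=\cl$ as subsets of $\Z^2$, i.e.\ $W\subseteq\cl$. Translating back yields $\cl_0+W\subseteq\cl$, which is the first assertion.

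For the ``in particular'' clause, Lemma~\ref{lem:comparisonVW} supplies the inclusion $V(\ell_2;\xa,\xb)\subseteq W(\ell_2;\xa)$, so $\cl_0+V\subseteq\cl$. The remaining points $(0,b)$ with $0<b<\ell_2$ lie in $W$ by inspection---take $a=0$, and $-b\xa\leq 0\leq 0$ holds trivially---while $(0,0)$ is $\cl_0$ itself. There is essentially no obstacle: all the combinatorial work, namely controlling the differences $(a,b)-W$ inside $\DeltaUp$ and appealing to Lemma~\ref{lem:HeightConstraint}, was carried out in Lemma~\ref{lem:InsertWBox}. The only subtlety is to deduce $W\subseteq\cl$ via non-extendability rather than by a cardinality count, since $\cl\cup W$ and $\cl$ may well already have the same cardinality.
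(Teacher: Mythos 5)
Your proof is correct and follows essentially the same route as the paper: apply Lemma~\ref{lem:InsertWBox} (after normalising $\cl_0=0$), conclude $W\subseteq\cl$ from maximality/non-extendability, and obtain the second claim from $V\subseteq W$ (Lemma~\ref{lem:comparisonVW}) together with the observation that $(0,b)\in W$ for $0<b<\ell_2$ and $(0,0)=\cl_0$. No gaps.
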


\begin{proof}
Since $\cl\cup(\cl_0+W)$ is an exceptional extension 
of $\cl$, maximality of $\cl$ implies 
$\cl\cup(\cl_0+W)\subseteq\cl$. The second claim 
follows from
$$
\cl_0+\{(0,b)\mid0\leq b<\ell_2\}\subseteq
(\cl_0+W)\cup\{\cl_0\}
$$
and Lemma~\ref{lem:comparisonVW} which asserts that 
$V\subseteq W$.
\end{proof}

\subsection{Bounding the height}
\label{subsec:BoundHeight}
Our final goal in this section is to establish 
Theorem {\thHeight} in the twisted case. We first define
$$
\cH^-:=\{(a,b)\in\Z^2\mid a<0,\,0<b<\ell_2\}\subseteq\CH.
$$

\begin{lemma}
\label{lem:L1Coll}
Let $\cl$ be a {\mes} on $(\ell_1,\ell_2;c)$ with 
$c\not=0$, $\ell_1\geq3$ and $H(\cl)\geq2\ell_2$. 
If $\cl$ is vertically ordered with $\cl_0=0$, 
then we obtain a sequence $\cl'$ on 
$$
(\ell_1',\ell_2';c')=(\ell_1-1,\ell_2;c)
$$ 
via the following procedure:
\begin{enumerate}
    \item[(i)] Remove the set 
    $\{(0,b)\mid0\leq b<\ell_2\}$ from $\cl$.
    \item[(ii)] For every $\cl_i=(a_i,b_i)\in\CH^-$ 
    put $\cl'_i:=\cl_i+(1,0)$.
    \item[(iii)] For all remaining $\cl_i$ put 
    $\cl_i':=\cl_i$.
\end{enumerate}
If we endow $\cl'$ with the order induced by $\cl$, 
then $\cl'$ defines a {\mes}. We say that $\cl'$ is 
obtained from $\cl$ by {\em collapsing along $\ell_1$}.
\end{lemma}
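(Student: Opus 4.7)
The plan is threefold: verify well-definedness and the correct cardinality, check the exceptionality condition case by case, and then conclude maximality from the cardinality count.

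First, by Corollary~\ref{cor:WBox} the hypothesis $H(\cl)\ge 2\ell_2$ forces $\{(0,b)\mid 0\le b<\ell_2\}\subseteq\cl$, so step~(i) removes exactly $\ell_2$ elements of $\cl$. The shift in~(ii) is manifestly injective on $\cH^-$ and sends it into $\{(a,b)\mid a\le 0,\,0<b<\ell_2\}$; the only possible collision with a surviving unshifted point would occur at $a+1=0$, but these slots are precisely the ones vacated by~(i). Hence $|\cl'|=\ell_1\ell_2-\ell_2=(\ell_1-1)\ell_2$, the maximal length for an exceptional sequence on the collapsed variety $(\ell_1-1,\ell_2;c)$ by \ssect{pic2:KleinSchmidt}.

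For exceptionality, write $\nImm'=\cH\cup\cP'$ for the immaculate locus of $(\ell_1-1,\ell_2;c)$; since $\xa,\xb$ are unchanged, $\cP'\subset\cP$ differs only by removing the two rightmost ``strips'' $\{a=\ell_1-1\}$ and $\{a+b\xa=\ell_1+\xc-1\}$ from $\cP$, while $\cH'=\cH$. For a pair $\cl'_i<\cl'_j$ the difference $\cl'_j-\cl'_i$ equals $\cl_j-\cl_i$ if $\cl_i,\cl_j$ are simultaneously in $\cH^-$ or simultaneously fixed, and differs by $\pm(1,0)$ if exactly one of them lies in $\cH^-$; in every case the original difference belongs to $\nImm$ by exceptionality of $\cl$. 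Differences landing in $\cH$ remain in $\cH'=\cH$ and cause no issue, so the substantial task is to rule out that the (possibly shifted) difference lands on $\cP\setminus\cP'$.

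This boundary analysis is the main obstacle. For pairs at the same level $b$ the hypothesis $H(\cl)\ge 2\ell_2$ provides a point $(a'',b'')\in\cl$ with $b''-b\ge\ell_2$ at every level $b\le\ell_2-1$, so Lemma~\ref{lem:HeightConstraint} sharpens the horizontal spread to $a_j-a_i\le\ell_1-2$; at levels $b\ge\ell_2$ each element of $\cl$ already sits in $\cP$, whose inequalities $-\xb<a<\ell_1-\xb-(b-\ell_2)\xa$ force the same bound directly. For pairs separated vertically by $\ge\ell_2$ the difference lies in $\cP$, and I would combine the $\cP$-inequalities for $\cl_j$ (in particular $a_j<\ell_1-\xb-(b_j-\ell_2)\xa$) with the positional constraints on $\cl_i$ ($a_i\ge 1$ if $\cl_i$ is fixed, $a_i\le -1$ if $\cl_i$ is shifted) and the basic inequality~\eqref{eq:EffIneq} together with $\xb\ge 1$; these exclude both $a_j-a_i=\ell_1-1$ and $(a_j-a_i)+(b_j-b_i)\xa=\ell_1+\xc-1$ after the $\pm(1,0)$ correction. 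Once exceptionality is established, the cardinality count from the first step upgrades $\cl'$ to a \mes\ on $(\ell_1-1,\ell_2;c)$.
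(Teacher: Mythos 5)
Your skeleton is essentially the paper's: split pairs according to which of $\cl_i,\cl_j$ lies in $\cH^-$, observe that the difference changes by $0$ or $\pm(1,0)$, and compare $\nImm$ with $\nImm'$. Your preliminary steps are actually more complete than the paper's, which never spells out why $\{(0,b)\mid 0\le b<\ell_2\}\subseteq\cl$, why the shift creates no collisions, or why the count $(\ell_1-1)\ell_2$ yields maximality; your use of Corollary~\ref{cor:WBox} and the vacated-slot argument is correct. The same-level analysis is also fine (the paper instead rules out a horizontal chain via Corollaries~\ref{coro:NoGaps} and~\ref{coro:SlimTwisted}, but your route through Lemma~\ref{lem:HeightConstraint} and the $\cP$-inequalities gives the same bound $a_j-a_i\le\ell_1-2$).

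The gap is in your enumeration of the dangerous boundary cases for vertically separated pairs. You reduce everything to excluding that the corrected difference lands on $\cP\setminus\cP'$, i.e.\ on the strips $a=\ell_1-1$ or $a+b\xa=\ell_1+\xc-1$. That is the correct target only when the difference is unchanged or increased. When $\cl_i\in\cH^-$ is shifted and $\cl_j$ is not (the paper's Case~3), the difference \emph{decreases} by $(1,0)$: the two right-hand strips are then avoided automatically, and the real danger is crossing the \emph{left} edge of the parallelogram. Concretely, if $a_j-a_i=-\xb+1$, then $\cl_j'-\cl_i'$ has first coordinate $-\xb$ and lies outside $\cP$ entirely --- it is not a point of $\cP\setminus\cP'$, so your list of equalities to exclude never sees it. The exclusion is easy with ingredients you already have on the table ($\cl_j\in\cP$ gives $a_j\ge-\xb+1$, and $a_i\le-1$ forces $a_j-a_i\ge-\xb+2$), but as written the sketch would let this case through. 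A secondary slip: ``$a_i\ge1$ if $\cl_i$ is fixed'' fails for fixed points at level $b_i\ge\ell_2$ (a priori possible, since the height bound $H(\cl)\le 2\ell_2$ is only proved \emph{after} this lemma); there one must instead use $a_i+b_i\xa>0$ from the $\cP$-inequalities for $\cl_i$, which is exactly how the paper kills the skew strip in its Case~1.
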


\begin{proof}
We denote the {co-immaculate} locus of 
$(\ell_1',\ell_2';c')$ by $\nImm'$.
Let $\cl_i'<\cl_j'$ be a pair of elements in $\cl'$ 
coming from $\cl_i=(a_i,b_i)<\cl_j=(a_j,b_j)$ in 
$\cl$. We need to show that $\cl_j'-\cl_i'\in\nImm'$. 
Figure~\ref{fig:L1Collapse} sketches how the 
{co-immaculate} locus adjusts $\nImm'$ under passing 
from $(\ell_1,\ell_2;c)$ to $(\ell_1-1,\ell_2;c)$.

\begin{figure}[ht]
\begin{tikzpicture}[scale=0.4]
\draw[color=oliwkowy!40] (-1.3,-5.0) grid (5.3,8.0);
\immreg{4}{3}{1}{1}
\draw[thick, color=red]
  (-1,3) -- (-1,7);
\draw[thick, color=red]
  (-1,7) -- (3,3);
\draw[thick,black,dotted]
  (4,-4) -- (5,-5) -- (5,0);
\draw[thick,black,dotted]
  (4,3) -- (-1,8) -- (-1,7);
\fill[pattern color=parallelogram!50, pattern=north west lines]
  (-1,7) -- (-1,8) -- (5,2) -- (5,-5) -- (4,-4) -- (4,2) -- cycle;
\fill[thick, color=regii!50]
    (0,6) circle (5pt) (1,5) circle (5pt) (2,4) circle (5pt) 
    (3,3) circle (5pt);
\fill[thick, color=regii!50]
    (4,0) circle (5pt) (4,-1) circle (5pt) (4,-2) circle (5pt) 
    (4,-3) circle (5pt);
\draw[thick, color=origin]
    (4,3) circle (5pt);
\end{tikzpicture}
\caption{$\nImm(5,3;1,1)$ and $\nImm'=\nImm(4,3;1,1)$. 
The additional points in $\nImm(5,3;1,1)$ are marked in 
light blue. The upper left boundary and
the upper right boundary are marked in red.}
\label{fig:L1Collapse}
\end{figure}

\medskip

{\em Case 1:} $\cl_i,\,\cl_j\not\in\CH^-$. Then $\cl_j'-\cl_i'=
\cl_j-\cl_i$. If $\ell_2>b_j-b_i$, then $\cl_j'-\cl_i'\in\CH=\CH'$ 
or $b_j=b_i$. In the latter case we have $a_j-a_i<\ell_1-1$ for 
otherwise, $\cl$ would have a horizontal chain and thus 
$H(\cl)\leq2\ell_2-1$ by Corollary~\ref{coro:SlimTwisted}.

\medskip

If, on the other hand, $b_j-b_i\geq\ell_2$, then $\cl_j'-\cl_i'
\in\nImm'$ unless
$$
a_j-a_i+(b_j-b_i)\xa=\ell_1-1-\xb+\ell_2\xa=\ell_1'-\xb+\ell_2\xa,
$$
that is, $\cl_j-\cl_i$ sits in the upper right boundary of $\CP'$,
cf.\ Figure~\ref{fig:L1Collapse}. Since $\ell_2\leq b_j$ we have 
$a_j+b_j\xa\leq\ell_1-1-\xb+\ell_2\xa$. But $0< a_i+b_i\xa$ -- 
this is clear for $\cl_i\in\cH$ and follows from the defining 
inequalities of the parallelogram if $\cl_i\in\cP$. Hence
$$
a_j-a_i+(b_j-b_i)\xa<a_j+b_j\xa\leq\ell_1-1-\xb+\ell_2\xa,
$$
contradiction.

\medskip

{\em Case 2:} $\cl_i,\,\cl_j\in\CH^-$. Again $\cl_j'-\cl_i'=
\cl_j-\cl_i$, and $\cl_j-\cl_i\in\CH$ or 
$b_j=b_i$. As for Case~1 we find that $\cl_j'-\cl_i'\in\CH'
\subseteq\nImm'$.

\medskip

{\em Case 3:} $\cl_i\in\CH^-$, $\cl_j\not\in\CH^-$. Then 
$\cl_j'-\cl_i'=\cl_j-\cl_i-(1,0)$. If $\ell_2>b_j-b_i$, we 
conclude as in Case 1. 

\medskip

If, on the other hand, $b_j-b_i\geq\ell_2$, then $\cl_j'-\cl_i'
\in\nImm'$ unless
$$
\cl_j-\cl_i=(a_j-a_i,b_j-b_i)=(-\xb+1,b_j-b_i),
$$ 
that is, $\cl_j-\cl_i$ sits in the upper left boundary of $\CP'$,
cf.~Figure~\ref{fig:L1Collapse}. Now $\ell_2<b_j$ implies
$a_j-a_i=-\xb+1\leq a_j$, but $a_i<0$.

\medskip

{\em Case 4:} $\cl_i\not\in\CH^-$, $\cl_j\in\CH^-$. Then 
$\cl_j'-\cl_i'=\cl_j+(1,0)-\cl_i$ and $\ell_2>b_j>b_i\geq0$,
where the middle inequality follows from $a_j<a_i$. 
We conclude again as in Case~1. 
\end{proof}

\begin{corollary}[Theorem {\thHeight}, twisted version]
\label{cor:HeightConstraint}
Let $\cl$ be a {\mes}. Then $H(\cl)\leq2\ell_2$. 
\end{corollary}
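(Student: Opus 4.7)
The plan is to proceed by induction on $\ell_1$, invoking the collapse Lemma~\ref{lem:L1Coll}. The base case $\ell_1 = 2$ follows directly from the explicit classification in Subsection~\ref{subsec:ExamL1}, where every \mes{} has height $\ell_2 + 1 \leq 2\ell_2$.

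For the inductive step ($\ell_1 \geq 3$), I suppose for contradiction that $H(\cl) \geq 2\ell_2 + 1$. After vertically reordering $\cl$ using Theorem~\ref{thm:MESRel} and shifting so that $\cl_0 = 0$, I apply Lemma~\ref{lem:L1Coll} to obtain a \mes{} $\cl'$ on $(\ell_1 - 1, \ell_2; c)$ whose height satisfies $H(\cl') \leq 2\ell_2$ by the inductive hypothesis. Since the top of $\cl$ (at level $\overline{b}(\cl) \geq 2\ell_2$) lies neither in the removed set $\{(0,b) : 0 \leq b < \ell_2\}$ nor in $\CH^-$, it is preserved, so $\overline{b}(\cl') = \overline{b}(\cl)$. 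If $\cl$ contains any level-$0$ point besides $(0,0)$, that point is likewise preserved, forcing $\underline{b}(\cl') = 0$ and hence $H(\cl') \geq 2\ell_2 + 1$, a direct contradiction.

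The main obstacle is the remaining case $\cl \cap [y = 0] = \{(0, 0)\}$, where only $\underline{b}(\cl') = 1$ and (by the inductive hypothesis) $\overline{b}(\cl) = 2\ell_2$ can be concluded. My plan here is to use the bijection $\Phi$ from Lemma~\ref{lem:PhiMap}: each of the $\ell_1 - 1$ classes $[a, 0]$, $a \in \{1, \ldots, \ell_1 - 1\}$, must be represented in $\cl$ at level $\ell_2$ or $2\ell_2$. A count against the width $\ell_1 - \ell_2\xa - 1$ of $\cP \cap [y = 2\ell_2]$ forces at least $\ell_2 \xa \geq 1$ of them to lie at level $\ell_2$; by Corollary~\ref{coro:NoGaps} these form a consecutive block $S_1 \subseteq \{1, \ldots, \ell_1 - 1\}$ with $s_1 := \min S_1 \geq 1$.

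Finally I apply Proposition~\ref{prop:CompShape} to $\cl'$ at $b = \ell_2$: one obtains a horizontal chain $Y_{2\ell_2} \cup X_{2\ell_2}$ of $\ell_1 - 1$ consecutive $a$-coordinates at level $2\ell_2$, with $Y_{2\ell_2}$ strictly to the left of $X_{2\ell_2}$ by Remark~\ref{rem:LR}. Its leftmost coordinate equals $s_1 - 2\xb$, forcing the rightmost to be $s_1 - 2\xb + \ell_1 - 2$; but $X_{2\ell_2} \subseteq \cP \cap [y = 2\ell_2]$ bounds the rightmost by $\ell_1 - \xb - \ell_2\xa - 1$. Rearranging yields $s_1 \leq 1 - \xc$, contradicting $s_1 \geq 1$ since the basic inequality ensures $\xc \geq \xa \geq 1$ in the twisted case $c \neq 0$.
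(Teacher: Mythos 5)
Your proof is correct, and it shares its skeleton with the paper's: both induct on $\ell_1$ with the base case $\ell_1=2$ read off from the classification in \ssect{subsec:ExamL1}, and both feed a hypothetical \mes\ of height $>2\ell_2$ into the collapse Lemma~\ref{lem:L1Coll}. The difference lies in the endgame. The paper notes that $W\subseteq\cl$ (Corollary~\ref{cor:WBox}) keeps every row $1,\dots,\ell_2-1$ inhabited after collapsing, so $H(\cl)-1\leq H(\cl')\leq 2\ell_2<H(\cl)$ forces $H(\cl')=2\ell_2$ and $\cl_0'=(a,1)$ with $a\leq0$; one further application of Corollary~\ref{cor:WBox}, now to $\cl'$, produces the point $\big(a-(\ell_2-1)\xa,\ell_2\big)\in\cl$, which violates the left edge $-\xb<x$ of $\cP$ by the basic inequality~\eqref{eq:EffIneq}. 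You instead split on whether $[y=0]\cap\cl=\{(0,0)\}$, settle the easy case by tracking $\underline b$ and $\overline b$ through the collapse, and in the hard case combine the $\Phi$-count from Lemma~\ref{lem:PhiMap} with Proposition~\ref{prop:CompShape} and Remark~\ref{rem:LR} to push the right end of the level-$2\ell_2$ chain beyond the right edge of $\cP$. Both contradictions are the basic inequality in the guise of $\xc\geq\xa\geq1$; yours costs a case distinction and a counting argument that the paper's more streamlined bookkeeping avoids, but it does not need the inequality $H(\cl)-1\leq H(\cl')$. One spot to tighten: in the hard case you assert $\underline b(\cl')=1$ without justification. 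This requires knowing that $[y=1]\cap\cl$ is not contained in $\{(0,1)\}$, which again follows from Corollary~\ref{cor:WBox} applied to $\cl$ (legitimate since $H(\cl)\geq2\ell_2$): it yields $(-1,1)\in\cl$, which the collapse sends to $(0,1)\in\cl'$.
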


\begin{proof}
We assume that $\cl$ is vertically ordered and starts at 
$\cl_0=0$. We proceed by induction on $\ell_1\geq2$.

\medskip

If $\ell_1=2$, then $H(\cl)\leq\ell_2+1<2\ell_2$, 
cf.\ \ssect{subsec:ExamL1}.

\medskip

Next assume that $\ell_1\geq3$. Let $\cl$ be a {\mes} with 
$H(\cl)>2\ell_2$. By Lemma~\ref{lem:L1Coll} we can 
collapse $\cl$ along $\ell_1$ and obtain the {\mes} $\cl'$ 
on $(\ell_1-1,\ell_2;c)$ with $2\ell_2\leq H(\cl)-1\leq H(\cl')$.
Here, the latter inequality is a consequence of 
$W\subseteq\cl$ from Corollary~\ref{cor:WBox}. 
Further, $H(\cl')\leq2\ell_2$ by induction hypothesis so that 
$H(\cl')=2\ell_2<H(\cl)$. In particular, the collapsed 
sequence $\cl'$ starts at $\cl_0'=(a,1)$ with $a\leq0$.

\medskip

By Corollary~\ref{cor:WBox}, $\cl_0'+W=(a,1)+W\subseteq\cl'$. 
Therefore
$$
(a,1)+\big(-(\ell_2-1)\xa,\ell_2-1\big)=\big(-(\ell_2-1)\xa+a,\ell_2\big)\in\cl'.
$$
By design of the collapsing procedure, $(-(\ell_2-1)\xa+a,\ell_2)$ 
is also in $\cl$ whence 
$$
-\xb<-(\ell_2-1)\xa+a\leq-(\ell_2-1)\xa.
$$
But this contradicts the basic inequality~\eqref{eq:EffIneq}.
\end{proof}

\medskip

\section{The classification for the twisted case}
\label{sec:completeClass}
We now discuss the twisted analogues of
heLexing (cf.\ \ssect{subsec:HeLexPC}) and 
the structure of {\mes}s (cf.\ \ssect{subsec:ClassProdCase}).

\medskip

\subsection{HeLexing}
\label{subsec:HeLexing}
{Let $\cl$ be a \mes\ starting at 
the origin which by Theorem~\ref{thm:MESRel} we may 
take to be vertically ordered.} 
The helix operator $\helix$ sends $\cl_0$, the 
leftmost element of the lowest row, to the point
$(\ell_1-\xb,\ell_2)$ at level $\ell_2$,
using the terminology of \ssect{subsec:ExChain}.

\medskip

In \ssect{subsec:HeLexPC} we considered
$\heLex$ which was the helix operator $\helix$ followed by 
vertically lexicographic reordering and a shift sending the 
resulting $\cl_0$ back to the origin. The proof of
Proposition~\ref{thm:HeLexingPC} applies verbatim 
and yields the

\begin{theorem}[Theorem~\thHelex, twisted version]
\label{thm:HeLexing}
Every \mes\ on $(\ell_1,\ell_2;c)$, $c\not=0$, can be 
transformed into a trivial sequence by successively applying 
$\heLex$ at most $\ell_1\ell_2$ times.
\end{theorem}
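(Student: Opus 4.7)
The plan is to mirror the argument for the product case (Theorem~\ref{thm:HeLexingPC}), adapting it to the twisted setting. First, by Theorem~\ref{thm:MESRel} every \mes\ on $(\ell_1,\ell_2;c)$ is automatically vertically orderable, so we may assume that $\cl$ is vertically ordered with $\cl_0=0$. Since $\helix(\cl)$ is again a \mes\ and thus vertically orderable by the same theorem, $\heLex(\cl)$ is well defined as a \mes\ starting at the origin, and by Corollary~\ref{cor:HeightConstraint} it again satisfies $H(\heLex(\cl))\leq 2\ell_2$.

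The key step is to show that $\heLex$ strictly decreases the lexicographic pair
$$
\Phi(\cl):=\bigl(H(\cl),\,\sharp([y=\underline b(\cl)]\cap\cl)\bigr)
$$
whenever $\cl$ is not vertically trivial. By \ssect{subsec:TrivMes} this nontriviality is equivalent to $H(\cl)>\ell_2$. Applying $\helix$ removes $\cl_0=0$ from the bottom row and appends the new element $(\ell_1-\xb,\ell_2)$. In the nontrivial case one has $\ell_2\leq\overline{b}(\cl)=H(\cl)-1$, so the appended point sits at or below the top and the height cannot grow. There are two subcases. Either $\cl_0$ was the only element of $\cl$ at level $0$, in which case the bottom row becomes empty and the height strictly decreases; or the bottom row contained further elements, in which case the height remains unchanged but the number of elements on the bottom row drops by one. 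After vertical reordering and the subsequent shift that brings the new starting element back to the origin, we obtain $\Phi(\heLex(\cl))<\Phi(\cl)$ lexicographically.

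The values $\Phi$ attains on nontrivial sequences lie in $\{\ell_2+1,\ldots,2\ell_2\}\times\{1,\ldots,\ell_1\}$, a set of cardinality exactly $\ell_1\ell_2$. Hence the sequence of $\heLex$-iterates must reach a vertically trivial \mes\ after at most $\ell_1\ell_2$ steps. The main point requiring care is verifying that the appended point $(\ell_1-\xb,\ell_2)$ behaves as claimed under the vertical reordering---in particular that it does not collide with any existing element of $\helix(\cl)$, which follows from the bijectivity in Lemma~\ref{lem:PhiMap}, and that the normalising shift in each subcase matches the description above. Both of these are routine provided one inspects the shape of the immaculate locus from \ssect{subsec:ImmLoc} and uses that $\underline b(\cl)=0$ throughout the iteration.
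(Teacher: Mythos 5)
Your proposal is correct and follows essentially the same route as the paper: the paper's twisted-case proof simply declares that the product-case argument (helix either empties the bottom row, decreasing the height, or reduces the bottom-row count by one) applies verbatim, which is exactly your monovariant $\bigl(H(\cl),\sharp([y=\underline b]\cap\cl)\bigr)$. Your explicit counting of the possible values of this pair and the appeal to Theorem~\ref{thm:MESRel} for vertical orderability of the iterates are just slightly more detailed bookkeeping of what the paper leaves implicit.
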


\medskip

\subsection{The classification}
\label{subsec:mesClassification}
Again we can establish an algorithmic recipe 
for the construction of \mes s.

\medskip

The definition of admissible sets and 
complementing partners carries over from 
\ssect{subsec:ClassProdCase} except for 
{the modified shape of $\DeltaUp$, namely
$$
\DeltaUp=\{(a,b)\in\cP\kst b\geq\ell_2\}
=\{(a,b)\in\Z^2\mid b\geq\ell_2,\; -\xb<a
\mbox{ and }a+b\xa<\ell_1+\xc\}
$$
and}
(Aiii) which gets replaced by
\begin{enumerate}
\item[(Aiii')]
for each $k\geq\ell_2$ and $(x,k+1)\in X_{k+1}$ 
the points $(x,k),\,(x+1,k),\,\ldots,\,(x+\xa,k)$
belong to $X_k$.
\end{enumerate}
See \ssects{subsec:ExamL1} and 
~\eqref{subsec:Exam33} for examples. 
{Note that $X_{\ell_2}$ being right-aligned 
means now that $(\ell_1-1-\xb,\ell_2)\in X_{\ell_2}$.}
Then we obtain the

\begin{theorem}[Theorem {\thClass}, twisted version]
\label{thm:mesAfterAdShift}
If $X\subseteq \DeltaUp$ is 
admissible and $Y\subseteq \Z^2$ a 
complementing partner, then the union of
$$
\clDown:=Y+(\xb,-\ell_2)\subseteq\cH\cup[y=0] 
\quad\mbox{and}\quad 
\clUp:=X\subseteq\DeltaUp
$$
together with vertical order yields a 
{\mes} $\cl$ with $\cl_0=0$. {Moreover, any
vertically ordered {\mes} starting at the origin 
arises this way.}
\end{theorem}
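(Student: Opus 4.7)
My plan is to mirror the proof of the product-case analogue (Theorem~\ref{thm:mesAfterAdShiftPC}), with the slope~$\xa$ of the right edge of the parallelogram~$\cP$ replacing the vertical edge used there; the only substantive change is the use of the tilted admissibility condition (Aiii') in place of (Aiii).

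For the existence direction, I would set $\cl:=\clDown\cup\clUp$ with vertical order. Condition (Aiv) combined with (Cvii) forces $Y_{\ell_2}$ to begin at $(-\xb,\ell_2)$, so the shift by $(\xb,-\ell_2)$ sends it to the origin, giving $\cl_0=0$; summing the lengths of the $\ell_2$ horizontal chains in (Cvii) gives $\sharp\cl=\ell_1\ell_2$. For exceptionality I would check that $\cl_j-\cl_i\in\nImm$ for all $i<j$, splitting into three cases. Same-level differences lie in $\{(k,0):0<k<\ell_1\}$ by (Aii), (Cvi), (Cvii); differences with $0<b_j-b_i<\ell_2$ lie automatically in $\cH$; and the crucial case $b_j-b_i\geq\ell_2$ (with $\cl_i\in\clDown$ and $\cl_j\in\clUp$) I would handle by induction on $m:=b_j-b_i-\ell_2$. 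The base $m=0$ produces $\cl_j-\cl_i=(k-\xb,\ell_2)$ with $1\leq k\leq\ell_1-1$, visibly in $\DeltaUp\subseteq\nImm$. For $m\geq 1$, writing $\cl_j=(x,b_j)$, condition (Aiii') yields both $(x,b_j-1),(x+\xa,b_j-1)\in X$; the induction hypothesis then puts $(x-a_i,b_j-1-b_i)$ and $(x+\xa-a_i,b_j-1-b_i)$ into $\DeltaUp$, giving $-\xb<x-a_i$ from the first and $(x-a_i)+(b_j-b_i)\xa<\ell_1+\xc$ from the second, so $\cl_j-\cl_i\in\DeltaUp\subseteq\nImm$.

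For the converse direction, given a vertically ordered \mes\ $\cl$ with $\cl_0=0$, I would set $X:=\cl\cap\DeltaUp$ and $Y:=(\xb,\ell_2)+\bigl(\cl\cap(\cH\cup[y=0])\bigr)$, and verify the admissibility conditions in turn. Condition (Ai) is immediate from $H(\cl)\leq 2\ell_2$ (Corollary~\ref{cor:HeightConstraint}); (Aii) and (Cvii) follow from the twisted no-gap statement in Remark~\ref{rem:LR} together with Proposition~\ref{prop:CompShape}; (Cvi) is the ``$Y$ left of $X$'' statement in Remark~\ref{rem:LR}; and (Aiv) is forced by the chain structure at level $\ell_2$ combined with $\cl_0=0$.

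The hard part will be verifying (Aiii') in the converse. I would repeat the chain-endpoint computation from the last paragraph of the product-case proof, but now against the tilted right boundary of $\DeltaUp$ of slope $-\xa$: exceptionality of the difference between the leftmost element of $X_{b+\ell_2+1}$ and the rightmost element of $\cl$ at level $b-1$ should force an inequality of the form $a_b+r_b\geq a_{b-1}+r_{b-1}+\xa$ rather than the product-case inequality $a_b+r_b\geq a_{b-1}+r_{b-1}$, and this $\xa$-shift translates directly into the containment $\{(x+j,k):0\leq j\leq\xa\}\subseteq X_k$ whenever $(x,k+1)\in X_{k+1}$, which is precisely (Aiii').
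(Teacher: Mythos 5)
Your existence direction and the bulk of your converse follow the paper's own proof essentially verbatim: the same induction on $m=b_j-b_i-\ell_2$ with base case landing in $(-\xb,\ell_2)+\{(1,0),\ldots,(\ell_1-1,0)\}\subseteq\DeltaUp$, the same use of the two extreme points $(x,b_j-1)$ and $(x+\xa,b_j-1)$ supplied by (Aiii') to trap $\cl_j-\cl_i$ between the left wall $a>-\xb$ and the tilted wall $a+b\xa<\ell_1+\xc$ of $\cP$, and the same references for (Ai), (Aii), (Aiv), (Cv), (Cvi), (Cvii). (Minor slip: to invert $\clDown=Y+(\xb,-\ell_2)$ you must set $Y:=(-\xb,\ell_2)+\bigl(\cl\cap(\cH\cup[y=0])\bigr)$, not $(\xb,\ell_2)+\cdots$.)

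The genuine gap is in your sketch of (Aiii'), which you rightly identify as the crux. The inequality you propose, $a_b+r_b\geq a_{b-1}+r_{b-1}+\xa$, is false: for the sequence $\cl^3$ on $(3,3;\xa,\xb)=(3,3;1,2)$ from Subsection~\ref{subsec:Exam33} one has $a_0=r_0=0$, $a_1=-1$, $r_1=1$, so $a_1+r_1=0=a_0+r_0$ and your inequality would demand $0\geq 1$. Since the layers $X_k$ are intervals, (Aiii') is equivalent to \emph{two} endpoint inequalities, and the $\xa$ sits in the other one: the left endpoints of the $X_k$ (equivalently the right endpoints $a_b+r_b$ of the rows of $\cl$) satisfy only $a_b+r_b\geq a_{b-1}+r_{b-1}$, coming from the left wall $a>-\xb$ of $\cP$ applied to a difference of height $\ell_2+1$; it is the \emph{right} endpoints of the $X_k$ (equivalently the left endpoints of the rows) that pick up the shift, $a_b+\xa\leq a_{b-1}$, and this is the computation against the tilted wall $a+(\ell_2+1)\xa<\ell_1+\xc$ (compare Corollary~\ref{coro:ExtWBox}, which records exactly this asymmetric pair of inequalities). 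So you need a second, separate computation pairing the rightmost element of $X_{b+\ell_2+1}$ against the \emph{leftmost} element of the row below; the single difference you propose cannot yield (Aiii'). The paper also inserts a preliminary step you omit: one must first rule out $X_{b+\ell_2}=\varnothing$ while $X_{b+\ell_2+1}\neq\varnothing$ (it does so via Corollary~\ref{coro:SlimTwisted}, since an empty $X_{b+\ell_2}$ would force a full horizontal chain at level $b$ and hence cap the height below $b+\ell_2+1$).
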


\begin{proof}
The proof goes along the lines of the
proof of Theorem~\ref{thm:mesAfterAdShiftPC}.

\medskip

If $X=\varnothing$, then $Y$ consists of 
$\ell_2$ consecutive horizontal chains.
Shifting down by $(\xb,-\ell_2)$ yields 
a vertically trivial sequence starting 
at the origin. 

\medskip

We therefore assume that $X\not=\varnothing$.
We order the set $\clDown\cup\clUp$ 
vertically to obtain the sequence $\cl$. 
From Definition~\ref{def:HorChain},
Definition~\ref{def:AdmSetPC}~(iv), and
Definition~\ref{def:SuppPartnerPC}~(Cvii), 
it is clear that $Y_{\ell_2}\cup X_{\ell_2}$ 
forms a horizontal chain which ends at 
$(\ell_1-1-\xb,\ell_2)$. Consequently, 
$Y_{\ell_2}$ starts at $(-\xb,\ell_2)$ 
and $\cl_0=0$.

\medskip

For $\cl_i<\cl_j$ we have to show that 
$\cl_j-\cl_i\in\nImm$. 
If $\cl_i$, 
$\cl_j\in[y=b]$, this follows from
$\cl_i,\cl_j\in Y_{b+\ell_2}+(\xb,-\ell_2)$
or $\cl_i,\cl_j\in X_b$, (Cvi) 
and (Cvii). If they are at different levels 
$0\leq b_i<b_j\leq 2\ell_2-1$, the only
critical case arises from $b_j-b_i\geq \ell_2$
which implies $0\leq b_i\leq\ell_2-1$ and 
$\ell_2\leq b_j\leq2\ell_2-1$. This means that
$$
\cl_i\in[y=b_i]\cap\clDown=Y_{b_i+\ell_2}+(\xb,-\ell_2)
\quad\mbox{and}\quad\cl_j\in[y=b_j]\cap\clUp=X_{b_j}.
$$
We proceed via induction over $m:=b_j-b_i-
\ell_2\geq 0$. 

\medskip

If $m=0$, then $\cl_i\in Y_{b_j}+(\xb,-\ell_2)$ 
whence
$$
\cl_j-\big(\cl_i+(-\xb,\ell_2)\big)\in X_{b_j}-Y_{b_j}\subseteq
\{(1,0),\,\ldots, (\ell_1-1,0)\}.
$$
In particular,  
$\cl_j-\cl_i\in (-\xb,\ell_2)+\{(1,0),\,\ldots,
(\ell_1-1,0)\}\subseteq\DeltaUp$.

\medskip

Next let $m\geq 1$. Writing 
$B:=\{(0,-1),(1,-1),\ldots,(\xa,-1)\}$,
(Aiii') implies that
$\cl_j+B\in\clUp$. On the other hand,
the induction hypothesis implies $\cl_j-\cl_i+B
\in\nImm$, and since $b_j-1-b_i-\ell_2\geq0$
we even have $\cl_j-\cl_i+B\in\DeltaUp$.
By definition of $\DeltaUp$ we also have
$\cl_j-\cl_i\in\DeltaUp$, whence $\cl_j-\cl_i
\in\nImm$.

\medskip

Finally, we want to show that any vertically
ordered {\mes} $\cl$ with $\cl_0=0$ arises 
this way. For this, we let
$$
Y:=(-\xb,\ell_2)+\big(\cl\cap(\cH\cup[y=0])\big)
\quad\mbox{and}\quad X:=\cl\cap\DeltaUp,
$$ 
where we identify the sequence $\cl$ with its
underlying set. 

\medskip

If $\cl$ is vertically trivial, 
then $Y=(-\xb,\ell_2)+\cl$ and $X=\varnothing$. 
We therefore assume that $\cl$ is not trivial
and check that $X$ is admissible with complementing
partner $Y$. By Lemma~\ref{lem:HeightConstraint},
$H(\cl)\leq2\ell_2$. From this and the definition 
of $Y$, Properties (Ai) and (Cv) follow. Furthermore, 
(Aii) follows from Corollary~\ref{coro:NoGaps}, while 
(Cvii) is a consequence of Proposition~\ref{prop:CompShape}.

\medskip

By Corollary~\ref{coro:VertOcc}, $[y=b]\cap\cl\not=
\varnothing$ for $0\leq b\leq H(\cl)-1$. Furthermore,
we have $\nl=2\ell_2-H(\cl)$ horizontal lines by 
Corollary~\ref{coro:ExHorChains}. 

\medskip

{From \ssect{subsec:ClassProdCase} and in particular 
Inequality~\eqref{eq:rb} on Page~\pageref{eq:rb} we recall 
the following notation:} For $b=0,\ldots,
\ell_2-\nl -1$, we let $(a_b,b)$ and $(a_b+r_b,b)\in\cl$ 
be the minimal and maximal element of $[y=b]\cap\cl$,
that is, $[y=b]\cap\cl=\{(a_b,b),\ldots,
(a_b+r_b,b)\}$ for some $0\leq r_b<\ell_1-1$. {It follows that} $(r_0+1-\xb,\ell_2),\ldots,(\ell_1-1-\xb,\ell_2)
=[y=\ell_2]\cap\cl$ which implies (Aiv). 

\medskip

For all other $0<b<\ell_2-\nl -1$, 
$Y_{b+\ell_2}=(-\xb,\ell_2)+[y=b]\cap\cl$ is to the 
left, cf.\ Remark~\ref{rem:LR}. This implies (Cvi).

\medskip

It remains to check (Aiii'). Let $(x,b+\ell_2+1)\in 
X_{b+\ell_2+1}$ for some $0\leq b$. We need to show 
that $(x,b+\ell_2)$ and $(x+\xa,b+\ell_2)$ belong to 
$\cl$. We first note that $X_{b+\ell_2}\neq\varnothing$ 
for otherwise, $Y_{b+\ell_2}$ and thus
$[y=b]\cap\cl$ would consist of $\ell_1$ 
consecutive points. Hence $X_{b+\ell_2+1}$ 
would be empty by Corollary~\ref{coro:SlimTwisted}, 
which is absurd.

\medskip

Next, we show that $(x+\xa,b+\ell_2)\in X_{b+\ell_2}$. 
Assume otherwise. Since $X_{b+\ell_2}\neq\varnothing$ 
and $\sharp(Y_{b+\ell_2}\cup X_{b+\ell_2})=\ell_1$ 
this would imply $(x+\xa-\ell_1,b+\ell_2)\in Y_{b+\ell_2}$, 
or equivalently, $(x+\xa-\ell_1+\xb,b)\in\cl$. {This implies 
that}
$$
(x,b+\ell_2+1)-(x+\xa-\ell_1+\xb,b)=
(\ell_1-\xa-\xb,\ell_2+1)\in\DeltaUp.
$$
However, the rightmost element of $\DeltaUp$ is 
$(\ell_1-\xa-1-\xb,\ell_2+1)$ whence again 
a contradiction.

\medskip

Finally, we show that $(x,b+\ell_2)\in X_{b+\ell_2}$.
Again, assume otherwise. Since $(x+\xa,b+\ell_2)
\in X_{b+\ell_2}$ 
and $\sharp(Y_{b+\ell_2}\cup X_{b+\ell_2})=\ell_1$, 
this would imply $(x,b+\ell_2)\in Y_{b+\ell_2}$ 
whence $(x+\xb, b)\in\cl$.
However, this means that
$$
(x,b+\ell_2+1)-(x+\xb,b)=(-\xb,\ell_2+1)\in\DeltaUp.
$$
But $[x=-\xb]\cap\DeltaUp$ lies in the boundary of
$\DeltaUp$ whence a contradiction.
\end{proof}

{As} $(x,k+1)\in X_{k+1}$
implies $(x,k)\in X_k$ by (Aiii'), 
Theorem~\ref{thm:mesAfterAdShift}
immediately yields the following 

\begin{corollary}
\label{coro:ExtWBox}
On $(\ell_1,\ell_2;c)$ let $\cl$ be a vertically ordered {\mes} 
with $\cl_0=0$, and let $\nl=2\ell_2-H(\cl)$. Then
$$
a_{i-1}+r_{i-1}\leq a_i+r_i\quad\mbox{and}\quad 
a_i+\xa\leq a_{i-1}
$$
for $i=1,\ldots,\ell_2-1-\nl$.
\end{corollary}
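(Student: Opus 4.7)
The plan is to apply Theorem~\ref{thm:mesAfterAdShift} to realise $\cl$ (which is already vertically ordered and starts at the origin) as the union $\cl_\downarrow \cup \cl_\uparrow$ arising from some admissible set $X \subseteq \DeltaUp$ and its complementing partner $Y$. Both inequalities will then become direct transcriptions of condition~(Aiii') of Definition~\ref{def:AdmSetPC}.

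First I would set up the dictionary between the rows $[y=i]\cap\cl$ and the levels $X_{i+\ell_2}$, $Y_{i+\ell_2}$. For each partial-row index $i \in \{0, 1, \ldots, \ell_2 - \nl - 1\}$, the row $[y=i]\cap\cl$ equals the $r_i+1$ consecutive points from $(a_i,i)$ to $(a_i+r_i,i)$, so by the construction $Y=(-\xb,\ell_2)+(\cl\cap(\cH\cup[y=0]))$, we have $Y_{i+\ell_2}=\{(a_i-\xb+j,\,i+\ell_2)\mid j=0,\ldots,r_i\}$. Combining the chain property~(Cvii) with the left-alignment~(Cvi) yields that $Y_{i+\ell_2}\cup X_{i+\ell_2}$ is the full horizontal chain of $\ell_1$ consecutive points based at $(a_i-\xb,i+\ell_2)$. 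Consequently, the leftmost and rightmost points of $X_{i+\ell_2}$ have $x$-coordinates
\[
\ell^{\min}_i := a_i+r_i+1-\xb,\qquad \ell^{\max}_i := a_i+\ell_1-1-\xb,
\]
respectively.

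Now let $i\in\{1,\ldots,\ell_2-1-\nl\}$, so that both $X_{i+\ell_2}$ and $X_{i-1+\ell_2}$ are non-empty. For the second inequality, I would apply~(Aiii') to the rightmost point $(\ell^{\max}_i,\, i+\ell_2)\in X_{i+\ell_2}$ with $k = i-1+\ell_2$; this forces $(\ell^{\max}_i+\xa,\, i-1+\ell_2)\in X_{i-1+\ell_2}$, so $\ell^{\max}_{i-1}\geq \ell^{\max}_i+\xa$, giving $a_{i-1}\geq a_i+\xa$. For the first inequality, I would apply the same condition to the leftmost point $(\ell^{\min}_i,\, i+\ell_2)\in X_{i+\ell_2}$; its weakest consequence $(\ell^{\min}_i,\, i-1+\ell_2)\in X_{i-1+\ell_2}$ yields $\ell^{\min}_{i-1}\leq \ell^{\min}_i$, i.e.\ $a_{i-1}+r_{i-1}\leq a_i+r_i$.

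There is no genuine obstacle: once the dictionary $\cl\leftrightarrow(X,Y)$ is in place, both inequalities are a one-line unpacking of~(Aiii'). The only care needed is book-keeping: one must verify that the index range $i\in\{1,\ldots,\ell_2-1-\nl\}$ is precisely the range for which both levels $i+\ell_2$ and $i-1+\ell_2$ carry non-empty layers of~$X$, which follows from the fact that $X_k\neq\varnothing$ exactly for $k=\ell_2,\ldots,2\ell_2-\nl-1$.
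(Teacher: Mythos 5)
Your proposal is correct and follows essentially the same route as the paper: both describe $X_{i+\ell_2}$ explicitly as the chain from $(a_i+r_i+1-\xb,\,i+\ell_2)$ to $(a_i+\ell_1-1-\xb,\,i+\ell_2)$ using (Cvi) and (Cvii), and then read off both inequalities from (Aiii') applied to the extremal points of that layer. The paper phrases the application of (Aiii') as a single containment of the whole shifted-and-widened layer in $X_{i+\ell_2-1}$, but this is only a cosmetic difference from your endpoint-by-endpoint argument.
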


\begin{proof}
By (Cvi) and (Cvii) it follows for $i=0,\ldots,\ell_2-1-\nl$ that
$$
X_{i+\ell_2}=\{(a_i+r_i+1-\xb,i+\ell_2),\ldots,
(a_i+\ell_1-1-\xb,i+\ell_2)\}.
$$
Now if $i>0$, then 
$$
\{(a_i+r_i+1-\xb,i+\ell_2-1),\ldots,
(a_i+\ell_1-1-\xb+\xa,i+\ell_2-1)\}\subseteq X_{i+\ell_2-1}
$$
by (Aiii'). In particular, $a_{i-1}+r_{i-1}\leq a_i+r_i$ and
$a_i+\xa\leq a_{i-1}$.
\end{proof}

\medskip

\section{Generating the derived category}
\label{sec:GenDerCat}
Finally, we set out to prove fullness of any nontrivial 
{\mes} $\cl$ on $(\ell_1,\ell_2;c)$ with $c\not=0$. 

\medskip

{Since $V\subseteq W$, 
a $W$-{\wbox} based at $(a,b)\in\cl$ generates 
the point $(a-\xb,b+\ell_2)$ in $\CD(\ell_1,\ell_2;c)$
by Corollary~\ref{coro:resol}.} In particular, 
the standard rectangle $\Rect_{\ell_1,\ell_2}$ generates
the Picard group in the twisted case, too.

\medskip 

Furthermore, Corollary~\ref{coro:ExtWBox} implies
that $\cl$ contains the set
\begin{equation}
\label{eq:ExtWbox}
\bigcup_{i=1}^\nl C_i
\cup\{(a,b)\in\Z^2\mid0<b<\ell_2-\nl,\,
-b\xa\leq a\leq a_b+r_b\}
\end{equation}

\medskip

{where again $\nl=2\ell_2-H(\cl)$, $C_i$ is a horizontal 
chain in $[y=\ell_2-1-i]\cap\cl$ if $\nl\geq1$, and $r_b$ was defined in Inequality~\eqref{eq:rb} on Page~\pageref{eq:rb}.}
Filling these lines via the $C_i$ shows that $\cl$
contains the $W$-{\wbox}s centered at $[y=0]\cap\cl$.

\medskip

\begin{example}
\label{exam:Exam33}
We illustrate our generation procedure on 
$(\ell_1,\ell_2;\xa,\xb)=(3,3;1,1)$ for the {\mes} 
$$
\cl=\big(0,\,(-1,1),\,(0,1),\,(1,2),\,(2,2),\,
(3,2),\,(0,3),\,(1,3),\,(0,4)\big).
$$
First, 
$$
W=W(3;1)=\{(-1,1),\,(0,1),\,(-2,2),\,(-1,2),\,(0,2)\}.
$$
Filling the line $[y=2]$ in (a) shows that $\langle\cl\rangle$ 
contains $W$ based at the origin. Hence we can 
generate in (b) the point $(-1,3)$ which we use to 
fill the line $[y=3]$ in (c). Therefore, the 
$W$-boxes based at $[y=1]\cap\cl$ are contained 
in $\langle\cl\rangle$. They generate the points 
$(-2,4)$ and $(-1,4)$ in (d) so that together with 
$(0,4)\in\cl$ we fill the line $[y=4]$ in (e), too. 
It follows that $(-2,2)+\Rect_{3,3}
\subseteq\langle\cl\rangle$ whence $\cl$ is full.

\begin{figure}[ht]
\newcommand{\spaceA}{\hspace*{1.5em}}
\newcommand{\scaleA}{0.3}
\begin{tikzpicture}[scale=\scaleA]
\draw[color=oliwkowy!40] (-3.3,-1.3) grid (4.3,5.3);
\fill[color=green!20]
  (-2,2) -- (0,2) -- (0,0);
\draw[thick, color=red]
  (-3,2) -- (4,2);
\fill[thick, color=origin]
  (0,0) circle (9pt);
\fill[thick, color=black]
  (-1,1) circle (9pt) (0,1) circle (9pt) (1,2) circle (9pt)
  (2,2) circle (9pt) (3,2) circle (9pt) (0,3) circle (9pt)
  (1,3) circle (9pt) (0,4) circle (9pt);
\draw[thick, color=black]
   (1,-5) node{(a)};
\end{tikzpicture}
\spaceA
\begin{tikzpicture}[scale=\scaleA]
\draw[color=oliwkowy!40] (-3.3,-1.3) grid (4.3,5.3);
\draw[thick, color=blue]
  (-3,2) -- (5,2);
\draw[thick, color=red]
  (0,0) -- (-1,1) -- (0,1) -- (-1,2) -- (0,2) -- (-1,3);
\fill[thick, color=origin]
  (0,0) circle (9pt);
\fill[thick, color=black]
  (-1,1) circle (9pt) (0,1) circle (9pt) (1,2) circle (9pt)
  (2,2) circle (9pt) (3,2) circle (9pt) (0,3) circle (9pt)
  (1,3) circle (9pt) (0,4) circle (9pt);
\fill[thick, color=red]
  (-2,2) circle (9pt) (-1,2) circle (9pt) (0,2) circle (9pt);
\draw[thick, color=black]
   (1,-5) node{(b)};
\end{tikzpicture}
\spaceA
\begin{tikzpicture}[scale=\scaleA]
\draw[color=oliwkowy!40] (-3.3,-1.3) grid (4.3,5.3);
\draw[thick, color=blue]
  (-3,2) -- (4,2);
\draw[thick, color=blue]
  (0,0) -- (-1,1) -- (0,1) -- (-1,2) -- (0,2) -- (-1,3);
\draw[thick, color=red]
  (-3,3) -- (4,3);
\fill[thick, color=origin]
  (0,0) circle (9pt);
\fill[thick, color=black]
  (-1,1) circle (9pt) (0,1) circle (9pt) (1,2) circle (9pt)
  (2,2) circle (9pt) (3,2) circle (9pt) (0,3) circle (9pt)
  (1,3) circle (9pt) (0,4) circle (9pt);
\fill[thick, color=red]
  (-2,2) circle (9pt) (-1,2) circle (9pt) (0,2) circle (9pt)
  (-1,3) circle (9pt);
\draw[thick, color=black]
   (1,-5) node{(c)};
\end{tikzpicture}
\spaceA
\begin{tikzpicture}[scale=\scaleA]
\draw[color=oliwkowy!40] (-3.3,-1.3) grid (4.3,5.3);
\draw[thick, color=blue]
  (-3,2) -- (4,2);
\draw[thick, color=blue]
  (0,0) -- (-1,1) -- (0,1) -- (-1,2) -- (0,2) -- (-1,3);
\draw[thick, color=blue]
  (-3,3) -- (4,3);
\draw[thick, color=red]
  (-1,1) -- (-2,2) -- (-1,2) -- (-2,3) -- (-1,3) -- (-2,4);
\draw[thick, color=red]
  (0,1) -- (-1,2) -- (0,2) -- (-1,3) -- (0,3) -- (-1,4);
\fill[thick, color=origin]
  (0,0) circle (9pt);
\fill[thick, color=black]
  (-1,1) circle (9pt) (0,1) circle (9pt) (1,2) circle (9pt)
  (2,2) circle (9pt) (3,2) circle (9pt) (0,3) circle (9pt)
  (1,3) circle (9pt) (0,4) circle (9pt);
\fill[thick, color=red]
  (-2,2) circle (9pt) (-1,2) circle (9pt) (0,2) circle (9pt)
  (-1,3) circle (9pt) (-2,3) circle (9pt) (-3,3) circle (9pt);
\draw[thick, color=black]
   (1,-5) node{(d)};
\end{tikzpicture}
\spaceA
\begin{tikzpicture}[scale=\scaleA]
\draw[color=oliwkowy!40] (-3.3,-1.3) grid (4.3,5.3);
\draw[thick, color=blue]
  (-3,2) -- (4,2);
\draw[thick, color=blue]
  (0,0) -- (-1,1) -- (0,1) -- (-1,2) -- (0,2) -- (-1,3);
\draw[thick, color=blue]
  (-1,1) -- (-2,2) -- (-1,2) -- (-2,3) -- (-1,3) -- (-2,4);
\draw[thick, color=blue]
  (0,1) -- (-1,2) -- (0,2) -- (-1,3) -- (0,3) -- (-1,4);
\draw[thick, color=blue]
  (-3,3) -- (4,3);
\draw[thick, color=red]
  (-3,4) -- (4,4);
\fill[thick, color=origin]
  (0,0) circle (9pt);
\fill[thick, color=black]
  (-1,1) circle (9pt) (0,1) circle (9pt) (1,2) circle (9pt)
  (2,2) circle (9pt) (3,2) circle (9pt) (0,3) circle (9pt)
  (1,3) circle (9pt) (0,4) circle (9pt);
\fill[thick, color=red]
  (-2,2) circle (9pt) (-1,2) circle (9pt) (0,2) circle (9pt)
  (-1,3) circle (9pt) (-2,3) circle (9pt) (-3,3) circle (9pt)
  (-1,4) circle (9pt) (-2,4) circle (9pt);
\draw[thick, color=black]
   (1,-5) node{(e)};
\end{tikzpicture}
\caption{Filling $\Pic(3,3;1,1)$ from $\cl$. The green point 
marks $\cl_0=0$. The shaded area in (a) is the $W$-{\wbox}
centered at $[y=0]\cap\cl=\{0\}$.}
\label{fig:InfecExS1}
\end{figure}
\end{example}

\begin{theorem}[Theorem {\thFull}, twisted version]
\label{thm:FullnessTheorem}
On $(\ell_1,\ell_2;c)$, $c\not=0$, any maximal exceptional 
sequence is full. 
\end{theorem}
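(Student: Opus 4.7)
The plan is to mimic the argument for the product case (Theorem~\ref{thm:FullnessTheoremPaPb}), replacing the vertical Beilinson sequence with the $\kV$-sequence of Corollary~\ref{coro:resol}. By Theorem~{\thLex} we may assume $\cl$ is vertically ordered, and by shifting, that $\cl_0 = 0$. Theorems~{\thHeight} and~{\thClass} then describe $\cl$ via an admissible set $X$ and a complementing partner $Y$; set $\nl := 2\ell_2 - H(\cl)$. If $\cl$ is trivial (i.e.\ $\nl = \ell_2$ and $X = \varnothing$), it consists of $\ell_2$ horizontal chains at levels $0, \ldots, \ell_2 - 1$, so filling each via horizontal Beilinson directly exhibits a translate of $\Rect_{\ell_1,\ell_2}$ inside $\langle\cl\rangle$, and fullness follows from the preamble of Section~\ref{sec:GenDerCat}.

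Assume now $\nl < \ell_2$. First, use the $\nl$ horizontal chains at levels $\ell_2 - \nl, \ldots, \ell_2 - 1$ supplied by Corollary~\ref{coro:ExHorChains} together with horizontal Beilinson to fill those lines inside $\langle\cl\rangle$. Then I would prove by induction on $i = 0, 1, \ldots, \ell_2 - \nl - 1$ that $[y = i + \ell_2] \subseteq \langle\cl\rangle$. For the inductive step, fix $k \in \{0, \ldots, r_i\}$; the claim is that the $W$-{\wbox} based at $(a_i + k, i) \in \cl$ lies in $\langle\cl\rangle$. Its points at levels $i+1, \ldots, \ell_2 - \nl - 1$ lie in $\cl$ thanks to the inequalities $a_j \leq a_i - (j-i)\xa$ and $a_j + r_j \geq a_i + r_i$ from Corollary~\ref{coro:ExtWBox}; its points at levels $\ell_2 - \nl, \ldots, \ell_2 - 1$ lie in $\langle\cl\rangle$ by the preliminary filling step; and its points at levels $\ell_2, \ldots, i + \ell_2 - 1$ lie in $\langle\cl\rangle$ by the inductive hypothesis. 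Since $V \subseteq W$ by Lemma~\ref{lem:comparisonVW}, the $\kV$-sequence then generates the point $(a_i + k - \xb, i + \ell_2)$ in $\langle\cl\rangle$. Varying $k$ and combining with
$$
X_{i + \ell_2} = \{(a_i + r_i + 1 - \xb, i + \ell_2), \ldots, (a_i + \ell_1 - 1 - \xb, i + \ell_2)\} \subseteq \cl
$$
produces $\ell_1$ consecutive points at level $i + \ell_2$, a horizontal chain whose line is filled by horizontal Beilinson.

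After the induction, the $\ell_2$ consecutive horizontal lines $[y = \ell_2 - \nl], \ldots, [y = 2\ell_2 - \nl - 1]$ all lie in $\langle\cl\rangle$ and therefore contain a translate of $\Rect_{\ell_1,\ell_2}$; the preamble of Section~\ref{sec:GenDerCat} then yields fullness. The principal difficulty lies in the inductive step, which requires careful range bookkeeping to check that every point of the $W$-{\wbox} at $(a_i + k, i)$ actually belongs to $\langle\cl\rangle$ at each relevant level, and that the $\kV$-generated points dovetail exactly with $X_{i + \ell_2}$ to produce a complete horizontal chain without gap or overlap. The tightness of the structural inequalities supplied by Theorem~{\thClass} and Corollary~\ref{coro:ExtWBox} is precisely what makes this fit work.
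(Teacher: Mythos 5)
Your proposal is correct and follows essentially the same route as the paper's own proof: fill the lines carrying the $\nl$ horizontal chains, then inductively show that the $W$-\wbox s based at $[y=i]\cap\cl$ lie in $\langle\cl\rangle$ (via Corollary~\ref{coro:ExtWBox} and the already-filled lines), apply the $\kV$-sequence to generate $(-\xb,\ell_2)+([y=i]\cap\cl)$, and dovetail these points with $X_{i+\ell_2}$ to fill $[y=i+\ell_2]$. Your write-up merely makes the per-point bookkeeping of the inductive step slightly more explicit than the paper does.
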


\begin{proof}
We may assume that $\cl$ is vertically ordered and 
starts at the origin. We continue to use the notation from
Corollary~\ref{coro:ExtWBox}.

\medskip

First, we fill all lines containing a horizontal chain in $\cl$
(if any). Consequently, the lines $[y=\ell_2-1],\ldots,[y=\ell_2-\nl ]$ 
belong to $\langle\cl\rangle$. From 
Equation~\eqref{eq:ExtWbox} on Page~\pageref{eq:ExtWbox}
we conclude that $\langle\cl\rangle$ contains all the $W$-boxes 
centered at the points in $[y=0]\cap\cl$. 
Hence we can generate the points $(-\xb,\ell_2)+([y=0]\cap\cl)$ 
and fill the line $[y=\ell_2]$ which therefore also belongs
to $\langle\cl\rangle$. We can again appeal to 
Equation~\eqref{eq:ExtWbox} to infer that the $W$-boxes based at
$[y=1]\cap\cl$ are contained in $\langle\cl\rangle$. Thus
we can generate the points $(-\xb,\ell_2)+([y=1]\cap\cl)$ 
and fill the line $[y=\ell_2+1]$. After at most $\ell_2-\nl$ 
repetitions we conclude that $(0,\ell_2-\nl)+\Rect_{\ell_1,\ell_2}
\subseteq\langle\cl\rangle$. Hence $\cl$ is full. 
\end{proof}

\bigskip

{\em Acknowledgement}. We would like to thank 
Lutz Hille for stimulating discussions.
Special thanks go to Martin Altmann for his detailed 
comments which considerably improved the paper.
{Last but not least, we are very grateful for the 
comprehensive review of our paper by the anonymous referee.
In particular, it led to a far more elegant proof 
of Theorem~\ref{thm:FullnessTheoremPaPb}.}

\bibliographystyle{alpha}
\bibliography{fesFano}
\end{document}